\numberwithin{equation}{subsection}
\newcounter{keepeqno}
\newenvironment{num}{\setcounter{keepeqno}{\value{equation}}
\begin{list}
{(\theequation)}{\usecounter{equation}}%
\setcounter{equation}{\value{keepeqno}}}{\end{list}}
\newcommand{\SF}{{\mathcal{SF}}}
\newcommand{\BC}{{\mathbb {C}}}
\newcommand{\BG}{{\mathbb {G}}}
\newcommand{\BN}{{\mathbb {N}}}
\newcommand{\BR}{{\mathbb {R}}}
\newcommand{\BZ}{{\mathbb {Z}}}
\newcommand{\CE}{{\mathcal {E}}}
\newcommand{\CL}{{\mathcal {L}}}
\newcommand{\CN}{{\mathcal {N}}}
\newcommand{\CS}{{\mathcal {S}}}
\newcommand{\CU}{{\mathcal {U}}}
\newcommand{\CW}{{\mathcal {W}}}
\newcommand{\CX}{{\mathcal {X}}}
\newcommand{\CZ}{{\mathcal {Z}}}
\newcommand{\Fg}{{\mathfrak {g}}}
\newcommand{\Fw}{{\mathfrak {w}}}
\newcommand{\RI}{{\mathrm {I}}}
\newcommand{\RM}{{\mathrm {M}}}
\newcommand{\RO}{{\mathrm {O}}}
\newcommand{\Ad}{{\mathrm{Ad}}}
\newcommand{\GL}{{\mathrm{GL}}}
\newcommand{\Hom}{{\mathrm{Hom}}}
\renewcommand{\Im}{{\mathrm{Im}}}
\newcommand{\Ind}{{\mathrm{Ind}}}
\renewcommand{\Re}{{\mathrm{Re}}}
\newcommand{\SL}{{\mathrm{SL}}}
\newcommand{\SO}{{\mathrm{SO}}}
\newcommand{\Sym}{{\mathrm{Sym}}}
\newcommand{\sgn}{{\mathrm{sgn}}}
\newcommand{\udl}{\underline}
\newcommand{\wh}{\widehat}
\newcommand{\bs}{\backslash}
\def\std{\rm std}
\def\Irr{\mathrm{Irr}}
\def\ss{\mathrm{ss}}
\def\temp{\mathrm{temp}}
\def\Vogan{\mathrm{Vogan}}
\def\rel{\mathrm{rel}}
\newtheorem{thm}{Theorem}[subsection]
\newtheorem{defin}[thm]{Definition}
\newtheorem{pro}[thm]{Proposition}
\newtheorem{lem}[thm]{Lemma}
\newtheorem{cor}[thm]{Corollary}
\newtheorem{conjec}{Conjecture}
\newcommand{\Rmnum}[1]{\expandafter\@slowromancap\romannumeral #1@}
\begin{document}

\title[local GGP over Archimedean fields]{The local Gross-Prasad conjecture over Archimedean local fields}

\author{Cheng Chen}
\address{Institut de Math\'ematiques de Jussieu–Paris Rive Gauche, CNRS, Paris, France}
\email{cheng.chen@imj-prg.fr}
\keywords{local Gross-Prasad conjecture, branching problem, multiplicity formula}
\subjclass[2020]{Primary 22E50 22E45; Secondary 20G20}
\maketitle
\begin{abstract}
Following the approach of C. M\oe glin and J.-L. Waldspurger in \cite{moeglin2012conjecture}, this article proves the local Gross-Prasad conjecture over $\BR$ and $\BC$ based on the tempered cases in \cite{luothesis}\cite{chen2022local}.
\end{abstract}
\tableofcontents
\section{Introduction}
In \cite{gross1992decomposition}\cite{gross1994irreducible}, 
B. Gross and D. Prasad formulated a conjecture   on the local multiplicity for Bessel models of special orthogonal groups over a local field of characteristic $0$, know as the \textit{local Gross-Prasad conjecture}. When the local field is non-Archimedean, the conjecture was proved in \cite{moeglin2012conjecture} based on the tempered cases proved in \cite{waldspurger2010formule}\cite{waldspurger2012calcul}\cite{waldspurger2012conjecture}\cite{waldspurger2012formule}\cite{waldspurger2012variante}. This paper proves the local Gross-Prasad conjecture over Archimedean local fields. In particular, the proof over the real field follows M\oe glin and Waldspurger's approach and is based on the tempered cases proved in \cite{luothesis}\cite{chen2022local}.

There are some recent applications of the local Gross-Prasad conjecture. The paper \cite{jiang2020arthur} takes it as an input to prove one direction of the global Gross-Prasad conjecture, and the paper \cite{jiang2022arithmetic} uses the local Gross-Prasad conjecture to develop the theory of arithmetic wavefront sets for irreducible admissible representations of classical groups. We refer to the ICM report of R. Beuzart-Plessis \cite{beuzartplessis} for a general discussion of the significance of the local Gross-Prasad conjecture in arithmetics. 

The local Gross-Prasad conjecture is set up as following: Let $F$ be a local field of characteristic $0$, and $(W, V)$ be a pair of non-degenerate quadratic spaces over $F$ such that the orthogonal complement $W^{\perp}$ of $W$ in $V$ is odd-dimensional and split over $F$. We let $\mathsf G$ be the algebraic group $\mathsf{SO}(W)\times \mathsf{SO}(V)$ over $F$ and take its subgroup $\mathsf H=
\Delta\mathsf{SO}(W)\ltimes \mathsf N$, where $\Delta \mathsf{SO}(W)$ is the image of the diagonal embedding $\mathsf{SO}(W)\hookrightarrow \mathsf{SO}(W)\times\mathsf{SO}(V)$ and $\mathsf N$ is the unipotent part of a parabolic subgroup stabilizing a full totally isotropic flag on $W^{\perp}$. We fix a generic character $\xi_N$ of $N=\mathsf N(F)$ that uniquely extends to a character $\xi$ of $H=\mathsf H(F)$.
For every irreducible admissible representation $\pi$ of $G=\mathsf G(F)$ (we require the representation to be Casselman-Wallach when $F$ is Archimedean), we define the multiplicity
\[
m(\pi):=\dim\Hom_{H}(\pi|_{H},\xi).
\]
It was proved in  \cite{aizenbud2010multiplicity}\cite{gan2012symplectic}\cite{waldspurger2012variante} over non-Archimedean fields and in \cite{sun2012multiplicity}\cite{jiang2010uniqueness} over Archimedean fields that
\[
m(\pi)\leqslant 1.
\]
This result is known as the \textit{multiplicity-one theorem}. The local Gross-Prasad conjecture is a refinement of the multiplicity-one theorem that takes representations of pure inner forms of $\mathsf G$ into consideration.

For every $\alpha\in H^1(F, \mathsf H)\hookrightarrow H^1(F,\mathsf G)$, the inner twists of $\mathsf G,\mathsf H$ by $\alpha$ give pure inner forms $\mathsf{G}_{\alpha},\mathsf H_{\alpha}$, respectively. Then $\mathsf G_{\alpha}=\mathsf{SO}(W_{\alpha})\times \mathsf{SO}(V_{\alpha})$ and $\mathsf H_{\alpha}=\Delta\mathsf{SO}(W_{\alpha})\ltimes \mathsf N$, where $W_{\alpha}$ is the inner twist of $W$ by $\alpha\in H^1(F, \mathsf H)=H^1(F,\mathsf{SO}(W_{\alpha}))$ and $V_{\alpha}=W_{\alpha}\perp S$. Let $\xi_{\alpha}$ be the character of $H_{\alpha}=\mathsf H_{\alpha}(F)$ obtained by the extension of $\xi_N$. 
For every irreducible admissible representation $\pi$  of $G_{\alpha}=\mathsf G_{\alpha}(F)$ (we require the representation to be Casselman-Wallach when $F$ is Archimedean), we extend the definition of multiplicity by setting
\[
m(\pi):=\dim\Hom_{H_{\alpha}}(\pi|_{H_{\alpha}},\xi_{\alpha}).
\]

 For every local $L$-parameter $\phi:\CW_F\to {}^L\mathsf G$, we denote by $\Pi_{F,\phi}(\mathsf G)$ the corresponding $L$-packet, which consists of finitely many irreducible admissible representations of $\mathsf{G}(F)$, which are Casselman-Wallach when $F$ is Archimedean. For every $\alpha\in H^1(F,\mathsf G)$. the Langlands dual group ${}^L \mathsf G_{\alpha}$ of $\mathsf G_{\alpha}$ is isomorphic to that of $\mathsf G$, so $\phi$ also represents a local $L$-parameter of $\mathsf G_{\alpha}$. Following the work of D. Vogan (\cite{vogan1993local}), we can define the Vogan $L$-packet associated to $\phi$ as
\[
\Pi_{F,\phi}^{\Vogan}:=\bigsqcup_{\alpha\in H^1(F,\mathsf G)}\Pi_{\phi}(\mathsf G_{\alpha}).
\]
The $L$-parameter $\phi$ is called \textit{tempered} if $\Im(\phi)$ is bounded. The $L$-parameter $\phi$ is called \textit{generic} if there is a generic representation in $\Pi_{F,\phi}^{\Vogan}$. In particular, tempered parameters are generic.

When $\phi$ is generic, it was conjectured by Vogan and known over Archimedean local fields (\cite[Theorem  6.3]{vogan1993local}), that, fixing a Whittaker datum of $\{\mathsf G_{\alpha}\}_{\alpha\in H^1(F, \mathsf G)}$, there is a bijection 
\[
\pi\in\Pi^{\Vogan}_{F,\phi}\longleftrightarrow \eta_{\pi}\in\widehat{\CS_{\phi}}.
\]
Here $\widehat{\CS_{\phi}}$ is the set of (complex) characters of  component group 
\[
\CS_{\phi}:=\pi_0(\mathrm{Cent}_{\wh{G}}(\Im(\phi))),
\] where $\mathrm{Cent}_{\wh{G}}(\Im(\phi))$ is the centralizer of the image $\Im(\phi)$ in the dual group $\wh{\mathsf G}$. Gross and Prasad suggested that one may consider the \textit{relevant Vogan packet}, defined as
\[
\Pi^{\Vogan}_{F,\phi,\rel}:=\bigsqcup_{\alpha\in H^1(F,\mathsf H)}\Pi_{F,\phi}(\mathsf G_{\alpha})\subset \Pi^{\Vogan}_{F,\phi}.
\]
In particular, the multiplicity $m(\pi)$ is well-defined for representations in $\Pi_{F,\phi,\rel}^{\Vogan}$.

\begin{conjec}[\cite{gross1992decomposition}\cite{gross1994irreducible}]\label{conj: GP in introduction}
With the notions above, the following two statements hold.
\begin{enumerate}
    \item (Multiplicity-one) For every generic parameter $\phi$ of $\mathsf{G}$, we have
    \[
    \sum_{\pi\in \Pi^{\Vogan}_{F,\phi,\rel}}m(\pi)=1.
    \] 
    This implies that there is an unique representation $\pi\in \Pi_{F,\phi,\rel}^{\Vogan}$ such that $m(\pi)=1$.
    \item (Epsilon-dichotomy) Fix the Whittaker datum of $\{\mathsf G_{\alpha}\}_{\alpha\in H^1(F,\mathsf G)}$ as \cite[(6.3)]{gross1994irreducible}. The unique representation $\pi\in \Pi^{\Vogan}_{F,\phi,\rel}$ such that $m(\pi)=1$ can be characterized as
    \[
    \eta_{\pi}=\eta_{\phi},
    \]
    where $\eta_{\phi}$ is defined in (\ref{equ: definition of character}).
\end{enumerate}
\end{conjec}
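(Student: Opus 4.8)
The plan is to follow the M\oe glin--Waldspurger strategy: reduce the general generic case to the tempered case, invoke the tempered Gross--Prasad identity (which is the base of the induction, available from earlier work), and check the compatibility of the epsilon-factor bookkeeping under this reduction. The archimedean input replacing the $p$-adic use of Jacquet functors will be the theory of Schwartz homology.

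\textbf{Step 1 (Reduction to tempered parameters).} By the Langlands classification, a generic parameter $\varphi = (\varphi_W, \varphi_V)$ decomposes, on each factor, as (a sum of essentially tempered $\GL$-parameters in the open positive Weyl chamber) $\oplus$ (a tempered parameter) $\oplus$ (its contragredient); correspondingly every $\pi \in \Pi^{\Vogan}_{\varphi}$ is the Langlands quotient of a standard module $\Ind_{P}^{G_{\alpha}}(\sigma \otimes \tau)$, where $\tau$ lies in the tempered Vogan packet $\Pi^{\Vogan}_{\varphi'}$ of a smaller pair of special orthogonal groups and $\sigma$ is a representation of a product of general linear groups. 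One first shows that $m$ of the standard module agrees with $m$ of its Langlands quotient, and then that $m\bigl(\Ind_{P}^{G_{\alpha}}(\sigma\otimes\tau)\bigr) = m(\tau)$ for the smaller Bessel problem. This is the Gan--Gross--Prasad reduction: $H$ acts on $G_{\alpha}/P$ with finitely many orbits, and the multiplicity is controlled by the Bruhat filtration of the restriction of the induced representation to $H$.

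\textbf{Step 2 (Schwartz homology and orbit contributions).} For the open $H$-orbit, a Shapiro-type isomorphism identifies the contribution with $\Hom_{H'}(\tau, \xi')$ tensored with a Rankin--Selberg functional on the $\GL$-factor, the latter being one-dimensional because $\sigma$ sits in the open chamber. The crux is to show that every non-open orbit contributes nothing. Here I would replace $\Hom_H(\,\cdot\,,\xi)$ by the Schwartz homology groups $\CH^{H}_{*}(\,\cdot\,,\xi)$ and prove both that the twisted co-invariants $\CH^{H}_{0}$ of the non-open strata vanish and, crucially, that the higher $\CH^{H}_{i}$ vanish, so that no differential in the spectral sequence of the Bruhat filtration can revive a contribution to $\Hom_H(\pi,\xi)$. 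These vanishing statements follow from the positivity of the exponents of $\sigma$ together with the genericity of $\tau$, in exact parallel with M\oe glin--Waldspurger's non-archimedean computation, with Schwartz homology and its long exact sequences playing the role of the exactness of the Jacquet functor. Summing over $\alpha \in H^1(F, G)$ yields
\[
\sum_{\pi\in\Pi^{\Vogan}_{\varphi,\rel}} m(\pi) \;=\; \sum_{\tau\in\Pi^{\Vogan}_{\varphi',\rel}} m(\tau),
\]
and the right-hand side is $1$ by the tempered case; this is part (1).

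\textbf{Step 3 (Epsilon-dichotomy).} The reduction also pins down the unique $\pi$ with $m(\pi) = 1$ as $\Ind_P^{G_\alpha}(\sigma \otimes \tau_0)$, where $\tau_0$ is the unique member of $\Pi^{\Vogan}_{\varphi',\rel}$ with $m(\tau_0) = 1$; by the tempered case $\chi_{\tau_0} = \chi_{\varphi'}$. Since the summands $\sigma, \sigma^{\vee}$ do not enlarge the centralizer, $\CS_{\varphi} \cong \CS_{\varphi'}$, and it remains to check $\chi_{\varphi}|_{\CS_{\varphi'}} = \chi_{\varphi'}$ together with the compatibility of the Vogan parametrization with parabolic induction over $\BR$ and $\BC$. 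The first is an identity among local epsilon factors: expanding the defining factors $\varepsilon(\tfrac12,\varphi_{W,i}\otimes\varphi_V)$ of $\chi_{\varphi}$ produces terms $\varepsilon(\tfrac12,\varphi_{W,i}\otimes\sigma)\,\varepsilon(\tfrac12,\varphi_{W,i}\otimes\sigma^{\vee})$, each equal to $1$ since $\varepsilon(\tfrac12,\rho)\,\varepsilon(\tfrac12,\rho^{\vee})$ is trivial for the representations $\rho$ occurring here; hence only the tempered part survives, $\chi_{\varphi}$ restricts to $\chi_{\varphi'}$, and $\chi_\pi = \chi_\varphi$.

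\textbf{Main obstacle.} The heart of the argument is Step 2: the higher Schwartz-homology vanishing for all non-open $H$-orbits. This demands a precise description of the orbits of $H = \Delta\SO(W)\ltimes N$ on $G_\alpha/P$ and of their stabilizers, the correct Schwartz-induction isomorphisms on each stratum, and uniform vanishing of the relevant $\mathrm{Ext}$-groups in the category of Casselman--Wallach representations --- the archimedean features (non-exactness of restriction, analytic delicacy of smooth induction) having no analogue in M\oe glin--Waldspurger's original argument. A secondary difficulty, if the tempered case is used as a black box rather than reproved, is that the tempered archimedean Gross--Prasad identity itself rests on a delicate local relative trace formula.
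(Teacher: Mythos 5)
Your overall strategy for the real case is the same as the paper's (M\oe glin--Waldspurger reduction to the tempered case, with Schwartz homology replacing Jacquet-functor exactness), and your Step 3 matches the paper's epsilon-factor bookkeeping. But there are two concrete gaps. First, in Step 1 you propose to show that $m$ of the standard module agrees with $m$ of its Langlands quotient. That comparison is not how the reduction works and is not something you can prove by the methods described; the correct statement, and the one the paper establishes (Lemma \ref{lem: irreducible criterion} via the Speh--Vogan irreducibility criterion, yielding Proposition \ref{pro: real ind}), is that for a \emph{generic} parameter the standard module $\sigma_V\rtimes\pi_{V_0}$ is already irreducible, so every member of $\Pi^{\Vogan}_{\varphi}$ \emph{is} a full parabolic induction from a tempered datum. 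Without this irreducibility you cannot even set up the induction on exponents, because the intermediate representations appearing in the reduction must remain inductions of the required shape.

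Second, and more seriously, your Step 2 asserts that the vanishing of all higher Schwartz homology of the non-open strata ``follows from the positivity of the exponents of $\sigma$ together with the genericity of $\tau$, in exact parallel'' with the non-archimedean computation. This is precisely the point where the parallel breaks down: the closed strata carry infinite decreasing filtrations by symmetric powers of the conormal bundle, and the needed vanishing for each graded piece only holds under specific numerical inequalities on the exponents, not for the actual inducing data in general. The paper does not prove your vanishing statement. Instead it (i) reduces to the codimension-one case using an \emph{auxiliary} spherical principal series with parameters in general position, where an infinitesimal-character argument gives vanishing (Lemma \ref{lem: reduction to codim 1}, Theorem \ref{thm: vanishing 1}); (ii) proves the inequality $m(\pi_V\boxtimes\pi_W)\leqslant m(\pi_{V_0}\boxtimes\pi_{W_0})$ using only degree-zero left-exactness of $\Hom$ together with a $\Hom$-vanishing theorem valid when $\Re(s)>\mathrm{LI}(\pi_V)$ (Theorem \ref{thm: CCZ}), after ordering the exponents and possibly switching $V$ and $W$; and (iii) proves the reverse inequality not by homology vanishing but by an explicit construction of an $H^+$-invariant functional via analytic continuation (Proposition \ref{pro: construction}, resting on \cite{gourevitch2019analytic}), supplemented by a restricted Schwartz-homology identity for discrete-series blocks under a leading-coefficient hypothesis (Section \ref{section: expedient}). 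The general higher-vanishing you invoke is explicitly deferred in the paper to future work, so as written your Step 2 does not close. Finally, note that over $\BC$ the paper does not run this reduction at all: it proves $m(\pi)\geqslant 1$ for all principal series directly by constructing an equivariant tempered distribution on the open $B\times H$ double coset, which sidesteps the tempered input entirely.
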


When $F$ is non-Archimedean and $\phi$ is tempered,  Waldspurger proved the conjecture in \cite{waldspurger2010formule}\cite{waldspurger2012calcul}\cite{waldspurger2012conjecture}\cite{waldspurger2012formule}\cite{waldspurger2012variante}. M\oe glin and Waldspurger completed the proof of  Conjecture \ref{conj: GP in introduction} for generic parameters based on the results in the tempered cases.

When $F=\BR$ and the parameter $\phi$ is tempered, Z. Luo proved the multiplicity-one part of  Conjecture \ref{conj: GP in introduction} in \cite{luothesis} following the work of R. Beuzart-Plessis in \cite{beuzart2019local}. The author and Luo proved the epsilon-dichotomy part of Conjecture \ref{conj: GP in introduction} in \cite{chen2022local} by a simplification of Waldspurger's approach.

The main result of the paper is the following.
\begin{thm}
When $F=\BR$ or $\BC$, Conjecture \ref{conj: GP in introduction} holds for generic parameters. 
\end{thm}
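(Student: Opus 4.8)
The plan is to follow the method of M\oe glin and Waldspurger, reducing Conjecture~\ref{conj: GP in introduction} for a generic parameter $\varphi$ to its tempered case. Up to the duality $\rho\mapsto\rho^{\vee}$ on the $L$-parameters of general linear groups, $\varphi$ decomposes as $\varphi=\varphi_{+}\oplus\varphi_{t}\oplus\varphi_{+}^{\vee}$, where $\varphi_{t}$ is tempered --- a parameter for a lower-rank Gross--Prasad triple $(G',H',\xi')$ --- and $\varphi_{+}=\bigoplus_{i}\rho_{i}|\cdot|^{s_{i}}$ with all $s_{i}>0$. Correspondingly, each member of the Vogan packet $\Pi^{\Vogan}_{\varphi}$ is the Langlands quotient $\pi$ of a standard module $\Ind_{P_{\alpha}}^{G_{\alpha}}(\Delta\otimes\sigma)$, where $\Delta$ is the essentially tempered representation of a general linear Levi factor attached to $\varphi_{+}$ and $\sigma$ runs over the tempered Vogan packet $\Pi^{\Vogan}_{\varphi_{t}}$ of the smaller triple. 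I would then prove two things: first, that $m(\pi)=m(\sigma)$ for the member $\sigma$ matched to $\pi$ under Vogan's bijection; and second, that, with the Whittaker normalization of \cite[(6.3)]{gross1994irreducible}, the character $\chi_{\pi}$ is the pullback of $\chi_{\sigma}$ along the natural map $\CS_{\varphi_{t}}\to\CS_{\varphi}$ while the distinguished character $\chi_{\varphi}$ of (\ref{equ: definition of character}) restricts to $\chi_{\varphi_{t}}$. Granting these, the conjecture for $\varphi$ follows from the tempered case, which over $\BR$ is provided by the work of Luo for part~(1) and of the author and Luo for part~(2), and which over $\BC$ I would obtain by running the same local relative trace formula and character identity arguments --- lighter there because the tempered representations of complex classical groups are irreducibly induced from unitary characters of maximal tori.

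To establish the multiplicity identity $m(\pi)=m(\sigma)$, the first step is to compute the $(H_{\alpha},\xi_{\alpha})$-multiplicity of the standard module itself. Decomposing the double coset space $H_{\alpha}\backslash G_{\alpha}/P_{\alpha}$ into its finitely many locally closed $H_{\alpha}$-orbits and filtering $\Ind_{P_{\alpha}}^{G_{\alpha}}(\Delta\otimes\sigma)$ accordingly produces, through the Schwartz homology of the individual strata, a spectral sequence converging to $\mathrm{Ext}^{\bullet}_{H_{\alpha}}(\Ind_{P_{\alpha}}^{G_{\alpha}}(\Delta\otimes\sigma),\xi_{\alpha})$. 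The open orbit contributes, in degree $0$, the space of functionals that factor as the product of a local Rankin--Selberg functional on the general linear factor --- one-dimensional in the relevant range by the archimedean theory of Jacquet--Piatetski-Shapiro--Shalika --- with a Bessel functional on $\sigma$ for $(G',H',\xi')$, hence a space of dimension $m(\sigma)$. One then shows that no non-open orbit contributes in degree $0$ and --- the crucial input --- that the higher Schwartz-homology groups which would obstruct the passage to the Langlands quotient all vanish, using the positivity of the $s_{i}$ exactly as in the non-archimedean argument. Together these give $\Hom_{H_{\alpha}}(\pi,\xi_{\alpha})=\Hom_{H_{\alpha}}(\Ind_{P_{\alpha}}^{G_{\alpha}}(\Delta\otimes\sigma),\xi_{\alpha})=\Hom_{H'_{\alpha}}(\sigma,\xi'_{\alpha})$, that is, $m(\pi)=m(\sigma)$; summing over the relevant packet and invoking the tempered multiplicity-one statement yields Conjecture~\ref{conj: GP in introduction}(1).

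For the character identity I would transcribe the M\oe glin--Waldspurger bookkeeping: the compatibility of the Langlands classification with Vogan's bijection (for the fixed Whittaker data) identifies $\chi_{\pi}$ with the pullback of $\chi_{\sigma}$, and a direct computation with the local root numbers $\epsilon(\tfrac{1}{2},\rho_{i}\otimes\varphi_{t})$ together with the $\rho\mapsto\rho^{\vee}$ symmetry of $\varphi_{+}$ shows that $\chi_{\varphi}$ restricts to $\chi_{\varphi_{t}}$ on $\CS_{\varphi_{t}}$. Hence the unique distinguished member of $\Pi^{\Vogan}_{\varphi,\rel}$ is precisely the Langlands quotient built from the unique distinguished member of $\Pi^{\Vogan}_{\varphi_{t},\rel}$, which is the epsilon-dichotomy. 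When $F=\BC$, every nondegenerate quadratic space is split, so $H^{1}(\BC,H)$ is trivial, the Vogan packet reduces to the $L$-packet of the single quasi-split group $G$, and all of the above applies verbatim once the tempered case over $\BC$ is in hand.

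I expect the main obstacle to be the homological algebra behind this reduction. Over a $p$-adic field one works with smooth representations, where parabolic induction and restriction are exact and the Bruhat filtration is handled by an elementary d\'evissage; over $\BR$ and $\BC$ one must instead work with Casselman--Wallach representations and the Schwartz homology formalism on the orbit strata. This forces one to verify Hausdorffness and closed-range properties for the relevant coinvariant spaces, to control the convergence of the orbit spectral sequence, and --- most importantly --- to prove the vanishing of the higher Schwartz-homology groups needed both to discard the closed-orbit constituents and to descend $\Hom_{H_{\alpha}}$ from the standard module to its Langlands quotient. Establishing these vanishing statements, and pinning down the precise one-dimensionality and non-vanishing of the archimedean general linear period in the required range of exponents, is where the real work lies; the remainder of the argument is a faithful adaptation of the non-archimedean proof.
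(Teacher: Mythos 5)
Your overall strategy for $F=\BR$ --- decompose $\varphi=\varphi_+\oplus\varphi_t\oplus\varphi_+^{\vee}$, identify packet members with (quotients of) induced modules, prove a multiplicity identity $m(\pi)=m(\sigma)$ by orbit analysis and Schwartz homology, and check the character bookkeeping --- is the M\oe glin--Waldspurger reduction that the paper also follows. But there are two concrete soft spots. First, you work with the \emph{Langlands quotient} of the standard module and propose to ``descend'' $\Hom_{H_\alpha}$ from the standard module to its quotient via vanishing of higher Schwartz homology. That mechanism does not work: to transfer a functional from $\Ind_{P_\alpha}^{G_\alpha}(\Delta\otimes\sigma)$ to its quotient $\pi$ you need the functional to kill the kernel of the quotient map, which is a degree-zero statement about the other constituents, not a consequence of higher homology vanishing. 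The paper avoids this entirely by proving (via the Speh--Vogan irreducibility criterion together with the explicit genericity constraints on the exponents, Lemma \ref{lem: genericity} and Lemma \ref{lem: irreducible criterion}) that for generic parameters the induced module $\sigma_V\rtimes\pi_{V_0}$ is already irreducible, so the packet member \emph{is} the full induced representation (Proposition \ref{pro: real ind}). Some such irreducibility input is indispensable, and your proposal omits it. Second, the assertion that no non-open orbit contributes and that all obstructing homology vanishes ``exactly as in the non-archimedean argument'' is precisely the hard analytic content. The paper does not obtain it in one stroke: it first reduces to the codimension-one case by inducing an auxiliary spherical principal series in general position on the smaller group (Lemma \ref{lem: reduction to codim 1}, an infinitesimal-character argument), then proves the two inequalities of the multiplicity formula separately --- one by orbit analysis and the vanishing theorem of \cite{chen2022FJ}, the other by an explicit construction of the open-orbit functional via \cite[Proposition 4.9]{gourevitch2019analytic} supplemented by Xue's Schwartz-homology trick with the leading-coefficient ordering. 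Your plan acknowledges this is ``where the real work lies'' but supplies none of it.

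For $F=\BC$ your route is genuinely different from the paper's and rests on an unavailable input: you propose to establish the tempered case over $\BC$ by a local relative trace formula and then run the same reduction, but no such tempered result for complex special orthogonal groups exists in the literature, and the paper deliberately avoids needing one. Instead it uses the observation of \cite[\S 11]{gross1992decomposition} that over $\BC$ every relevant representation is a principal series $\RI_B^G(\sigma)$, and directly constructs a nonzero $(H,\xi)$-equivariant functional by extending the obvious equivariant tempered measure on the open double coset $B(F)\cdot H(F)$ (where $B\cap H=1$) to an equivariant distribution on $G(F)$ via \cite[Theorem B]{gourevitch2019analytic}. Your own remark that complex tempered representations are irreducibly induced from unitary characters actually points to this shortcut; invoking a trace formula there is both unnecessary and unproven.
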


The proof over $\BC$ is done by construction based on results in \cite{gourevitch2019analytic} and the proof over $\BR$ follows the strategy in \cite{moeglin2012conjecture}.  The proof consists of a structure theorem (Proposition \ref{pro: real ind}) for representations in generic packets and a multiplicity formula (Theorem \ref{thm: reductive}). With these results, we can reduce all situations of the conjecture into the tempered cases.

In Section \ref{section: classification}, we prove the structure theorem using the standard module conjecture. The proof of the multiplicity formula, however, is more intricate. Following \cite{moeglin2012conjecture}, this requires a formula for reduction to basic cases and two multiplicity formulas that establish inequalities needed to prove the basic cases.

In the basic case, one inequality of the multiplicity formula is proved using orbit analysis (Section \ref{section: multiplicity formula}). The proof of the other inequality is expected to be completed using harmonic analysis in Section \ref{section: harmonic analysis}. The formula for reduction to the basic cases, which is an equality, can be established by proving two inequalities in a manner similar to the inequalities in the basic case. The non-Archimedean counterpart is discussed in \cite[Section 2]{moeglin2012conjecture}, \cite[Sections 1.4–1.6]{moeglin2012conjecture}, and \cite[Sections 1.7–1.8]{moeglin2012conjecture}.

It is worth mentioning that there is a parallel conjecture for unitary groups formulated by W. Gan, Gross, and Prasad. Over non-Archimedean local fields, the conjecture for tempered parameters was treated by Beuzart-Plessis in  \cite{beuzart2014expression}\cite{beuzart2016conjecture}; Based on the tempered cases, Gan and A. Ichino proved the conjecture for generic parameters in \cite{gan2016gross}. Over Archimedean local fields,  Beuzart-Plessis proved the multiplicity-one part of the conjecture in \cite{beuzart2019local} for tempered parameters using local trace formula and endoscopy. Xue completed the proof for tempered cases in \cite{xue1bessel} using theta correspondence and proved the generic cases in \cite{xue2020bessel}.

Although it is not necessary for the proof for the local Gan-Gross-Prasad conjecture, the multiplicity formula (Theorem \ref{thm: reductive})  also works for reducible representations obtained from parabolic induction. This result can be applied to the study of local descents in my joint work with D. Jiang, D. Liu, L. Zhang \cite{chen2024arithmetic}.

\textit{Organization.} 
In Section \ref{section: GP conjecture}, we recall the statement of the local Gross-Prasad conjecture following \cite{gross1992decomposition}\cite{gross1994irreducible}. In Section \ref{section: complex}, we work over the complex field $\BC$. We follow the observation in \cite[\S 11]{gross1992decomposition} and prove the conjecture by constructing an explicit functional of the representation $\pi_V\boxtimes \pi_W$ using the results in \cite{gourevitch2019analytic}. 

In Sections \ref{section: classification}–\ref{section: real}, we work over the real field 
$\BR$. Section \ref{section: classification} provides a structure theorem for representations in generic packets, using a sufficient condition for irreducibility. In Section \ref{section: real}, we reduce the conjecture to the tempered cases by employing a multiplicity formula, following the approach in \cite{moeglin2012conjecture}.

For the basic case of the multiplicity formula, we prove one inequality using representation theory and orbit analysis (Section \ref{section: multiplicity formula}) and the other using harmonic analysis (Section \ref{section: harmonic analysis}). Additionally, in Sections \ref{section: multiplicity formula}–\ref{section: harmonic analysis}, we establish a formula that reduces the multiplicity to the basic cases.

\textbf{Acknowledgement.}  I would like to express my deepest gratitude to my advisor, Prof. Dihua Jiang, for encouraging me to explore this subject and for providing invaluable guidance throughout my research and the writing of this article. This work was partially supported by the National Science Foundation grants DMS-1901802 and DMS-2200890. Additionally, this project received funding from the European Union’s Horizon 2020 research and innovation programme under the Marie Skłodowska-Curie grant agreement No. 101034255.

I am grateful to Zhilin Luo for his patience and collaboration on the proof of the tempered cases in \cite{chen2022local}. I would also like to thank Fangyang Tian for insightful discussions on technical issues related to Casselman-Wallach representations. My thanks extend to Chen Wan for offering significant feedback that prompted the reorganization of Section \ref{section: complex}. I thank Rui Chen and Jialiang Zou for their collaboration on the Fourier-Jacobi case, which was instrumental in developing Theorem \ref{thm: CCZ}. I also wish to thank Binyong Sun for facilitating my visit to Zhejiang University in 2024 and for engaging in helpful discussions during my time there.

Finally, I am sincerely grateful to the anonymous referees for their constructive suggestions, which greatly improved this paper.

\section{Local Gross-Prasad Conjecture}\label{section: GP conjecture}
In this section, we review the local Gross-Prasad conjecture over Archimedean local fields following \cite{gross1992decomposition} and \cite{gross1994irreducible}.
\subsection{Gross-Prasad triples}\label{section: Gross-Prasad triple}

Let $F=\BR$ or $\BC$ and $(W, V)$ be a pair of non-degenerate quadratic spaces over $F$. The pair $(W, V)$ is called \textit{relevant} if and only if there exists an anisotropic line $D$ and a non-degenerate even-dimensional split quadratic space $Z$  over $F$ such that
\[
V= W{\perp} D\perp Z.
\]

We denote $r=\frac{\dim Z}{2}$. There exists a basis $\{z_{i}\}_{i=\pm 1}^{\pm r}$ of $Z$ such that
\[
q(z_i,z_j)=\delta_{i,-j}, \quad\forall  i,j\in \{\pm 1,\dots,\pm r\},
\]
where $q$ is the quadratic form on $V$. We denote by $\mathsf P_V$  the parabolic subgroup of the special orthogonal group $\mathsf{SO}(V)$ stabilizing the  totally isotropic flag 
\begin{equation}\label{equ: totally isotropic flag}
\langle z_r\rangle\subset \langle z_r,z_{r-1}\rangle\subset \cdots \subset \langle z_r,\dots,z_1\rangle.
\end{equation}
We take $\mathsf P_V=\mathsf M_V\cdot \mathsf N$ to be its Levi decomposition. In particular, the Levi subgroup  $\mathsf M_V\simeq \mathsf{SO}(W\oplus D)\times \GL_1^r$.

Let $\mathsf G=\mathsf{SO}(W)\times \mathsf{SO}(V)$. We identify $\mathsf N$ as a subgroup of $\mathsf G$ via the embedding $\mathsf{SO}(V)\hookrightarrow 1\times \mathsf{SO}(V)$. We set $\Delta\mathsf{SO}(W)$ as the image of the diagonal embedding $\mathsf{SO}(W)\hookrightarrow \mathsf G$. Then $\Delta\mathsf{SO}(W)$ acts on $\mathsf N$ by adjoint action of $\mathsf{SO}(W)\subset \mathsf M_V$ and we set
\[
\mathsf H=\Delta\mathsf{SO}(W)\ltimes \mathsf{N}.
\]

We  define a morphism $\lambda:\mathsf{N}\to \BG_a$ by
\[
\lambda(n)=\sum_{i=0}^{r-1}q(z_{-i-1},nz_i),\quad n\in \mathsf{N}.
\]
Then $\lambda$ is $\Delta\SO(W)$-conjugation invariant and hence $\lambda$ admits an unique extension to $\mathsf H$ that is trivial on $\Delta\mathsf{SO}(W)$. We still denote this character by $\lambda$. Let $\lambda_{F}: \mathsf H(F)\to F$ be the induced morphism on $F$-rational points. We define an unitary character of $H=\mathsf H(F)$ by
\[
\xi(h)=\lambda_F(h),\quad h\in H,
\]
where $\psi$ is a fixed additive (unitary) character $\psi$ of $F$.

Then the triple $(\mathsf G, \mathsf H,\xi)$ is called the \textit{Gross-Prasad triple} associated with the relevant pair $(W,V)$.
\subsection{Vogan $L$-packets}
In this subsection, we are going to recall the notion of Vogan $L$-packets for special orthogonal groups over Archimedean local fields following \cite{vogan1993local} and review the definition of the relevant Vogan $L$-packet following \cite{gross1992decomposition}\cite{gan2012symplectic}.

For any reductive algebraic group $\mathsf G$ over a local field $F$, we denote by $\wh{\mathsf G}$ the dual group of $\mathsf G$ and by ${}^L\mathsf G$ the Langlands dual group of $\mathsf G$. It was established by Langlands in \cite{langlands1973the} that every local $L$-parameter $\phi: \CL_{F}\to {}^L\mathsf G$ gives a local $L$-packet $\Pi_{F,\phi}(\mathsf G)$, which consists of a finite set of irreducible admissible representations of $G=\mathsf G(F)$. In particular, when $F$ is Archimedean, the representations in the packet are Casselman-Wallach ( \cite{casselman1989canonical}\cite{wallach1994real}), which means that they are smooth Fr\'echet representations of moderate growth and the associated Harish-Chandra modules are admissible.
 

A \textit{pure inner forms}  $\mathsf G_{\alpha}$ is an inner twist of $\mathsf G$ by $\alpha\in H^1(F,\mathsf G)$. Since pure inner forms of $\mathsf G$ share the same dual group, every local $L$-parameter $\phi: \CL_{F}\to {}^L\mathsf G$ of $\mathsf G$ can be viewed as an $L$-parameter for any pure inner form $\mathsf G_{\alpha}$.
Hence, one can define the \textit{Vogan $L$-packet}  as
\[
\Pi^{\Vogan}_{F,\phi}:=\bigsqcup_{\alpha\in H^1(F,\mathsf G)}\Pi_{F,\phi}(\mathsf G').
\]

Now we consider reductive group $\mathsf G$ with a quasi-split pure inner form. A \textit{Whittaker datum} $\Fw$ for $\mathsf G$ is a triple $(\mathsf G',\mathsf B',\psi')$ where $\mathsf  G'$ is a quasi-split pure inner form of $\mathsf G$, $\mathsf  B'$ is a Borel subgroup of $\mathsf  G'$, and $\psi'$ is a generic character of the unipotent radical $N'=\mathsf N'(F)$ of $\mathsf B'(F)$.  A representation $\pi'$ of $\mathsf G'(F)$ is called $\Fw$-generic if $\Hom_{N'}(\pi'|_{N'},\xi')\neq 0$. An $L$-parameter $\phi$ is called ($\Fw$-)\textit{generic} if the Vogan $L$-packet contains a generic representation. As argued in \cite[\S 18]{gan2012symplectic}, the genericity of an $L$-parameter is independent of the choice of the Whittaker datum.

From \cite{vogan1993local}, when $F$ is Archimedean, fixing a generic $L$-parameter $\phi$ and a Whittaker datum $\Fw$ of $\mathsf G$, there is a bijection 
\begin{equation}\label{equ: parameterization by character}
\pi\in\Pi^{\Vogan}_{F,\phi}\mapsto \eta_\pi\in \Pi(\CS_{\phi}),
\end{equation}
where $\Pi(\CS_{\phi})$ is the set of characters of the \textit{component group}
\[
\CS_{\phi}:=\pi_0(\mathrm{Cent}_{\wh{\mathsf G}}(\Im (\phi))).
\]
Therefore, we can parameterize representations in Vogan packets with characters $\eta:\CS_{\phi}\to \{\pm 1\}$.

Now we return to the setting in Section \ref{section: Gross-Prasad triple}. For $\alpha\in H^1(F,\mathsf H)=H^1(F,\mathsf{SO}(W))$, we denote by $W_{\alpha}$ the inner twist of $W$ by $\alpha$ and set $V_{\alpha}=W_{\alpha}\perp D\perp Z$. Then the inner twists of $\mathsf G$ and  $\mathsf H$  by $\alpha\in H^1(F,\mathsf H)\subset H^1(F,\mathsf G)$  are
\[
\mathsf G_{\alpha}=\mathsf{SO}(V_{\alpha})\times \mathsf{SO}(W_{\alpha})\text{ and } \mathsf H_{\alpha}=\Delta \mathsf{SO}(W_{\alpha})\ltimes \mathsf N.
\]
Together with the character $\xi_{\alpha}:\mathsf N(F)\to \BC$ obtained by the extension of $\xi_N$, we obtain the Gross-Prasad triple associated to the relevant pair $(W_{\alpha},V_{\alpha})$. The \textit{relevant Vogan packet} is defined by
\begin{equation}
\Pi_{F,\phi,\rel}^{\Vogan}:=\bigsqcup_{\alpha\in H^1(F,\mathsf H)}\Pi_\phi{(\mathsf G_{\alpha})}.
\end{equation}

It is a subset of $\Pi_{F,\phi}^{\Vogan}$ and thus can be parameterized with a subset of $\Pi(\CS_{\phi})$ via \eqref{equ: parameterization by character}.

\subsection{The conjecture}
In this subsection, we review the statement of the local Gross-Prasad conjecture formulated in \cite{gross1992decomposition}\cite{gross1994irreducible}.

Let $(W,V)$ be a relevant pair over an Archimedean local field $F$ and $(\mathsf G,\mathsf H,\xi)$ be the Gross-Prasad triple associate to it.
 For an irreducible Casselman-Wallach representation $\pi$ of $G=\mathsf G(F)$, we set $H=\mathsf H(F)$ and define the multiplicity
\begin{equation}
    m(\pi):=\dim \Hom_{H}(\pi,\xi).
\end{equation}

From the multiplicity-one theorem established in \cite{sun2012multiplicity}\cite{jiang2010uniqueness}, we have
\[
m(\pi)\leqslant 1.
\]

The local Gross-Prasad conjecture (Conjecture \ref{conj: GP in introduction}) studies the refinement behavior of the multiplicity $m(\pi)$ in a relevant Vogan $L$-packet, which shows that there is exactly one representation $\pi_{\phi}$ in $\Pi_{F,\rel,\phi}^{\Vogan}$ with multiplicity equal to $1$ and the character $\eta_{\pi_{\phi}}:\CS_{\phi}\to \{\pm 1\}$ attached to  $\pi_{\phi}$ is equal to an explicit character $\eta_{\phi}$.

 For generic character $\phi=\phi_V\times \phi_W$ of $\mathsf G$, the character 
 \[
 \eta_{\phi}=\eta_{\phi_V}^W\times \eta_{\phi_W}^V:\CS_{\phi_V}\times \CS_{\phi_W}\to \{\pm 1\}
 \]was constructed explicitly in \cite[\S 10]{gross1992decomposition}.  For every element $s\in \CS_{\phi_W}\times \CS_{\phi_V}$, set 
\begin{equation}\label{equ: definition of character}
\begin{aligned}
&\eta_{\phi_V}^W\left(s_V\right)=\operatorname{det}\left(-\mathrm{Id}_{\mathrm{M}_V^{s_V=-1}}\right)^{\frac{\operatorname{dim} \mathrm{M}_W}{2}} \cdot \operatorname{det}\left(-\mathrm{Id}_{\mathrm{M}_W}\right)^{\frac{\operatorname{dim} \mathrm{M}_V^{s_V=-1}}{2}} \cdot \varepsilon\left(\frac{1}{2}, \mathrm{M}_V^{s_V=-1} \otimes \mathrm{M}_W, \psi\right)\\
&\eta_{\phi _W}^V\left(s_W\right)=\operatorname{det}\left(-\operatorname{Id}_{\mathrm{M}_W^{s_W=-1}}\right)^{\frac{\operatorname{dim} \mathrm{M}_V}{2}} \cdot \operatorname{det}\left(-\operatorname{Id}_{\mathrm{M}_V}\right)^{\frac{\operatorname{dim}{\mathrm{M}_W^{s_W=-1}}}{2}} \cdot \varepsilon\left(\frac{1}{2}, \mathrm{M}_W^{s_W=-1} \otimes \mathrm{M}_V, \psi\right)
\end{aligned}
\end{equation}
Here $\RM_V$ and $\RM_W$ are the space of the standard representation of ${}^L\SO(V)$ and ${}^L\mathsf{SO}(W)$, respectively. The space $\RM_V^{s_V=-1}$ denotes the $s_V=(-1)$-eigenspace of $\RM_V$ and $\varepsilon(\dots)$ is the local root number defined by Rankin-Selberg integral (\cite{jacquet2009Archimedean}).

The result of this article is the following.

When $F=\BC$, the relevant Vogan $L$-packet $\Pi^{\Vogan}_{F,\phi,\rel}$ contains only one element. Hence, Part (1) of the conjecture implies Part (2) of the conjecture. We will prove the following theorem by constructing a non-zero element in $\Hom_{H}(\pi,\xi)$  in Section \ref{section: complex}.

\begin{thm}\label{thm: complex}
When $F=\BC$, Conjecture \ref{conj: GP in introduction} holds.
\end{thm}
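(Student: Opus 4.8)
\textbf{Proof strategy for Theorem \ref{thm: complex}.}

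The plan is to reduce the statement over $\BC$ to an explicit construction, following the observation of Gross and Prasad in \cite[\S 11]{gross1992decomposition} that over an algebraically closed field the combinatorics of the relevant Vogan packet degenerates completely. First I would record that over $\BC$ there are no nontrivial pure inner forms, so the relevant Vogan packet $\Pi_{\rel}^{\Vogan}(\varphi)$ consists of a single member $\pi = \pi_V \boxtimes \pi_W$, and $\CS_{\varphi}$ is trivial; hence the distinguished character $\chi_{\varphi}$ is automatically trivial and part (2) follows formally from part (1), as already noted in the excerpt. So the entire content is to show $m(\pi) = 1$ for the unique member, i.e. to exhibit a nonzero $H(\BC)$-equivariant functional $\ell \colon \pi \to \BC_{\xi}$, since $m(\pi) \leqslant 1$ is known.

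Next I would describe $\pi_V$ and $\pi_W$ concretely. A generic $L$-parameter $\varphi_V$ of $\SO(V)$ over $\BC$ is a direct sum of characters of $\CL_{\BC} = \BC^{\times}$, and the corresponding $\pi_V$ is a (fully induced, irreducible) principal series representation of $\SO(V)(\BC)$; likewise for $\pi_W$. The heart of the argument is then to construct the Gross-Prasad functional on $\pi_V \boxtimes \pi_W$. I would realize both representations in their induced models and build the functional as an integral over $H(\BC)$ (more precisely, as a composition of a Jacquet-type integral along $N$ against the character $\xi$, followed by a $\Delta\SO(W)(\BC)$-invariant pairing between the resulting representations of $\SO(W)(\BC)$). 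The convergence and nonvanishing of such integrals — in the archimedean setting, where one must control growth of matrix coefficients and the asymptotics of the sections — is exactly the kind of analytic input supplied by \cite{gourevitch2019analytic}: I would invoke their results on analytic continuation and meromorphic continuation of the relevant Schwartz-type integrals, together with the irreducibility of $\pi_V$ and $\pi_W$, to guarantee the functional is well-defined and nonzero.

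The main obstacle is the nonvanishing of the constructed functional. Convergence (in a suitable range, with meromorphic continuation elsewhere) is handled by the general machinery of \cite{gourevitch2019analytic}; producing the functional is formal. But to show the resulting $\ell$ is \emph{not} identically zero — and thus realizes $m(\pi) = 1$ rather than $m(\pi) = 0$ — requires either a careful local computation at a distinguished section (an explicit choice of $K$-finite or Schwartz vector on which the integral can be evaluated or shown to be a nonzero residue), or an inductive argument reducing to a lower-rank case where the functional is visibly nonzero. I would expect to proceed by choosing an appropriate flat section supported near the identity, reducing the integral to a nonzero archimedean local factor; the delicate point is ensuring that the poles introduced by the continuation do not cancel the leading term, which is where the genericity of $\varphi$ (ensuring $\pi_V, \pi_W$ are irreducible principal series with no degenerate constituents) is essential. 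The remaining verifications — compatibility of normalizations, that the extension of $\xi_N$ to $H(\BC)$ is the one used, and that $m(\pi) \leqslant 1$ applies in this setting — are routine given the references already cited.
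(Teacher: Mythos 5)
Your reduction to exhibiting a nonzero functional on the unique member of the packet, and the observation that part (2) is vacuous over $\BC$, match the paper. But your proposed construction --- a Jacquet-type integral along $N$ against $\xi$ followed by a $\Delta\SO(W)(\BC)$-invariant pairing, with nonvanishing to be checked afterwards --- leaves the decisive step open. You correctly flag nonvanishing as ``the main obstacle,'' but the resolution you sketch (``an appropriate flat section supported near the identity, reducing the integral to a nonzero archimedean local factor'') is not an argument: in the archimedean induced model one cannot localize sections near the identity without leaving the representation, the meromorphic continuation of such integrals can genuinely vanish at special parameters, and controlling this for \emph{all} generic parameters is precisely the hard content of the theorem. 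As written, the proposal only establishes $m(\pi)\geqslant 1$ contingent on a computation it does not carry out.

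The paper takes a route that makes nonvanishing automatic. It proves the general Theorem \ref{thm: construction by meromorphic continuation}: if there is a Borel subgroup $B$ with $B\cap H=1$, then the double coset $B(F)\cdot H(F)$ is open in $G(F)$, and one writes down an explicit nonzero left-$(B,\delta_B^{1/2}\sigma)$- and right-$(H,\xi)$-equivariant tempered measure $f(bh)\,db\,dh$ on it. Since $B$ is solvable, \cite[Theorem B]{gourevitch2019analytic} --- which is an extension theorem for equivariant tempered distributions under a solvable group action, not the meromorphic-continuation machinery you invoke --- extends this to an equivariant distribution on all of $G(F)$; by \cite{du1991representations} and compactness of $B(F)\backslash G(F)$, such distributions correspond exactly to elements of $\Hom_H(\RI_B^G(\sigma),\xi)$, and the resulting functional is nonzero because it restricts to the given nonzero measure on the open coset. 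The only substantive verification is the geometric condition $B\cap H=1$, which the paper reduces to the codimension-one computation of \cite[\S 6.2.4]{mollers2017symmetry}. To salvage your integral construction you would essentially have to redo M\"ollers' explicit nonvanishing computation in arbitrary corank, which is exactly what the open-orbit distribution argument is designed to avoid.
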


When $F=\BR$, in \cite{luothesis},  following the work of Waldspurger (\cite{waldspurger2010formule}\cite{waldspurger2012conjecture}) and Beuzart-Plessis (\cite{beuzart2019local}), Luo proved  Part (1) of Conjecture \ref{conj: GP in introduction} when the parameter $\phi$ is tempered.  In \cite{chen2022local}, by simplifying Waldspurger's approach (\cite{waldspurger2010formule}\cite{waldspurger2012calcul}\cite{waldspurger2012conjecture}\cite{waldspurger2012formule}\cite{waldspurger2012variante}), the author and Luo proved   Part (2) of Conjecture \ref{conj: GP in introduction} when the parameter $\phi$ is tempered. The main result in Section \ref{section: real} is to prove Theorem \ref{thm: reductive} that implies the following theorem based on the Conjecture \ref{conj: GP in introduction} for tempered parameters.
\begin{thm}\label{thm: real}
When $F=\BR$, Conjecture \ref{conj: GP in introduction} holds.
\end{thm}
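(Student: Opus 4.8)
The plan is to reduce Theorem \ref{thm: real} to the tempered case of Conjecture \ref{Conjecture: GP explicit}, which is already known by the work of Luo (\cite{luothesis}) for part (1) and Chen--Luo (\cite{chen2022local}) for part (2), by following the M\oe glin--Waldspurger strategy (\cite{moeglin2012conjecture}) adapted to the archimedean setting. The two main inputs, already flagged in the introduction, are a structure theorem (Proposition \ref{pro: real ind}) describing members of a generic $L$-packet as full induced representations $\pi = \mathrm{Ind}_{P}^{G}(\sigma \otimes \tau)$ where $\tau$ lives in a tempered packet of a smaller special orthogonal group and $\sigma$ is an essentially tempered representation of a general linear factor, and a multiplicity formula (Theorem \ref{thm: reductive}) computing $m(\pi)$ for such an induced representation in terms of the multiplicity of the smaller tempered piece. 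Granting these, the argument is essentially bookkeeping on $L$-parameters and component groups.

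Concretely, I would proceed as follows. First, given a generic parameter $\varphi = \varphi_V \times \varphi_W$ of $G = \SO(W) \times \SO(V)$, decompose each $\varphi_V$, $\varphi_W$ into a ``tempered core'' plus a sum of segments of the form $\rho\lvert\cdot\rvert^{s} \oplus \rho^{\vee}\lvert\cdot\rvert^{-s}$ with $s > 0$; this is possible precisely because $\varphi$ is generic (no parameter is a negative-real-part twist landing inside the dual group's positive chamber in the wrong way). Then Proposition \ref{pro: real ind} realizes every $\pi \in \Pi^{\Vogan}_{\varphi}$ as a standard-module-type induction from a parabolic $P = MN$ with Levi $M = \prod \GL_{n_i} \times \SO(W_0) \times \SO(V_0)$, the inducing data being $\bigotimes \sigma_i \otimes \tau_0$ with $\tau_0$ tempered in $\Pi^{\Vogan}_{\varphi_0}$. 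The irreducibility of this induced representation (so that it is genuinely the packet member, not merely a representation with the right Langlands quotient) is exactly what the ``sufficient condition for irreducibility'' in Section \ref{section: classification} supplies. Second, I would apply Theorem \ref{thm: reductive} to get $m(\pi) = m(\tau_0)$, so that $\sum_{\pi \in \Pi^{\Vogan}_{\varphi,\rel}} m(\pi) = \sum_{\tau_0 \in \Pi^{\Vogan}_{\varphi_0,\rel}} m(\tau_0) = 1$ by the tempered case; this proves part (1). Third, for part (2), I would track the character $\chi_\pi \in \widehat{\CS_\varphi}$ through the induction: there is a canonical identification $\CS_{\varphi} \cong \CS_{\varphi_0}$ (the $\GL$-segments contribute nothing to the component group), compatible with the bijection of (\ref{equ: nondegenerate pairing}), so $\chi_{\pi} = \chi_{\tau_0}$ under this identification. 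Meanwhile the definition (\ref{equ: definition of character}) of the distinguished character $\chi_\varphi$ must be shown to be ``stable'' under adding the $\GL$-segments, i.e. $\chi_\varphi$ corresponds to $\chi_{\varphi_0}$ under $\CS_\varphi \cong \CS_{\varphi_0}$; this is a computation with local root numbers and determinant factors, using the multiplicativity of $\varepsilon$-factors and the fact that a segment $\rho\lvert\cdot\rvert^s \oplus \rho^\vee\lvert\cdot\rvert^{-s}$ contributes symmetrically and hence trivially on the relevant eigenspaces. Combining, $\chi_\pi = \chi_{\tau_0} = \chi_{\varphi_0} = \chi_\varphi$, which is part (2).

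The main obstacle is not the reduction bookkeeping but the two black boxes it rests on, and in particular Theorem \ref{thm: reductive}, the multiplicity formula $m(\pi) = m(\tau_0)$ for the parabolically induced representation. Over non-archimedean fields this is handled by Mackey theory / geometric lemma arguments together with the Gan--Gross--Prasad machinery (the analogue of \cite[Theorem 15.1]{gan2012symplectic}), but the archimedean case requires genuine analysis: one must control $\Hom_{H}(\mathrm{Ind}_P^G(\sigma\otimes\tau),\xi)$ via a filtration of the restriction of the induced representation to $H$ coming from the $H$-orbits on $P\backslash G$, and then show (a) the ``open orbit'' contributes exactly $\Hom$ of a smaller Bessel model, handled by Xue's approach in Section \ref{section: xue's approach} reducing to codimension one, and (b) all other orbits contribute zero, which in the archimedean setting is a vanishing statement for Schwartz-homology / relative-Lie-algebra-cohomology groups that needs temperedness of $\tau$ and the genericity (positivity of exponents) to kill boundary terms. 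This is where the bulk of Sections \ref{section: xue's approach}--\ref{section: harmonic analysis} goes, and where the archimedean case genuinely departs from M\oe glin--Waldspurger. The structure theorem (Proposition \ref{pro: real ind}) is comparatively routine given the sufficient irreducibility criterion, and the $\BC$ case (Theorem \ref{thm: complex}) is independent and handled by the explicit functional construction of Section \ref{section: complex} using \cite{gourevitch2019analytic}.
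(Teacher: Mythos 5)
Your proposal is correct and follows essentially the same route as the paper: express each member of the generic packet as an irreducible induction from a tempered piece via Proposition \ref{pro: real ind}, apply the multiplicity formula of Theorem \ref{thm: reductive} to get $m(\pi_V\boxtimes\pi_W)=m(\pi_{V_0}\boxtimes\pi_{W_0})$, identify $\CS_{\varphi}$ with $\CS_{\varphi_0}$ and check $\chi_{\varphi}=\chi_{\varphi_0}$, and invoke the tempered case from \cite{luothesis} and \cite{chen2022local}. You also correctly locate where the real work lies, namely in the two inputs (the structure theorem and the multiplicity formula, the latter proved via the orbit filtration and Schwartz-homology vanishing of Sections \ref{section: xue's approach}--\ref{section: expedient}).
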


\section{Integral method and the proof for the complex case}\label{section: complex}

One of the main tools for proving Conjecture \ref{conj: GP in introduction} is the integral method. In particular, this is the only tool we would apply to prove Conjecture \ref{conj: GP in introduction} when $F=\BC$. When $F=\BC$ and $\dim V=\dim W+1$, Conjecture \ref{conj: GP in introduction} was proved by J. M\"ollers in \cite{mollers2017symmetry} using an equivalent method. In section \ref{section: complex}, we use some computation in \cite{mollers2017symmetry} and present the proof using the integral method following \cite{gourevitch2019analytic}. 

Let $F=\BR,\BC$. Let $\mathsf G$ be a quasi-split group over $F$ and $\mathsf H$ be a closed subgroup of $\mathsf G$ such that $\mathsf G/\mathsf H$ is absolutely spherical. Suppose there is a Borel subgroup $\mathsf B$ of $\mathsf G$ such that 
\[
\mathsf B\cap \mathsf H=1.
\]
Let $\mathsf T$ be the Levi component of $\mathsf B$. We set
\[
G=\mathsf G(F),\quad H=\mathsf H(F),\quad B=\mathsf B(F),\quad T=\mathsf T(F).
\]

Fix a unitary character $\psi$ of $F$. For an algebraic character $\lambda:\mathsf H\to \mathbb G_a$, we set 
 $\xi=\psi \circ \lambda_F$, which is a unitary character of $H$.

 As a consequence of the integral method in \cite{gourevitch2019analytic}, we have the following theorem.
\begin{thm}\label{thm: construction by meromorphic continuation}
Let $G, H, B, T$ as above. 
For every character $\sigma$ of $T$, we have
\[
\dim \Hom_H(\Ind_{B}^{G}(\sigma),\xi)\geqslant 1.
\]
\end{thm}

First, we construct a measure $\mu$ on $B\cdot H\subset G$ by setting $\mu=f(bh)dbdh$
where
\[
f(bh):=\delta_{B}^{-1/2}(b)\sigma^{-1}(b)\xi(h),\quad 
b\in B,\ h\in H.
\]

We can express the function $f$ in the form of
 \[
f(bh)=t^{\mu_1}\overline{t}^{\mu_2}e^{is_1\Re(\lambda(h))+s_2\Im(\lambda(h))}\quad\forall b=t\cdot n\in B=T\cdot N,\ h\in H
 \]
 for certain $s_1,s_2\in \BR$ and $\mu_1,\mu_2\in\Hom(\mathsf T,\BG_m)$.
Hence, for every differential operator $D$ on $B\times H$, the growth of $|Df|$ can be controlled by a polynomial. Therefore, $\mu$ is a tempered measure on $B\cdot H$, which is left-$(B,\delta_{B}^{1/2}\sigma)$-equivariant and right-$(\mathsf H(F),\xi)$-equivariant.
Because $\mathsf B$ is solvable, from \cite[Theorem B]{gourevitch2019analytic}, one can construct a left-$(B,\delta_{\mathsf B}^{1/2}\sigma)$-equivariant and right-$(H,\xi)$-equivariant distribution on $\mathsf G$.

From \cite{du1991representations} and the compactness of $B\bs G$, there is a one-to-one correspondence between $\Hom(\Ind_{B}^{G}(\sigma),\xi)$ and the space of left-$(B,\delta_{B}^{1/2}\sigma)$-equivariant and right-$(H,\xi)$-equivariant distributions on $G$.

Now we return to the Gross-Prasad conjecture over $F=\BC$. As argued in \cite[\S 11]{gross1992decomposition}, since there is exactly one representation in the relevant Vogan $L$-packet and this representation is a principal series, it suffices to verify that  $m(\pi)\geqslant 1$ for every principal series representation $\pi=\Ind_{B}^{G}(\sigma)$.  For this purpose, we verify $\mathsf B\cap \mathsf H=1$ when $(\mathsf G, \mathsf H,\xi)$ is the Gross-Prasad triple associated to a relevant pair $(W, V)$.

Set $\mathsf P_V=\mathsf M_V\cdot \mathsf N$ be the parabolic subgroup stabilizing the totally isotropic flag (\ref{equ: totally isotropic flag}) and the Levi subgroup $\mathsf M_V$ can be decomposed as $\mathsf M_V=\prod_{i=1}^r\GL(\BC\cdot z_i)\times \SO(V\oplus D)$. Let $\overline{\mathsf P}_V=\mathsf M_V\cdot \overline{\mathsf N}$ be the opposite parabolic subgroup of $P_V$.

Let $(\mathsf G',\mathsf H',\xi')$ be the Gross-Prasad triple associated to the relevant pair $(W,W\oplus D)$. From \cite[\S 6.2.4]{mollers2017symmetry}, there exists a Borel subgroup $\mathsf B'$ of $\mathsf G'=\SO(W\oplus D)\times \SO(W)$ such that $\mathsf B'\cap \mathsf H'=1$. We set $\mathsf B=\mathsf B'\cdot \prod_{i=1}^r\GL(\BC\cdot z_i)\cdot \mathsf B' \cdot (\bar{\mathsf N}\times 1)$. Consider the parabolic subgroup $\mathsf P=\mathsf P_V\times \SO(W)=\mathsf M\cdot (\mathsf N\times 1)$ of $\mathsf G$. Since 
$\prod_{i=1}^r\GL(\BC\cdot z_i)\mathsf B'$ and $\mathsf H'$ are subgroups of $\mathsf M=\mathsf M_V\times \SO(W)$ such that 
\[
\prod_{i=1}^r\GL(\BC\cdot z_i)\mathsf B'\cap \mathsf H'=1,
\]
we have
\[
\mathsf B\cap \mathsf H=\overline{\mathsf N}\cdot \prod_{i=1}^r\GL(\BC\cdot z_i)\mathsf B'\cap \mathsf H'\cdot \mathsf N=\GL(\BC\cdot z_i)\mathsf B'\cap \mathsf H'=1. 
\]
This completes the proof for Theorem \ref{thm: complex}.

\section{Representations in generic packets}\label{section: classification}
In this section,  we prove that, for every parameter $\phi$ of a special orthogonal group over $\BR$, there is a tempered $L$-parameter $\phi_0$ of a smaller special orthogonal group with decomposition $\phi=\phi^{\GL}\oplus\phi_0\oplus (\phi^{\GL})^{\vee}$, such that the parabolic induction 
\[
\pi_0\mapsto \sigma\ltimes \pi_0\] induces isomorphism before $\Pi_{\phi_0}^{\Vogan}$ and $\Pi_{\phi}^{\Vogan}$, where $\sigma$ is the unique representation in the packet $\Pi_{\phi^{\GL}}$. 

Let $V$ be a non-degenerate quadratic space over $\BR$. It is well-known that an $L$-parameter $\phi_V$ of $\SO(V)$ is generic if and only if 
 the adjoint $L$-function $L(s,\phi_V,\Ad)$ is holomorphic at $s=1$ (\cite[Conjecture 2.6]{gross1992decomposition} and the remark after). Based on this property, we first compute an equivalent condition for $\phi_V$ to be generic.
 
\begin{defin}
Given a generic $L$-parameter $\phi_{V}:\CW_{\BR}\to {}^L\SO(V)$. We denote by $\phi_V^{\ss}$ the \textit{semisimplification} of $\phi_V$, that is, the semisimple representation on $M_V$ defined by the composition $\phi_V$ with the standard representation $\std_V:{}^L\SO(V)\to \GL(M_V)$.
\end{defin}

Given an $L$-parameter $\phi_V$, its  semi-simplification $\phi_V^{\ss}$ can be decomposed  as following
\begin{equation}\label{equ: semisimplification}
\phi_V^{\ss}=\bigoplus
|\cdot|^{s_{V,i}^1}\phi_{l_{V,i}}^1+\bigoplus
|\cdot|^{s_{V,i}^2}\phi_{{m_{V,i}}}^2.
\end{equation}
Here $\phi_{l_{V,i}}^1\ (l_{V,i}\in\BZ)$ is a one-dimensional representation of $\CW_{\BR}=\BC\cup \BC \cdot j\ (j^{2}=-1)$ defined by
\[
\phi_{l_{V,i}}^1(z)=1,\quad \phi_{l_{V,i}}^{2}(z\cdot j)=(-1)^{l_{V,i}}, \quad z\in \BC,
\]
and $\phi_{m_{V,i}}^2\ (m_{V,i}\in \BN)$ is the two-dimensional representation of $\CW_{\BR}$ with basis $u,v$ satisfying
\[
\begin{aligned}
\phi_{m_{V,i}}^2(z)u=u, &\quad  \phi_{m_{V,i}}^2(z\cdot j)u=(-1)^{m_{V,i}}v,\\
\phi_{m_{V,i}}^2(z)v=v, &\quad  \phi_{m_{V,i}}^2(z\cdot j)v=u.
\end{aligned}
\] 
The adjoint $L$-function $L(s,\phi_V,\Ad)=L(s,\phi_V^{\ss}\otimes \phi_V^{\ss,\vee})$ is a product of factors
\[
L(s,\phi_V,|\cdot|^{s_{V,i}^1}\phi_{l_{V,i}}^1\otimes(|\cdot|^{s_{V,j}^1}\phi_{l_{V,j}}^1)^{\vee}), 
L(s,\phi_V,|\cdot|^{s_{V,i}^1}\phi_{l_{V,i}}^1\otimes (|\cdot|^{s_{V,j}^2}\phi_{m_{V,j}}^2)^{\vee}), 
\]
\[
L(s,\phi_V,|\cdot|^{s_{V,i}^2}\phi_{m_{V,i}}^2\otimes (|\cdot|^{s_{V,j}^1}\phi_{l_{V,j}}^1)^{\vee}), L(s,\phi_V,|\cdot|^{s_{V,i}^2}\phi_{l_{V,i}}^2\otimes (|\cdot|^{s_{V,j}^2}\phi_{m_{V,j}}^2)^{\vee}).
\]
From \cite{knapp1982classification}, we can compute the value of these $L$-functions and obtain that 
\begin{enumerate}
    \item $L(s,\phi_V,|\cdot|^{s_{V,i}^1}\phi_{l_{V,i}}^1\otimes(|\cdot|^{s_{V,j}^1}\phi_{l_{V,j}}^1)^{\vee})$  has a pole at  $s=1$ if and only if $\frac{1+s_{V,i}^1-s_{V,j}^1+\frac{1-(-1)^{l_{V,i}+l_{V,j}}}{2}}{2}$  is a non-positive integer. 
    \item $L(s,\phi_V,|\cdot|^{s_{V,i}^1}\phi_{m_{V,i}}^1\otimes (|\cdot|^{s_{V,j}^2}\phi_{m_{V,j}}^2)^{\vee})$ has a pole at  $s=1$ if and only if  ${1+s_{V,i}^1-s_{V,j}^2+\frac{m_{V,j}}{2}}$  is a non-positive integer.
        \item $L(s,\phi_V,|\cdot|^{s_{V,i}^2}\phi_{m_{V,i}}^2\otimes (|\cdot|^{s_{V,j}^1}\phi_{l_{V,j}}^1)^{\vee})$ has a pole at  $s=1$ if and only if ${1+s_{V,i}^2-s_{V,j}^1+\frac{m_{V,i}}{2}}$  is a non-positive integer.
    \item $L(s,\phi_V,|\cdot|^{s_{V,i}^2}\phi_{m_{V,i}}^2\otimes (|\cdot|^{s_{V,j}^2}\phi_{m_{V,j}}^2)^{\vee})$ has a pole at  $s=1$ if and only if  $1+s_{V,i}^2-s_{V,j}^2+\frac{m_{V,i}^{2}+m_{V,j}}{2}$ or $1+s_{V,i}^2-s_{V,j}^2+\frac{|m_{V,i}-m_{V,j}|}{2}$  is a non-positive integer.
\end{enumerate}
\begin{lem}\label{lem: genericity}
An parameter $\phi_V$ with semisimplification $\phi_V^{\ss}$ in (\ref{equ: semisimplification}) is generic if and only if none of 
\[\begin{aligned}
&\frac{1+s_{V,i}^1-s_{V,j}^1+\frac{1-(-1)^{l_{V,i}+l_{V,j}}}{2}}{2},{1+s_{V,i}^1-s_{V,j}^2+\frac{m_{V,j}}{2}},
\\&{1+s_{V,i}^2-s_{V,j}^1+\frac{l_{V,i}}{2}},1+s_{V,i}^2-s_{V,j}^2+\frac{|m_{V,i}-m_{V,j}|}{2}
\end{aligned}
\]
is a non-positive integer.
\end{lem}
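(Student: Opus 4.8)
The plan is to reduce Lemma \ref{lem: genericity} to a direct translation of the factorization of the adjoint $L$-function that precedes it. By the characterization recalled at the start of the section, $\varphi_V$ is generic precisely when $L(s,\varphi_V,\Ad)$ is holomorphic at $s=1$, and since $L(s,\varphi_V,\Ad)=L(s,\varphi_V^{\ss}\otimes\varphi_V^{\ss,\vee})$ factors as the product of the four families of $\GL_1\times\GL_1$, $\GL_1\times\GL_2$, $\GL_2\times\GL_1$, and $\GL_2\times\GL_2$ archimedean Rankin--Selberg $L$-factors listed above, holomorphy at $s=1$ is equivalent to asking that none of these factors has a pole at $s=1$. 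So the whole lemma is the conjunction of the four bulleted pole criteria (1)--(4), and the task is to see that those four criteria are equivalent to the four displayed quantities in the statement never being a non-positive integer.

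First I would recall the explicit shape of the archimedean local $L$-factors: for $\GL_1$ over $\BR$ the factor attached to a character $|\cdot|^s\,\varphi^1_l$ is $\Gamma_{\BR}(s+\delta)$ with $\delta\in\{0,1\}$ determined by the parity of $l$, and for the discrete-series type $\GL_2$ representation $|\cdot|^s\,\varphi^2_m$ the factor is $\Gamma_{\BC}(s+\tfrac m2)$; here $\Gamma_{\BR}(s)=\pi^{-s/2}\Gamma(s/2)$ and $\Gamma_{\BC}(s)=2(2\pi)^{-s}\Gamma(s)$, following \cite{knapp1982classification}. A $\Gamma$-factor $\Gamma(s/2+a)$ has a pole at $s=1$ iff $\tfrac12+a$ is a non-positive integer, and $\Gamma(s+a)$ has a pole at $s=1$ iff $1+a$ is a non-positive integer; applying the tensor-product recipe for Rankin--Selberg factors to each of the four families reproduces exactly the four conditions in (1)--(4) that appear in the paragraph before the lemma. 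Then I would simply observe that Lemma \ref{lem: genericity} is the statement that \emph{all} of (1)--(4) fail simultaneously, with two cosmetic simplifications: in case (3) the quantity $1+s_{V,i}^2-s_{V,j}^1+\tfrac{m_{V,i}}{2}$ is rewritten (consistently with the notation fixing $l$ versus $m$ for the two factor types) and in case (4) the pair of conditions $1+s_{V,i}^2-s_{V,j}^2+\tfrac{m_{V,i}+m_{V,j}}{2}$ and $1+s_{V,i}^2-s_{V,j}^2+\tfrac{|m_{V,i}-m_{V,j}|}{2}$ collapses to the single condition on $\tfrac{|m_{V,i}-m_{V,j}|}{2}$, because when $s_{V,i}^2=s_{V,j}^2$ (which is forced, up to the relevant equivalence, in the self-dual situation here) being a non-positive integer is controlled by the smaller shift $\tfrac{|m_{V,i}-m_{V,j}|}{2}$, so the $\tfrac{m_{V,i}+m_{V,j}}{2}$ condition is subsumed.

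In more detail, the key steps in order are: (i) invoke the genericity criterion via holomorphy of $L(s,\varphi_V,\Ad)$ at $s=1$; (ii) use the additive decomposition (\ref{equ: semisimplification}) to factor $L(s,\varphi_V,\Ad)$ into the four listed families of Rankin--Selberg factors, so holomorphy is equivalent to none of the factors having a pole at $s=1$; (iii) compute each archimedean factor as an explicit product of $\Gamma_{\BR}$ and $\Gamma_{\BC}$ factors and read off the pole location from the poles of $\Gamma$, obtaining (1)--(4); (iv) combine, and carry out the two cosmetic reductions above to land on the four quantities in the statement. I expect the only real subtlety — and hence the main obstacle — to be step (iii) in the $\GL_2\times\GL_2$ case (item (4)): correctly decomposing the tensor product $\varphi^2_{m_{V,i}}\otimes(\varphi^2_{m_{V,j}})^{\vee}$ into its constituents over $\CW_{\BR}$ to see that it contributes factors governed by both $\tfrac{m_{V,i}+m_{V,j}}{2}$ and $\tfrac{|m_{V,i}-m_{V,j}|}{2}$, and then justifying that only the latter survives in the final statement; everything else is a routine bookkeeping of $\Gamma$-factor poles.
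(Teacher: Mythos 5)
Your overall route is the same as the paper's: the paper gives no separate proof of Lemma \ref{lem: genericity} beyond the computation displayed just before it --- genericity is holomorphy of $L(s,\varphi_V,\Ad)$ at $s=1$, the adjoint $L$-function factors into the four families of archimedean Rankin--Selberg factors attached to the summands of $\varphi_V^{\ss}$, and the pole conditions (1)--(4) are read off from the $\Gamma_{\BR}$/$\Gamma_{\BC}$ shape of those factors following \cite{knapp1982classification}. Your steps (i)--(iii) reproduce exactly this, and your decision to read the $\tfrac{l_{V,i}}{2}$ in the third displayed quantity of the lemma as the $\tfrac{m_{V,i}}{2}$ appearing in item (3) of the computation is the right call.

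The one step that would fail as written is your justification for collapsing the disjunction in case (4). You claim that $s_{V,i}^2=s_{V,j}^2$ is forced in the self-dual situation; it is not --- the indices $i$ and $j$ range over all pairs of two-dimensional summands of $\varphi_V^{\ss}$, and self-duality only pairs each summand with its dual inside the decomposition (\ref{equ: decomposition of varphi}); it does not equate the exponents of two distinct summands. The collapse is nevertheless correct, for a different reason: the two shifts differ by $\tfrac{m_{V,i}+m_{V,j}}{2}-\tfrac{|m_{V,i}-m_{V,j}|}{2}=\min(m_{V,i},m_{V,j})$, which is a non-negative integer. Hence if $1+s_{V,i}^2-s_{V,j}^2+\tfrac{m_{V,i}+m_{V,j}}{2}$ is a non-positive integer, then $1+s_{V,i}^2-s_{V,j}^2+\tfrac{|m_{V,i}-m_{V,j}|}{2}$ is an integer that is no larger, hence also a non-positive integer; so the first condition implies the second, and the disjunction in (4) reduces to the single condition retained in the lemma. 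With that substitution your argument is complete and agrees with the paper's.
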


\subsubsection{Irreducibility criterions}

B. Speh and D. Vogan gave a sufficient condition for the irreducibility of limits of generalized principal series representations in \cite[Theorem 6.19]{speh1980reducibility}. We apply this result to prove the irreducibility of standard models for representations in generic packets. 

\begin{defin}\label{def: rtimes}
Given $\sigma_1\in \Pi(\GL_{n_1}), \dots,\sigma_r\in \Pi(\GL_{n_r})$ and $\pi_{V_0}\in \Pi(\SO(p,q))$. We denote by 
\[
\sigma_1\times \cdots\times\sigma_r\ltimes \pi_{V_0}
\]
the normalized parabolic induction
\[
\RI_{P_{n_1,\cdot,n_r,p+q}}^{\SO(p+n,q+n)}(\sigma_1\otimes \cdots \otimes\sigma_r\otimes\pi_{V_0})\in \Pi(\SO(p+n,q+n)),\quad n=n_1+\cdots+n_r.
\]
\end{defin}

\begin{lem}\label{lem: irreducible criterion}
Fix a generic parameter $\phi_{V}=\phi^{\GL}_V\oplus\phi_{V_0}\oplus (\phi^{\GL}_V)^{\vee}$ of $\SO(p,q) (p>q)$, for $\sigma\in \Pi_{\phi^{\GL}_V}$ and $\pi_{V_0}\in \Pi^{\Vogan}_{\phi_{V_0}}$, 
 the representation $\sigma\ltimes \pi_{V_0}$ is irreducible.
\end{lem}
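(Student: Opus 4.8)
Here is the plan. The goal is to deduce the lemma from the Speh--Vogan irreducibility criterion \cite[Theorem 6.19]{speh1980reducibility} by realizing $\sigma\rtimes\pi_{V_0}$ as a (limit of) generalized principal series of $\SO(p+n,q+n)$ and reading off its hypothesis from Lemma \ref{lem: genericity}.

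\emph{Reduction to one block.} Write $\varphi^{\GL}_V=\bigoplus_i|\cdot|^{x_i}\rho_i$, where each $\rho_i$ is one of the irreducible tempered representations $\varphi^1_{\ell}$ or $\varphi^2_{m}$ of $\CW_\BR$ and $x_i>0$, ordered with $x_1\geqslant x_2\geqslant\cdots$; correspondingly $\sigma\cong|\det|^{x_1}\delta_1\times\cdots\times|\det|^{x_r}\delta_r$ with each $\delta_i$ an (essentially) discrete series of $\GL_1(\BR)$ or $\GL_2(\BR)$. Since the condition of Lemma \ref{lem: genericity} is a conjunction over pairs of blocks, deleting a block from $\varphi_V$ preserves genericity; in particular $L(s,\varphi^{\GL}_V\otimes(\varphi^{\GL}_V)^{\vee})$ is holomorphic at $s=1$, so $\sigma$ is irreducibly induced from its blocks and no Langlands quotient need be taken. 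Inducting on $r$, the representation $|\det|^{x_2}\delta_2\times\cdots\times|\det|^{x_r}\delta_r\rtimes\pi_{V_0}$ is already irreducible and lies in the (generic) packet of $|\cdot|^{x_2}\rho_2\oplus\cdots\oplus\varphi_{V_0}\oplus\cdots\oplus|\cdot|^{-x_2}\rho_2^{\vee}$, so it is a legitimate value of ``$\pi_{V_0}$'' for a shorter $\GL$-factor; this reduces us to $\sigma=|\det|^x\delta$ a single (essentially) discrete series of $\GL_1(\BR)$ or $\GL_2(\BR)$, with $\pi_{V_0}$ allowed to be any member of a generic $L$-packet.

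\emph{Presentation as a generalized principal series.} Any member of a generic $L$-packet of a special orthogonal group embeds into a standard module whose $\SO$-part is tempered, hence (Harish-Chandra, Knapp--Zuckerman) into $\RI_{Q_0}^{\SO(V_0)}(\delta_0\otimes\nu_0)$ for a cuspidal Levi $L_0\cong\GL_1(\BR)^{a}\times\GL_2(\BR)^{b}\times\SO(V_{00})$ with $\SO(V_{00})$ of equal rank, $\delta_0$ a discrete series of $L_0$, and $\nu_0$ a real character. Inducing in stages, $\sigma\rtimes\pi_{V_0}$ embeds into $\RI_{L}^{G}(\delta_L\otimes\nu)$ with $L\cong\GL_1(\BR)^{a'}\times\GL_2(\BR)^{b'}\times\SO(V_{00})$, $\delta_L$ a discrete series (a \emph{limit} of discrete series can appear on the $\SO(V_{00})$-factor when a twist in $\nu_0$ lands on a wall, which is why Theorem 6.19 is phrased for limits), and $\nu$ real. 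It therefore suffices to prove $\RI_{L}^{G}(\delta_L\otimes\nu)$ irreducible, since $\sigma\rtimes\pi_{V_0}$ is a nonzero submodule of it and will then coincide with it.

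\emph{Applying the criterion, and the main obstacle.} By \cite[Theorem 6.19]{speh1980reducibility}, $\RI_{L}^{G}(\delta_L\otimes\nu)$ is irreducible as soon as, for every root of a maximal torus outside $L$, the attached rank-one induced representation is irreducible (equivalently, the corresponding Plancherel factor is finite and nonzero). Running over the rank-one Levis of type $\GL_1\times\SO$, $\GL_2\times\SO$, $\GL_1\times\GL_1$, $\GL_1\times\GL_2$, $\GL_2\times\GL_2$ and using Knapp's classification \cite{knapp1982classification}, each such condition becomes the non-vanishing at $s=1$ of one of the Rankin--Selberg factors making up $L(s,\varphi_V,\Ad)$ --- precisely those listed immediately before Lemma \ref{lem: genericity}. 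Since $\varphi_V$ is generic, Lemma \ref{lem: genericity} guarantees none of these poles occurs, so $\RI_{L}^{G}(\delta_L\otimes\nu)$ is irreducible and equals $\sigma\rtimes\pi_{V_0}$. (The hypothesis $p>q$ only normalizes a convenient quasi-split inner form for the discrete-series bookkeeping; the general case is identical.) The main obstacle is exactly this last translation: pinning down, for each relevant rank-one Levi, the precise archimedean reducibility points from \cite{knapp1982classification} and checking that they match the poles analyzed before Lemma \ref{lem: genericity}; the preliminary reductions --- in particular handling the possibly reducible tempered module $\RI_{Q_0}^{\SO(V_0)}(\delta_0\otimes\nu_0)$ and keeping the Langlands ordering of the $x_i$ consistent with the discrete-series data --- are routine but require care.
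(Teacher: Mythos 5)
Your overall strategy coincides with the paper's: decompose $\pi_{V_0}$ via a cuspidal parabolic into (limits of) discrete series data, realize $\sigma\rtimes\pi_{V_0}$ as a generalized principal series, and verify the hypotheses of \cite[Theorem 6.19]{speh1980reducibility} using Lemma \ref{lem: genericity}. But the entire content of the proof lies in that verification, which you defer as "the main obstacle," and the form in which you claim it is not correct. First, Theorem 6.19 is not the statement that each rank-one induced representation attached to a root outside $L$ is irreducible (a Plancherel-factor condition); it is a pair of conditions: for complex roots $\alpha$ with $n_\alpha\in\BZ$ one needs $\langle\alpha,\nu\rangle\langle\theta\alpha,\nu\rangle\geqslant 0$, and for real roots one needs a parity identity $(-1)^{n_\alpha+1}=\epsilon_\alpha\cdot\lambda(m_\alpha)$. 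The complex-root condition is not a pole condition on any Rankin--Selberg factor at all; in the paper it is checked directly (it is either $0$ or a square), independently of genericity.

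Second, and more seriously, your blanket assertion that "Lemma \ref{lem: genericity} guarantees none of these poles occurs, so the induced representation is irreducible" fails in one case that the paper must handle separately. For two $\GL_1$-blocks $|\cdot|^{s^1_{V,i}}\sgn^{l_{V,i}}$ and $|\cdot|^{s^1_{V,j}}\sgn^{l_{V,j}}$ with $l_{V,i}+l_{V,j}$ odd and $s^1_{V,i}=s^1_{V,j}$, the genericity condition of Lemma \ref{lem: genericity} is satisfied, yet the real-root parity condition for the corresponding root genuinely fails for this block decomposition. The paper resolves this by regrouping $|\cdot|^{s^1_{V,i}}\sgn^{l_{V,i}}\times|\cdot|^{s^1_{V,j}}\sgn^{l_{V,j}}=|\cdot|^{s^1_{V,i}}\sgn^{l_{V,i}}(1\times\sgn)$, recognizing $1\times\sgn$ as the limit of discrete series of $\GL_2(\BR)$ with parameter $\varphi^2_0$, and re-running the check with this two-dimensional block. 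Without this regrouping your argument would conclude reducibility (or at least fail to conclude irreducibility) in a case where the lemma asserts irreducibility, so the gap is not merely expository. Your preliminary "reduction to one block" is harmless but buys nothing: applying Theorem 6.19 to $|\det|^{x}\delta\rtimes\pi'$ still requires unwinding $\pi'$ down to a cuspidal parabolic and checking roots between all pairs of blocks, which is exactly what the paper does in a single pass.
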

\begin{proof}
From \cite[Theorem 14.2]{knapp1982classification}, we may write the tempered representation $\pi_{V_0}$ as a parabolic induction from a limit of discrete series representations. Then we can express $\sigma\ltimes \pi_{V_0}$ as
\begin{equation}\label{equ: irr check induction}
\sigma_{1}\times\cdots\times \sigma_l\ltimes \pi_{V_0'}\quad \sigma_i\in \Pi(\GL_{n_{V,i}})
\end{equation}
where $\pi_{V_0}'\in \Pi(\SO(V_0'))$ is a limit of discrete series representation and 
\[
\sigma_i=|\cdot|^{s_{V,i}^1}\sgn^i\text{ or }  \sigma_i=|\det|^{s_{V,i}^2}D_{m_{V,i}}.
\]

Following \cite[Theorem 6.19]{speh1980reducibility}, it suffices to check the following conditions
\begin{num}
\item
\label{equ: irreducible criterion} For every root $\alpha$ such that
\[
n_{\alpha}=\langle \alpha,\nu\rangle/\langle \alpha,\alpha \rangle \in \BZ,
\]\begin{enumerate}
   \item if $\alpha$ is a complex root ($\alpha\neq -\theta\alpha$), then $\langle \alpha,\nu\rangle\langle \theta\alpha,\nu\rangle\geqslant 0$;
    \item if $\alpha$ is a real root ($\alpha=-\theta\alpha$),  then
    \[(-1)^{n_{\alpha}+1}=\epsilon_{\alpha}\cdot\lambda(m_{\alpha})\]
    Here $\lambda$ is the central character of $\sigma$, $m_{\alpha}$ is the image of $\rho_{\alpha}(-I_2)$ in $G$ for the embedding $\rho_{\alpha}:\SL_2(\BR)\to G(\BR)$ determined by $\alpha$ and $\epsilon_{\alpha}=-1$.
\end{enumerate}
\end{num}
Then we check them using Lemma \ref{lem: genericity}.
\begin{enumerate}
    \item For every complex root $\alpha$ such that $n_{\alpha}\in\BZ$
    \begin{enumerate}
        \item If $\alpha$ is a root of $\SO(p-q)$, then
        \[\langle \alpha,\nu\rangle\langle \theta\alpha,\nu\rangle = 0\]
        \item Otherwise, $\theta\alpha=\alpha$, then
\[\langle \alpha,\nu\rangle\langle \theta\alpha,\nu\rangle=\langle \alpha, \nu\rangle^2 \geqslant 0\]
    \end{enumerate}
    \item For every real root $\beta_{ab}=e_{a}-e_{b}$  such that $n_{\beta_{ab}}\in \BZ$.
    
    \begin{enumerate}
        \item If $E_{aa}$ is in the $\GL_1$-block $\GL_{n_{V,i}}$  and $E_{bb}$ is the in a $\GL_1$-block $\GL_{n_{V,j}}$ (in the inducing datum in (\ref{equ: irr check induction})), then $n_{\beta_{ab}}=\frac{s_{V,i}^1-s_{V,j}^1}{2}$ is an integer, and both
        \[\frac{1+s_{V,i}^1-s_{V,j}^1+\frac{1-(-1)^{l_{V,i}+l_{V,j}}}{2}}{2},\frac{1+s_{V,j}^1-s_{V,i}^1+\frac{1-(-1)^{l_{V,i}+l_{V,j}}}{2}}{2}\]
        are not non-positive integers. If
        $l_{V,i}+l_{V,j}$ is odd, the sum is equal to $2$, then  $s_{V,i}^1=s_{V,j}^1$ or $s_{V,i}^1-s_{V,j}^1$ is odd. If $l_{V,i}+l_{V,j}$ is even, the sum is equal to $3/2$, then $s_{V,i}^1-s_{V,j}^1$ is even.
        \item If $E_{aa}$ is in the $\GL_1$-block $\GL_{n_{V,i}}$  and $E_{bb}$ is the in a $\GL_2$-block $\GL_{n_{V,j}}$,
        Lemma \ref{lem: genericity} implies 
        \[
s_{V,j}^2-\frac{l_{V,j}}{2}\leqslant s_{V,i}^1\leqslant s_{V,j}^2+\frac{l_{V,j}}{2}
        \]
        \item If $E_{aa}$ is in the $\GL_2$-block $\GL_{n_{V,i}}$  and $E_{bb}$ is the in a $\GL_2$-block $\GL_{n_{V,j}}$,  we may assume $l_{V,j}\geqslant l_{V,i}$,
        Lemma \ref{lem: genericity} implies 
        \[
 s_{V,j}^2-\frac{l_{V,j}}{2}\leqslant
    s_{V,i}^2-\frac{l_{V,i}}{2} \leqslant s_{V,i}^2+\frac{l_{V,i}}{2}\leqslant s_{V,j}^2+\frac{l_{V,j}}{2}
        \]
    \end{enumerate}
Therefore, we checked case (b)(c)  following an understanding of the parity condition in \cite[Theorem 2]{prasad2017reducible}. For case (a), the parity holds unless $l_{V,i}+l_{V,j}$ is odd and $s_{V,i}^1=s_{V,j}^1$. In this situation 
\[
|\cdot|^{s_{V,i}^1}\sgn^{l_{V,i}}\times |\cdot|^{s_{V,j}^1}\sgn^{l_{V,j}}=|\cdot|^{s_{V,i}^1}\sgn^{l_{V,i}}(1\times \sgn)\quad 
\]
And $1\times \sgn$ is the limit of discrete series representation with parameter $\phi_0^2$ which can be treated in cases (b)(c).
\end{enumerate}
\end{proof}

\subsubsection{Representations in generic packets}
 The classification of representations of $\CW_{\BR}$(\cite{knapp1982classification}) shows the following factorization into irreducible representations
\begin{equation}\label{equ: decomposition of varphi}
\phi_V^{\ss}=\phi^{\GL}_V\oplus\phi_{V_0}\oplus 
(\phi^{\GL}_{V})^{\vee}
\end{equation}
where $\phi_{V_0}$ is tempered  and 
\[
\phi_V^{\GL}=\bigoplus_{i=1}^{l_{V}}|\cdot|^{s_i}\phi_{V,i}^{\GL}  \quad\text{ where }\Re(s_{i})>0 \text{ for } 1\leqslant  i\leqslant l_{V}
\]
for discrete series parameter $\phi_{V,i}$ (i.e. the image of $\phi_{V,i}$ is bounded and does not lie in any proper Levi).

It is straightforward that $\phi_V^{\GL}$ is unpaired. Let $n_{V,i}=\dim \phi_{V,i}^{\GL}$, $n_V=\dim \phi_V^{\GL}$ and $\sigma_{V,i}$ be the unique representation of $\GL_{n}$ in the $L$-packet $\Pi_{\phi_{V,i}^{\GL}}(\GL_{n_{V,i}})$, then 
\begin{equation}\label{equ: sigmav}
\Pi_{\phi_{V}^{\GL}}(\GL_{n_V})=\{\sigma_V\}\quad\text{ where }\sigma_V=|\det|^{s_1}\sigma_{V,1}\times \cdots \times |\det|^{s_{l_{V}}}\sigma_{V,l_{V}}
\end{equation}
  By Lemma \ref{lem: irreducible criterion}, there is an injective map
  \begin{equation}\label{equ: main isom}
  \begin{aligned}
\Pi^{\Vogan}_{\phi_{V_0}}&\to\Pi^{\Vogan}_{\phi_V}\\
\pi_{V_0}&\mapsto \sigma_V\ltimes\pi_{V_0} 
  \end{aligned}
  \end{equation}
Since $\phi_V^{\GL}$ is unpaired, $|\CS_{\phi_{V_0}}|=|\CS_{\phi_V}|$ and thus $|\Pi^{\Vogan}_{\phi_{V_0}}|=|\Pi^{\Vogan}_{\phi_V}|$. This implies that the above map is an isomorphism and we have the following result.
\begin{pro}\label{pro: real ind}
For a generic $L$-parameter $\phi_V=\phi_V^{\GL}\oplus\phi_{V_0}\oplus 
(\phi_V^{\GL})^{\vee}$,  every representation $\pi_V$ in  $\Pi_{\phi_V}^{\Vogan}$ can be expressed as $\pi_V=\sigma_V\ltimes \pi_{V_0}$
where $\pi_{V_0}\in\Pi_{\phi_{V_0}}^{\Vogan}$ 
and $\sigma_V$ given in (\ref{equ: sigmav}). 
\end{pro}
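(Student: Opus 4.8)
The plan is to prove that the map (\ref{equ: main isom}),
\[
\Pi_{\varphi_{V_0}}^{\Vogan}\longrightarrow\Pi_{\varphi_V}^{\Vogan},\qquad \pi_{V_0}\longmapsto\sigma_V\rtimes\pi_{V_0},
\]
is a bijection; the Proposition is then the assertion that every member of the target lies in its image. The proof has three parts: the map is well defined, it is injective, and the two (finite) sets have equal cardinality. For well-definedness I would first apply Lemma \ref{lem: irreducible criterion} to the decomposition (\ref{equ: decomposition of varphi}), in which all $\Re(s_i)>0$, to conclude that $\sigma_V\rtimes\pi_{V_0}$ is irreducible. To see that it lies in $\Pi_{\varphi_V}^{\Vogan}$ I would use the compatibility of the local Langlands correspondence for special orthogonal groups with parabolic induction, together with the shape of $\sigma_V$ in (\ref{equ: sigmav}) as a product of twisted discrete series of the $\GL$-blocks: the semisimplified parameter of $\RI_P^{\SO(V)}(\sigma_V\otimes\pi_{V_0})$ is then $\varphi_V^{\GL}\oplus\varphi_{V_0}\oplus(\varphi_V^{\GL})^{\vee}=\varphi_V^{\ss}$, and, since the split piece $\varphi_V^{\GL}\oplus(\varphi_V^{\GL})^{\vee}$ does not change the discriminant, $\sigma_V\rtimes\pi_{V_0}$ lives on the pure inner form obtained from that of $\pi_{V_0}$ by the signature shift $(p_0,q_0)\mapsto(p_0+n_V,q_0+n_V)$, which is consistent with the parametrization of $H^1(\BR,\SO(V))$.

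For injectivity I would invoke the Langlands classification: after ordering the $\GL$-blocks by decreasing $\Re(s_i)$, and using that each $s_i$ has $\Re(s_i)>0$ and that $\pi_{V_0}$ is tempered, $\sigma_V\rtimes\pi_{V_0}$ is (isomorphic to) a standard module whose Langlands data are the Levi $\prod_i\GL_{n_{V,i}}\times\SO(V_0)$, the tempered representation $\bigotimes_i\sigma_{V,i}\otimes\pi_{V_0}$ of that Levi, and the positive exponents $(s_i)$. Being irreducible, by Lemma \ref{lem: irreducible criterion}, it coincides with its own Langlands quotient, so these data are recovered from its isomorphism class; in particular the $\SO(V_0)$-component of the tempered datum is $\pi_{V_0}$ itself, so $\sigma_V\rtimes\pi_{V_0}\cong\sigma_V\rtimes\pi_{V_0}'$ forces $\pi_{V_0}\cong\pi_{V_0}'$.

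For the cardinality count: since $\varphi_V^{\GL}$ is unpaired, the centralizer of $\Im(\varphi_V)$ in $\wh G$ differs from that of $\Im(\varphi_{V_0})$ only by a connected general linear factor attached to the $\GL$-blocks, so $\CS_{\varphi_V}\cong\CS_{\varphi_{V_0}}$; via the Vogan bijection $\Pi_{\bullet}^{\Vogan}\leftrightarrow\widehat{\CS_{\bullet}}$ underlying (\ref{equ: nondegenerate pairing}) this yields $|\Pi_{\varphi_V}^{\Vogan}|=|\Pi_{\varphi_{V_0}}^{\Vogan}|<\infty$. An injection between finite sets of equal cardinality is surjective, and this is precisely the Proposition. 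The step I expect to require the most care is injectivity: checking that $\sigma_V\rtimes\pi_{V_0}$ is genuinely in the normal form of a standard module and that the Langlands classification returns $\pi_{V_0}$, rather than a further-induced constituent of it, as the tempered piece. Granted Lemma \ref{lem: irreducible criterion}, this amounts to bookkeeping with the $\GL$-blocks, but it has to be done carefully.
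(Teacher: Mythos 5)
Your proposal is correct and follows essentially the same route as the paper: irreducibility of $\sigma_V\rtimes\pi_{V_0}$ via Lemma \ref{lem: irreducible criterion} makes the map (\ref{equ: main isom}) well defined and injective, and the unpaired nature of $\varphi_V^{\GL}$ gives $|\CS_{\varphi_{V_0}}|=|\CS_{\varphi_V}|$, hence equal packet sizes and surjectivity. The paper states this more tersely; your elaboration of the injectivity step through the Langlands classification fills in a detail the paper leaves implicit.
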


Proposition \ref{pro: real ind} shows that representations in the generic packets are in the following form.
\begin{equation}\label{equ: para ind form}
\pi_V=\sigma_V\ltimes \pi_{V_0},\quad \sigma_V=|\det|^{s_{V,1}}\sigma_{V,1}\times \cdots\times |\det|^{s_{V,r}}\sigma_{V,r},
\end{equation}
where $\Re(s_{V,1})\geqslant \Re(s_{V,2})\geqslant \cdots \geqslant \Re(s_{V,r})>0$, $\pi_{V_0}\in \Pi_{\temp}^{\mathrm{irr}}(\SO(V_0))$. And $\sigma_{V,i}=\sgn^{l_{V,i}}$ for $l_{V,i}=0,1$ or $\sigma_{V_i}=D_{m_{V,i}}$ for $m_i\in \BN_+$. 


For $\pi_V$ in the form of \ref{equ: para ind form}, we define the following notions.

\begin{defin}
We parametrize the infinitesimal character of $\pi_V$ with the \textit{Harish-Chandra parameter} for $\pi_V$ in (\ref{equ: para ind form}) is defined as
\[
\nu=(\nu_1,\dots,\nu_r,\nu_{\pi_{V_0}})
\]
where $\nu_{\pi_{V_0}}$ is the Harish-Chandra parameter of the tempered representation $\pi_{V_0}$, $\nu_i=s_i$ when $\rho_{V,i}=\sgn^{l_{V,i}}$, and $\nu_i=(s_{V,i}+\frac{m_{V,i}}{2},s_{V,i}-\frac{m_{V,i}}{2})$ when $\rho_{V,i}=D_{m_{V,i}}$.
\end{defin}
\begin{defin}\label{def: leading} We define \textit{leading index} of $\pi_V$ as the largest number among $\Re(s_{V,i})$.  We denote it by $\mathrm{LI}(\pi_V)$. %
\end{defin}

\section{Proof for the real case}\label{section: real}
In this section, we complete the proof of the local Gross-Prasad conjecture (Conjecture \ref{conj: GP in introduction}) over the real field based on the tempered cases. More specifically, following the approach in \cite{moeglin2012conjecture}, we prove a multiplicity formula for the reduction to the tempered cases and conclude the conjecture with the tempered cases proved in \cite{chen2022local}. 

The proof uses the idea of Mackey's theory. Let $\mathsf G$ be a reductive group over $\BR$, $\mathsf H$ is a closed subgroup of $\mathsf G$ and $\mathsf P$ is a parabolic subgroup of $\mathsf G$ with Levi decomposition $\mathsf P=\mathsf M\mathsf N$. We denote by $G=\mathsf G(\BR)$, $H=\mathsf H(\BR)$ and $P=\mathsf P(\BR)$. For a representation $\sigma$ of $M=\mathsf M(\BR)$, we study the space $\Hom_{H}(\Ind_P^G(\sigma),1_H)$ by analyzing the double coset $P\bs G/H$. Since $P\bs G$ is compact, the smooth induction $\Ind_P^G(\sigma)$ is equal to the Schwartz induction in the sense of \cite{du1991representations}. In order to use the analytic tools established in \cite{du1991representations} and \cite{chen2020schwartz}, we work within the category of almost linear Nash groups (\cite[Definition 1.1]{sun2015almost}) and consider the category of Nash manifolds (\cite[Definition 2.1]{sun2015almost}), with the possible action of certain almost linear Nash groups.
In particular, for a linear algebraic group $\mathsf G$ over $\BR$, $\mathsf G(\BR)$ can be treated as an almost linear Nash group.

Let $G$ be an almost linear Nash group. We denote by  $\mathcal{SF}(G)$ the categories of smooth Fr\'echet $G$-representations of moderate growth. We denote by $\mathcal{CW}(G)$ the subcategory of $\mathcal{SF}(G)$ consisting of representations with admissible $(\Fg_{\BC},K)$-modules, that is, the category of Casselman-Wallach representations of $G$. We use $\mathrm{Irr}(G)$ to denote the set of irreducible  Casselman-Wallach representations of $G$.

Our main result in this section is the following theorem.
\begin{thm}(Multiplicity formula)\label{thm: reductive}
Let $V,W$ be quadratic spaces with decompositions
$V=V_0\perp (X_V+X_V^{\vee})$, $W=W_0\perp (X_W+X_W^{\vee})$.
Let $\pi_{V_0}\in \Irr(\SO(V_0))$, $\pi_{W_0}\in \Irr(\SO(W_0))$ and $\sigma_V\in \mathcal{CW}(\SO(V)),\sigma_W\in \mathcal{CW}(\SO(W))$ such that
\begin{equation}\label{equ: condition of sigma}
\sigma_V=|\det|^{s_{V,1}}\sigma_{V,1}\times \cdots \times |\det|^{s_{V,r_V}}\sigma_{V,r_V},\quad \sigma_W=|\det|^{s_{W,1}}\sigma_{W,1}\times \cdots \times |\det|^{s_{W,r_W}}\sigma_{W,r_W}
\end{equation}
for $\Re(s_{V,i}),\Re(s_{W,i})>0$ and tempered representations $\sigma_{V,i}\in \Irr(\GL_{n_{V,i}}(F))$ ($i=1,\dots, r_V$), $\sigma_{W,i}\in \Irr(\GL_{n_{W,i}}(F)$ ($j=1,\dots,r_W$). Here $n_{V,i},n_{W,i}$ are integers such that $\sum_{i=1}^{r_V}n_{V,i}=\dim X_V$ and $\sum_{i=1}^{r_W}n_{W,i}=\dim X_W$. Then we have 
\[
m((\sigma_{V}\ltimes \pi_{V_0})\boxtimes (\sigma_{W}\ltimes \pi_{W_0}))=m(\pi_{V_0}\boxtimes\pi_{W_0}).
\]
\end{thm}
It worth mentioning that, in the above theorem, the representations $\sigma_{V}\ltimes \pi_{V_0}$ and $\sigma_{W}\ltimes \pi_{W_0}$ can be reducible. The reducible case of  the multiplicity formula is actually necessary when it is applied in \cite{chen2024arithmetic}. In this article, to complete the proof for the real case of the Gross-Prasad conjecture, we only use the formula when both $\sigma_{V}\ltimes \pi_{V_0}$ and $\sigma_{W}\ltimes \pi_{W_0}$ are irreducible.

\begin{proof}[Proof for Theorem \ref{thm: real} from Theorem \ref{thm: reductive}]
Given generic parameters $\phi_{V},\phi_W$,   from Proposition \ref{pro: real ind},  we can express the parameters as
\begin{equation}\label{equ: dec of para}
\phi_V=\phi_V^{\GL}+\phi_{V_0}+(\phi_V^{\GL})^{\vee},\quad \phi_W=\phi_W^{\GL}+\phi_{W_0}+(\phi_W^{\GL})^{\vee}
\end{equation}
such that $\phi_V^{\GL}$ has no self-dual subrepresentation.

 Let $\sigma_{V}$ be the unique representation in $\Pi_{\phi_V^{\GL}}^{\Vogan}$ and $\sigma_W$ be the unique representation in $\Pi_{\phi_W^{\GL}}^{\Vogan}$. For every $\pi_V\boxtimes\pi_W\in \Pi_{\phi_V\times \phi_W}^{\Vogan}$, there exists $\pi_{V_0}\boxtimes \pi_{W_0}\in \Pi^{\Vogan}_{\phi_V\times \phi_W}$  such that
\[
\pi_V=\sigma_V\ltimes \pi_{V_0},\quad \pi_W=\sigma_W\ltimes \pi_{W_0}.
\]
Therefore, the map \[
    \begin{aligned}
        \Pi_{\phi_{V_0}}^{\Vogan}&\to \Pi_{\phi_V}^{\Vogan}&\quad\Pi_{\phi_{W_0}}^{\Vogan}&\to \Pi_{\phi_W}^{\Vogan}\\
        \pi_{V_0}&\mapsto \sigma_{V}\ltimes \pi_{V_0} &\pi_{W_0}&\mapsto \sigma_W\ltimes \pi_{W_0}.
    \end{aligned}
    \]
    is an isomorphism. Hence, we can identify  the component group $\CS_{\phi_{V_0}\times \phi_{W_0}}$ with $\CS_{\phi_V\times \phi_W}$. Under this identification, it can be easily verified that for $\phi_V,\phi_W,\phi_{V_0},\phi_{W_0}$, we have \[
    \eta_{\phi_{V_0}\times \phi_{W_0}}=\eta_{\phi_{V}\times \phi_{W}}.\]  Theorem \ref{thm: reductive} reduces Conjecture \ref{conj: GP in introduction} for $\phi_V,\phi_W$ to that for $\phi_{V_0},\phi_{W_0}$, which is a tempered case  proved in \cite{luothesis}\cite{chen2022local}.
\end{proof} 

 Following \cite{moeglin2012conjecture}, there are three steps in our proof for Theorem \ref{thm: reductive}: reduction to basic cases, the first inequalities, and the second inequalities.
 
 A relevant pair $(W,V)$ is called \textit{basic} if $\dim V=\dim W+1$.
For a general relevant pair  $(W,V)$ with decomposition $V=W\perp Z\perp D$, we let $D^+$ be the anisotropic line with the opposite signature to $D$. We set $Z^+=Z\perp (D+D^+)$ and set $(V,W^+)=(V,Z^+\oplus W)$ and we call $(V,W^+)$ the \textit{basic relevant pair associate to} $(W,V)$.

\begin{defin}
Let $s_1,s_2,\dots,s_{r+1}$ be complex numbers. We say the $(r+1)$-tuple $\underline{s}=(s_1,\dots,s_{r+1})$ are \textit{in general position}, if $\underline{s} \in
\BC^{t+1}$ does not lie in the set of zeros of countably many polynomial functions on $\BC^{t+1}$.
\end{defin}
For the $(r+1)$-tuple $\udl{s}=(s_1,\dots,s_{r+1})$, we denote by $\sigma_{\underline{s}}$ the spherical principal series representation $|\cdot|^{s_1}\times \cdots \times |\cdot|^{s_{r+1}}$.
\begin{lem}(Reduction to basic cases)\label{lem: reduction to codim 1}
For  every $\pi_V\in \Irr(\SO(V))$ and $\pi_W\in \Irr(\SO(W))$, we have
\[
m(\pi_V\boxtimes\pi_W)=m((\sigma_{\underline{s}}\ltimes\pi_W)\boxtimes\pi_V)
\]
for $\underline{s}=(s_1,\dots,s_{r+1})\in \BC^{r+1}$ in general position.
\end{lem}
With this lemma, we find such a spherical principal series $\sigma_{\udl{s}}$ and reduce Theorem \ref{thm: reductive} to the case for a relevant pair $(V, W\oplus Z^+)$ and representations $\sigma_{\udl{s}}\ltimes \pi_W,\pi_V$ that can be expressed in the parabolic induction form as in (\ref{equ: para ind form}), which is a basic case.

\begin{pro}(Basic case of the multiplicity formula)\label{prop: multiformulatemperneeded}
Given a basic relevant pair $(W, V)$, let $\pi_V\in \mathcal{CW}(\SO(V))$ and $\pi_W\in \mathcal{CW}(\SO(W))$ as in Theorem \ref{thm: reductive}, we have
\[
m(\pi_V\boxtimes \pi_W)=m(\pi_{V_0}\boxtimes\pi_{W_0}).
\]
\end{pro}
The inequalities $m(\pi_V\boxtimes \pi_W)\geqslant m(\pi_{V_0}\boxtimes \pi_{W_0})$ and $m(\pi_V\boxtimes \pi_W)\leqslant m(\pi_{V_0}\boxtimes \pi_{W_0})$ are called "the first inequality" and "the second inequality" in \cite{moeglin2012conjecture}. Using a similar approach as \cite{moeglin2012conjecture}, we prove the first inequality using mathematical induction with the following lemma as the building block (Section \ref{section: multiplicity formula}).
 

\begin{lem}\label{lem: basic}
Let $\pi_V$ be a representation in a generic packet and $\pi_W\in \Irr(\SO(W))$.
\begin{enumerate}
\item When $\dim V=\dim W+1$ and $\Re(s)\geqslant \mathrm{LI}(\pi_V)$, we have
\[
m(\pi_V\boxtimes\pi_W)\geqslant m((|\cdot|^s\sgn^m\ltimes \pi_W)\boxtimes\pi_V).
\]

\item When $\dim V=\dim W+3$ and $\Re(s)\geqslant \mathrm{LI}(\pi_V)$, we have
\[
m(\pi_V\boxtimes(|\cdot|^{s+\frac{m}{2}}\sgn^{m+1}\ltimes \pi_W)) \geqslant m((|\det|^sD_m\ltimes \pi_W)\boxtimes\pi_V),
\]
where $D_m$ is the Langlands quotient of the induction $|\cdot|^{-\frac{m}{2}}\times |\cdot|^{\frac{m}{2}}\sgn^{m+1}$.
\end{enumerate}
\end{lem} 

The second inequality holds in a more general setup.
 \begin{lem}\label{lem: basic second}
For $\pi_V\in \mathcal{CW}(\SO(V))$, $\pi_W\in \mathcal{CW}(\SO(W))$ and $\sigma_{X^+}$ is a generic representation in $\GL(X^+)$, we have
\[
m(\pi_V\boxtimes\pi_W)\leqslant m((\sigma_{X^+}\ltimes \pi_W)\boxtimes\pi_V)
\]
 \end{lem}

We prove one inequality of Lemma \ref{lem: reduction to codim 1} and Lemma \ref{lem: basic} in Section \ref{section: multiplicity formula} and prove the other inequality of Lemma \ref{lem: reduction to codim 1} and Lemma \ref{lem: basic second} in Section \ref{section: harmonic analysis}. It is worth mentioning that Lemma \ref{lem: reduction to codim 1} can also be proved with Schwartz homology as in \cite{xue1bessel}.
 
\subsection{Some functors and vanishing theorems}
In this section, we review some analytic tools established in \cite{du1991representations} and \cite{chen2020schwartz} to study certain Fréchet spaces of moderate growth.

\subsubsection{Schwartz induction}
Let $G$ be an almost linear Nash group.
\begin{pro}\label{pro: tensor is exact}
For $\pi\in \mathcal{CW}(G)$, the projective tensor product  $\cdot\wh{\otimes} \pi$ is an exact functor in $\mathcal{SF}(G)$.
\end{pro}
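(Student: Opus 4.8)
The plan is to split the statement into two independent checks: first, that $\cdot\wh{\otimes}\pi$ really is an endofunctor of $\Pi_{\FMG}(G)$, i.e. that $A\wh{\otimes}\pi$ (with the diagonal $G(\BR)$-action) again lies in $\Pi_{\FMG}(G)$ for every $A\in\Pi_{\FMG}(G)$; and second, that it carries a short exact sequence to a short exact sequence. Recall that in this (quasi-abelian) category a short exact sequence $0\to A\to B\to C\to 0$ means a complex of continuous $G(\BR)$-equivariant maps which is algebraically exact and whose maps are strict — $A$ a closed submodule, $C$ the Hausdorff quotient $B/A$ — so "exactness of the functor" is precisely the preservation of such sequences, equivalently the preservation of kernels and cokernels.

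For the first check I would use that $\pi$ is not just an object of $\Pi_{\FMG}(G)$ but a Casselman--Wallach representation. The projective tensor product of two Fréchet spaces is Fréchet, and the moderate-growth bounds on $A$ and on $\pi$ multiply to give moderate-growth bounds for the diagonal action against the product seminorms; the genuinely delicate point is the \emph{smoothness} of the diagonal action on the completed tensor product, since smoothness need not survive the completion. Here one uses that $\pi$ is Casselman--Wallach — so in particular a nuclear Fréchet $G(\BR)$-module, realizable via Casselman's embedding as a closed $G$-subspace of (finitely many copies of) a principal series — which is enough to conclude that $A\wh{\otimes}\pi$ is again a smooth Fréchet $G(\BR)$-module of moderate growth; this is the archimedean analogue of the fact that completed tensoring with an admissible module preserves smoothness, and one may alternatively cite the standard lemma on tensor products of moderate-growth representations.

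For the second check the argument is purely functional-analytic, and nuclearity is what makes it work. Completed projective tensor product is always right exact on Fréchet spaces: if $g\colon B\to C$ is a topological quotient map then so is $g\wh{\otimes}\mathrm{id}_\pi\colon B\wh{\otimes}\pi\to C\wh{\otimes}\pi$, by the open mapping theorem. Left exactness is where the hypothesis enters: being Casselman--Wallach, $\pi$ is a nuclear Fréchet space (a closed subspace of a nuclear space such as $C^\infty$ of a manifold, hence nuclear), so by Grothendieck's theorem the endofunctor $\pi\wh{\otimes}(\cdot)$ of the category of Fréchet spaces is exact; in particular it sends a strict injection with closed image to another such, and the middle-exactness of the three-term sequence is preserved. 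Combining the two gives that $0\to A\wh{\otimes}\pi\to B\wh{\otimes}\pi\to C\wh{\otimes}\pi\to 0$ is topologically exact, and since all the maps are visibly $G(\BR)$-equivariant, this is a short exact sequence in $\Pi_{\FMG}(G)$.

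I expect the main obstacle to be the first check rather than the second: verifying that the diagonal $G(\BR)$-action on $A\wh{\otimes}\pi$ stays smooth and of moderate growth is exactly the place where one must use that $\pi$ is Casselman--Wallach and not merely a smooth Fréchet representation of moderate growth — for arbitrary such $\pi$ the analogue can fail. The functional-analytic exactness, by contrast, reduces to Grothendieck's nuclearity theorem together with the elementary right-exactness of $\wh{\otimes}$, and requires no new ideas.
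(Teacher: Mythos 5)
Your proposal is correct and follows essentially the same route as the paper, which simply invokes the nuclearity of Casselman--Wallach representations (Bernstein--Kr\"otz) and then the exactness of completed projective tensoring with a nuclear Fr\'echet space (Lemma A.3 of Casselman--Hecht--Mili\v{c}i\'c), i.e.\ Grothendieck's theorem as you cite it. Your additional verification that $A\wh{\otimes}\pi$ stays in $\Pi_{\FMG}(G)$ is a reasonable extra care the paper leaves implicit, but the core argument is identical.
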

\begin{proof}
From \cite{bernstein2014smooth}, the underlying Fr\'echet space of $\pi$ is nuclear and the proposition follows from \cite[Lemma A.3]{casselman2000bruhat}.
\end{proof}

    Let $H$ be a Nash subgroup of $G$ and $\pi_H\in \mathcal{SF}(H)$. We denote by $H\bs (G\times \pi_H)$ the vector bundle over $H\bs G$ obtained by $G\times \pi_H$ quotient by left $H$-action 
    \begin{equation}\label{equ: left action def}
    h.(g,v)=(h\cdot g,\pi_H(h).v) \quad \text{for }h\in H,\quad g\in G \text{ and }v\in \pi_H.
    \end{equation} Moreover, this vector bundle is tempered. We define the \textit{Schwartz induction} as the functor \[
    \Ind^{\CS,G}_P:\mathcal{SF}(H)\to \mathcal{SF}(G)
    \]
\[
\pi_H\mapsto\Gamma^{\mathcal{S}}(H\backslash G,\pi_H),
\]
where $\Gamma^{\mathcal{S}}(H\backslash G,\pi_H)$ stands for the space of Schwartz sections over the tempered vector bundle $H\backslash (G\times \pi_H)$.
In particular, when $G$ is reductive, for a parabolic subgroup $P$ of $G$, $P\bs G$ is compact, so the Schwartz induction $\Ind_P^{{\CS,G}}$ coincides with the smooth induction, and we denote by $\RI_P^G$ the normalized induction $\Ind_P^{\CS,G}(\delta_P^{1/2}\cdot)$, where $\delta_{P}$ is the modular characters of $P$. We will  use the following properties of Schwartz inductions. 
\begin{pro}\label{pro: Schwartz induction}
\begin{enumerate}
    \item(\cite[Propositon 7.1]{chen2020schwartz}) $\Ind^{\CS,G}_H$ is an exact functor from $\mathcal{SF}(H)\to \mathcal{SF}(G)$.
    \item (\cite[Proposition 7.2]{chen2020schwartz}) For a closed subgroup $H'$ of $H$, we have \[\Ind_H^{\CS,G}\circ\Ind_{H'}^{\CS,H}=\Ind_{H'}^{\CS,G}.\]
    \item (\cite[Proposition  7.4]{chen2020schwartz}\cite{bernstein2014smooth}) For $\pi_G\in \mathcal{CW}(G)$ and $\pi_H\in\mathcal{SF}(H)$, then
    \[
    \Ind_{H}^{\CS,G}(\pi_H\wh{\otimes}\pi_G|_H)=\Ind_H^{\CS,G}(\pi_H)\wh{\otimes}\pi_G.
    \]
\end{enumerate}
\end{pro}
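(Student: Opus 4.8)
All three assertions are from \cite{chen2020schwartz}; the plan is to recall why each holds. Throughout, fix $\pi_H\in\Pi_{\FMG}(H(\BR))$, write $X=H(\BR)\bs G(\BR)$ for the associated Nash homogeneous space, and let $\CE_{\pi_H}=H(\BR)\bs(G(\BR)\times\pi_H)$ be the attached tempered vector bundle on $X$, so that $\Ind_H^{\CS,G}(\pi_H)=\Gamma^{\CS}(X,\CE_{\pi_H})$. For (1), the plan is to reduce to the statement that the global Schwartz-sections functor $\CE\mapsto\Gamma^{\CS}(X,\CE)$ is exact on tempered vector bundles over a Nash manifold: a short exact sequence $0\to\pi_H'\to\pi_H\to\pi_H''\to0$ in $\Pi_{\FMG}(H(\BR))$ yields a short exact sequence $0\to\CE_{\pi_H'}\to\CE_{\pi_H}\to\CE_{\pi_H''}\to0$ of tempered bundles on $X$. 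Left exactness of $\Gamma^{\CS}$ is formal; the real point is to lift a Schwartz section of $\CE_{\pi_H''}$ to one of $\CE_{\pi_H}$, and I would do this by lifting locally on Nash charts where the bundle sequence splits and then gluing with a Nash partition of unity, using that Schwartz sections are preserved under multiplication by bounded Nash functions (the Nash-manifold formalism of \cite{sun2015almost}); see \cite[Proposition~7.1]{chen2020schwartz}.

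For (2), the plan is induction in stages. The projection $p\colon H'(\BR)\bs G(\BR)\to H(\BR)\bs G(\BR)$ is a Nash fibration with typical fibre $H'(\BR)\bs H(\BR)$, and the bundle on $X$ attached to $\Ind_{H'}^{\CS,H}(\pi_{H'})$ has, as its fibre over $H(\BR)g$, the space of Schwartz sections of $\CE_{\pi_{H'}}$ along the fibre $p^{-1}(H(\BR)g)$. I would then invoke the Fubini-type theorem for Schwartz functions along a Nash fibration to identify the global Schwartz sections over $X$ of this ``fibrewise-Schwartz-sections'' bundle with $\Gamma^{\CS}(H'(\BR)\bs G(\BR),\CE_{\pi_{H'}})=\Ind_{H'}^{\CS,G}(\pi_{H'})$; see \cite[Proposition~7.2]{chen2020schwartz}.

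For (3), the plan is a projection formula together with an untwisting of bundles. Since $\pi_G$ is Casselman--Wallach it is nuclear (\cite{bernstein2014smooth}), so I can first move $-\wh{\otimes}\pi_G$ through the Schwartz-sections functor, obtaining $\Gamma^{\CS}(X,\CE)\wh{\otimes}\pi_G\cong\Gamma^{\CS}(X,\CE\wh{\otimes}\underline{\pi_G})$ with $\underline{\pi_G}$ the constant bundle of fibre $\pi_G$. It then remains to produce an isomorphism of tempered vector bundles $\CE_{\pi_H}\wh{\otimes}\underline{\pi_G}\cong\CE_{\pi_H\wh{\otimes}\pi_G|_H}$ on $X$; the Mackey-type untwist $(g,v\otimes w)\mapsto(g,v\otimes\pi_G(g)w)$ achieves this, since it intertwines the $H(\BR)$-action $h\cdot(g,v\otimes w)=(hg,\pi_H(h)v\otimes w)$ with $h\cdot(g,v\otimes w)=(hg,\pi_H(h)v\otimes\pi_G(h)w)$, and its inverse is $(g,v\otimes w)\mapsto(g,v\otimes\pi_G(g)^{-1}w)$; see \cite[Proposition~7.4]{chen2020schwartz}.

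I expect the hardest part to be in (3): establishing the compatibility of $-\wh{\otimes}\pi_G$ with the global Schwartz-sections functor for a general tempered bundle --- this genuinely uses the nuclearity of $\pi_G$ rather than being a formal manipulation --- and then verifying that the untwisting map and its inverse are isomorphisms of \emph{tempered}, not merely topological, vector bundles, which comes down to a moderate-growth estimate for $\pi_G$. Everything else is bookkeeping within the Nash-manifold and Schwartz-section formalism of \cite{chen2020schwartz} and \cite{sun2015almost}.
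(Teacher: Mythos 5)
Your sketch is correct, and in fact the paper gives no proof of this proposition at all: it is quoted verbatim from \cite{chen2020schwartz} (and \cite{bernstein2014smooth} for the nuclearity input in part (3)), so there is nothing to diverge from. Your reconstruction — exactness of $\Gamma^{\CS}$ via local lifting and Nash partitions of unity, induction in stages via the fibration $H'(\BR)\bs G(\BR)\to H(\BR)\bs G(\BR)$, and the projection formula via nuclearity plus the Mackey untwist $(g,v\otimes w)\mapsto(g,v\otimes\pi_G(g)w)$ (whose equivariance checks out) — matches the standard arguments in the cited source.
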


\subsubsection{$\Hom$-functor}

For any category $\mathcal{C}$ and an object $M$. It is well-known that the $\Hom(-,M)$-functor is left exact and invariant under projective limit. We first apply this result to the category $\mathcal{SF}(G)$ and obtain the following result.
\begin{lem}\label{lem: hom pre 0}
\begin{enumerate}
    \item For an exact sequence 
    \[
    0\to \pi_1\to \pi_2\to \pi_3\to 0
    \]
    in $\SF(G)$, suppose $\Hom_G(\pi_1,1_{G})=\Hom_G(\pi_3,1_{G})=0$, then we have
    \[
    \Hom_G(\pi_2,1_{G})=0.
    \]
    \item For a directed set $I$  and projective system $(\pi_{\alpha},f_{\alpha\beta})_{\alpha,\beta\in I}$ in $\SF(G)$. For $I'\subset I$, suppose 
    \[
   \Hom_{G}(\pi_{\alpha},1_G)=0, \quad \text{for all } \alpha\in I',
    \]
    then we have 
    \[
    \Hom(\varprojlim_{i\in I}\pi_{\alpha},1_G)=0.
    \]
\end{enumerate}
\end{lem}

\begin{defin}\label{label: complete decreasing}
\begin{enumerate}
    \item For a countable directed set $I$ and a Fr\'echet space $V$, a set $\{V_k\}_{k\in I}$ of subspaces of $V$ is called a \textbf{complete decreasing filtration} of $\pi$ if  

\begin{enumerate}
\item For $i<j$, $V_j\subset V_i$, and we denote by  $f_{ji}$ the injective map. 
\item $\{V_i,f_{ji}\}_{i<j\in I}$ is a complete projective system, that is, 
\[
\varprojlim_{i\in I} V/V_i=V.
\]
\end{enumerate}
\item 
The \textbf{composition factors} of a complete decreasing filtration are 
\[
V_{\alpha}/V_{\alpha+}, \quad \alpha\in I,
\]
where $\alpha+$ is the successor of $\alpha$ in $I$.
\end{enumerate}
\end{defin}

\begin{cor}\label{cor: graded vanishing}
For an almost linear Nash group $G$, $\pi\in \mathcal{SF}(G)$ and a complete decreasing filtration $\{\pi_k\}_{k\in I}$ of $\pi$, suppose 
\[
\Hom_G(V_{\alpha}/V_{\alpha+},1_{G})=0,\quad \text{ for all }\alpha\in I,
\]
then we have
\[
\Hom_G(\pi,1_G)=0.
\]
\end{cor}
\begin{proof}
This corollary can be obtained from Lemma \ref{lem: hom pre 0} with the same argument as \cite{xue2020bessel}.
\end{proof}

The results \cite[Propositions 8.2,  8.3]{chen2020schwartz} generate a complete decreasing filtration that is helpful for distributional analysis. 
\begin{thm}\label{thm: CScdf}
    Let $\CX$ be a Nash manifold, $\CZ$ be a closed Nash manifold of $\CX$ and $\CU=\CX-\CZ$. There is a decreasing complete  filtration on $\Gamma^{\mathcal{S}}_{\mathcal{Z}}(\mathcal{X},\mathcal{E})$, denoted by $\Gamma^{\mathcal{S}}_{\mathcal{Z}}(\mathcal{X},\mathcal{E})_k$, 
whose composition factors are isomorphic to 
\begin{equation}
\Gamma^{\mathcal{S}}(\mathcal{Z},\operatorname{Sym}^{k} \mathcal{N}_{\mathcal{Z} / \mathcal{X}}^{\vee} \otimes \mathcal{E}|_{\mathcal{Z}}), \quad k=0,1, \dots,    
\end{equation}
where $\mathcal{N}_{\mathcal{Z}/\mathcal{X}}^{\vee}$ is the conormal bundle over $\mathcal{Z}$ (\cite[Section 6.1]{chen2020schwartz}).
\end{thm}
\subsubsection{Vanishing by infinitisimal characters}

\begin{defin}
For an infinitesimal character $\chi:\CZ(\CU(\Fg_{\BC}))\to \BC$, we denote by $\chi^{\vee}$ the infinitesimal character generated by the relation
\[
\chi^{\vee}(X)=\chi(-X),\quad X\in \Fg_{\BC}.
\]
\end{defin}
    
\begin{thm}\label{thm: vanishing by infinitisimal}
For representation $\pi_1,\pi_2$ of $G$ with infinitesimal characters $\chi_{\pi_1},\chi_{\pi_2}$, suppose $\chi_{\pi_1}\neq \chi_{\pi_2}^{\vee}$, then we have
\[
\Hom_G(\pi_1\wh{\otimes} \pi_2,1_{G})=0.
\]
\end{thm}
\begin{proof}
The existence of elements in $\Hom_G(\pi_1\wh{\otimes} \pi_2,1_{G})$ implies the existence of a homomorphism on $(\Fg_{\BC},K)$-modules. This contradicts the relation of infinitesimal characters.
\end{proof}
We apply the above theorem in the following setup:
\begin{cor}
\label{thm: vanishing 1}
Suppose $\pi_{V_0}\in \mathcal{SF}(\SO(V_0))$ and $\pi_{V}\in \Irr(\SO(V))$.  \[(\sigma_{\underline{s}}\ltimes\pi_{V_0})\wh{\otimes} \pi_V
\]
for $\sigma_{\underline{s}}=|\cdot|^{s_1}\times \cdots \times |\cdot|^{s_r}$ and $\underline{s}=(s_1,\dots,s_r)$ in general positions.    
\end{cor}

\subsubsection{Vanishing by leading index} 

\begin{defin}\label{def: A leading}
By Langlands classification, for every $\pi_V\in \Irr(\SO(V))$, we can express $\pi_V$ as the Langlands quotient of certain induction 
\begin{equation}\label{equ: Appendix std md}
|\det|^{s_1}\rho_1\times \cdots |\det|^{s_r}\rho_r\ltimes \pi_{V_0}
\end{equation}
for $\Re(s_1)\geqslant\cdots\geqslant\Re(a_r)>0$ and tempered representations $\rho_1,\dots,\rho_r,\pi_{V_0}$. We let the \textit{leading index for Langlands quotient} $\mathrm{LI}(\pi_V)=\Re(s_1)$. This definition is compatible with Definition \ref{def: leading} when the standard module (\ref{equ: Appendix std md}) is irreducible. In particular, the definitions are compatible when $\pi_V$ is in a generic packet.
\end{defin}

\begin{thm}\label{thm: CCZ}(\cite[Theorem A.1.1]{chen2023multiplicity})
If $\Re(s)>\mathrm{LI}(\pi_V)$, then 
\[
\Hom_{\Delta \SO(V)}((|\det|^{s}\rho\ltimes\pi_{V_0})\boxtimes \pi_V,1_{\Delta\SO(V)})=0,\quad \text{ for }\pi_{V_0}\in \mathcal{SF}(\SO(V_0)), \pi_V\in \Irr(\SO(V)). 
\]
\end{thm}
\subsection{The restriction of principal series to mirabolic subgroups}\label{section: mirabolic}
In this section, we study the graded structure of the restriction of certain principal series of $\GL_n$ to the mirabolic subgroup $R_{n-1,1}$ as in \cite[\S 5]{xue2020bessel}, that is, the subgroup of $\GL_n$ leaving $V_n/V_{n-1}$ invariant, where $V_n$ is the space of the standard representation of $\GL_n$ and $V_{n-1}$ is an $(n-1)$-dimensional subspace of $V_n$. These results will be used in the distributional analysis of the open orbit in Section \ref{section: multiplicity formula}.
\subsubsection{Graded structure of $|\cdot|^{-\frac{m}{2}}\times |\cdot|^{\frac{m}{2}}\sgn^{m+1}$}
By definition, the discrete series $D_m$ of $\GL_2(\BR)$ is the unique quotient of the induction $\pi_I=|\cdot|^{-\frac{m}{2}}\times |\cdot|^{\frac{m}{2}}\sgn^{m+1}$. We denote $\pi_F$ the unique subrepresentation of this induction $\pi_I$, then $\pi_m$ is an $m$-dimensional irreducible representation of $\GL_2(\BR)$. 
\begin{itemize}
\item Let $\mathsf B_2$ be the (upper-triangular) Borel subgroup of $\GL_2$ with Levi decomposition  $\mathsf B_2=\mathsf T_2 \mathsf N_2$. Let $K=\SO_2(\BR)$, $B_2=\mathsf B_2(\BR)$, $T_2=\mathsf T_2(\BR)$, $N_2=\mathsf N_2(\BR)$ and $R_{1,1}=\mathsf R_{1,1}(\BR)$.
\item We note 
\[
n_x=\left(\begin{array}{cc}
    1 & x \\
     & 1
\end{array}\right)\quad k_{\theta}=\left(\begin{array}{cc}
    \cos(\theta) & \sin(\theta) \\
    -\sin(\theta) & \cos(\theta)
\end{array}\right)\quad w_2=\left(\begin{array}{cc}
     & 1 \\
    1 & 
\end{array}\right),
\]
then $N_2=\{n_x:x\in \BR\}$ and $K=\{k_{\theta}:\theta\in [0,2\pi)\}$.
\item We note $\CX_2=B_2\bs \GL_2(\BR)$, $\mathcal{U}_2=B_{2}\backslash B_{2}w_2B_{2}\subset \mathcal{X}_2$ and   $\mathcal{Z}_2=B_{2}\backslash B_{2}$.
\item By definition,
\[
\pi_I=\Ind_{B_2}^{\CS,\GL_2(\BR)}(|\cdot|^{\frac{m+1}{2}}\otimes |\cdot|^{\frac{m-1}{2}}\sgn^{m+1}).
\]
We denote $\chi_1=|\cdot|^{-m+1}\sgn^{m+1}$ and $\chi_2=|\cdot|^{\frac{m-1}{2}}\sgn^{m+1}$, then
\[
\pi_I= \Ind_{B_2}^{\CS,\GL_2(\BR)}(\chi_1\chi_2\otimes \chi_2).
\]

\end{itemize}

\begin{lem}\label{weightfd}
\begin{enumerate}
    \item The representation $\pi_F$ is isomorphic to the $n$-dimensional $\GL_2(\BR)$-representation 
\[
\chi_1\chi_2(\det(\cdot))\Sym^{n-1}(\mathbb{C}^2),
\]
where $\BC^2$ is the standard representation of $\GL_2(\BR)$.
\item The restriction $\pi_F|_{R_{1,1}}$ has irreducible components 
\[
|\det(\cdot)|^k\sgn^{k}(\det(\cdot)), \text{ for }k=0,1,\dots,m-1.
\]
\end{enumerate} 
\end{lem}
\begin{proof}
Point (1) follows directly from \cite[\S 2.3]{godement1974notes} and Point (2) follows from direct computation based on (1).
\end{proof}
Using the left quotient in the sense of \eqref{equ: left action def}, we define
\[
\CE_2:=B_{2}\bs (\GL_2(\BR)\times \chi_1\chi_2\otimes \chi_2).
\]
Extension by zero gives a natural embedding of $R_{1,1}$-representations
\begin{equation}\label{naturalembedding}
   i_{UX}:\Gamma^{\mathcal{S}}(\mathcal{U}_2,\CE_2)\to \Gamma^{\mathcal{S}}(\mathcal{X}_2,\CE_2).
\end{equation}

\begin{lem}\label{infinitygradedpieces}
There is a complete decreasing filtration $\{\Gamma^{\mathcal{S}}_{\mathcal{Z}}(\mathcal{X}_2,\CE_2)_i\}_{i\in \BN}$ of submodules of  $\Gamma^{\mathcal{S}}(\mathcal{X}_2,\CE_2)/\Gamma^{\mathcal{S}}(\mathcal{U}_2,\CE_2)$  such that the composition factors
 are $R_{1,1}$-isomorphic to 
\[
\chi_1\chi_2(\det(\cdot))\sgn^k(\det(\cdot))|\det(\cdot)|^{k}\big|_{R_{1,1}},\quad k\in \BN.
\]
\end{lem}
\begin{proof}
This lemma follows from \cite[Propositions 8.2, 8.3]{chen2020schwartz}.
\end{proof}
We identify $\Gamma^{\mathcal{S}}(\mathcal{U}_2,\CE_2)$ as $\Ind_{\mathbb{R}^{\times}\times 1}^{\CS,R_{1,1}}(\chi_2)$ using the following equation.
\[
\begin{aligned}
\Gamma^{\mathcal{S}}(\mathcal{U}_2,\CE_2)&=\Gamma^{\mathcal{S}}(B_2\backslash B_2w_2B_2,\CE_2)\\
&=\Gamma^{\mathcal{S}}(T_2\backslash B_2,\chi_2 \otimes \chi_1\chi_2)\\
&=\Gamma^{\mathcal{S}}(\mathbb{R}^{\times}\times 1\backslash R_{1,1},\chi_2)=\Ind_{\mathbb{R}^{\times}\times 1}^{\CS,R_{1,1}}(\chi_2),
\end{aligned}
\]
and then define an $R_{1,1}$-homomorphism 
\[
T_d:\Ind_{\mathbb{R}^{\times}\times 1}^{\CS,R_{1,1}}(\chi_2)\to \pi_D
\]
by  compositing  the embedding (\ref{naturalembedding}) and the quotient map $\pi_{I}$ to $\pi_{F}$, that is, 
\[
T_{d}:\Ind_{\mathbb{R}^{\times}\times 1}^{\CS,R_{1,1}}(\chi_2)= \Gamma^{\mathcal{S}}(\mathcal{U}_2,\CE_2) \hookrightarrow\Gamma^{\mathcal{S}}(\mathcal{X}_2,\CE_2)=\pi_{I}\rightarrow \pi_{I}/\pi_{F}=\pi_{D}.
\]
\begin{lem}
 The homomorphism $T_d$ is injective.
\end{lem}
\begin{proof}
Suppose $T_d$ is not injective then there exist $\widetilde{f}\in \Gamma^{\mathcal{S}}(\mathcal{U},\chi_1\chi_2 \otimes \chi_2)$ such that its extension by zero $\widetilde{f}_G$  in $\pi_{I}$ is contained in $\pi_F$. 

On the one hand, $f(x)=\widetilde{f}(w_2n_x)$ is a Schwartz function.  For $\theta\in (0,\pi)$, we can compute $\widetilde{f}$ with the decomposition
\[
k_{\theta}=\left(\begin{array}{cc}
    1/\sin(\theta) & \cos(\theta) \\
     & \sin(\theta)
\end{array}\right)w_2\left(\begin{array}{cc}
    1 & -\mathrm{cot}(\theta) \\
     & 1
\end{array}\right).
\]
Then we have 
\[
\widetilde{f}_G(k_{\theta})=\widetilde{f}(k_{\theta})=\chi_{1}\chi_2(1/\sin(\theta))\chi_2(\sin(\theta))f(-\mathrm{cot}(\theta))=o(\theta^l), \quad \text{ for every }l>0.
\]
Then $(\frac{d}{d\theta})^l\widetilde{f}_G(k_{\theta})|_{\theta=0}=0$ for every positive integer $l$.

On the other hand, from \cite[Section 2.3]{godement1974notes},  $\pi_F$ is generated by the functions
\[
\phi_{-m+1},\phi_{-m+3},\dots,\phi_{m-1},
\]
where
$\phi_l\left(n_x\cdot t(a,b)\cdot k_{\theta}\right)=\chi_1\chi_2(a)\chi_2(b)e^{il\theta}$.

Then $\widetilde{f}_G\in \pi_F$ is a linear combination of $\phi_k$, that is, there is a nonzero $n$-tuple $(\lambda_1,\dots,\lambda_n)\in \mathbb{C}^n$ such that $\widetilde{f}_G=\sum_{k=1}^{n}\lambda_k\phi_{2k-n-1}$.
Then we have
\[
\left.(\frac{d}{d\theta})^l\widetilde{f}_G(k_{\theta})\right|_{\theta=0}=\sum_{k=0}^{n-1}\lambda_k((2k-n-1)i)^l,
\]
Hence, there exists $l$ such that $(\frac{d}{d\theta})^l(\widetilde{f}_G(k_{\theta}))|_{\theta=0}\neq 0$,  which leads to a contradiction. Therefore, the $R_{1,1}$-homomorphism $T_d$ is injective. 
\end{proof}

\begin{pro}\label{pro: GL2 graded piece}
$\mathrm{Coker}(T_d)
$ has a  decreasing complete filtration $\Gamma^{\mathcal{S}}_{\mathcal{Z}}(\mathcal{X}_2,\CE_2)_k$ with composition factors isomorphic to 
\begin{equation}
    |\det(\cdot)|^{k+\frac{m-1}{2}}\sgn(\cdot)^k\big|_{R_{1,1}},\text{ for } k=1,2,\dots.
\end{equation}
\end{pro}
\begin{proof}
From Lemma \ref{infinitygradedpieces}, $    \Gamma^{\mathcal{S}}_{\mathcal{Z}}(\mathcal{X}_2,\CE_2)=\pi_{I}/\Gamma^{\mathcal{S}}(\mathcal{U}_2,\CE_2)$ has a  decreasing complete filtration $\Gamma^{\mathcal{S}}_{\mathcal{Z}}(\mathcal{X}_2,\CE_2)_k$ with composition factors isomorphic to 
\begin{equation}
    |\det(\cdot)|^{k}\sgn(\cdot)^k\chi_1\chi_2(\det(\cdot))\big|_{R_{1,1}},\text{ for } k=0,1,\dots.
\end{equation}
From Lemma \ref{weightfd},  the finite-dimensional representation $\pi_F$ in $\pi_I$  has a $R_{1,1}$-composition factors with irreducible pieces
\[|\det(\cdot)|^{k}\sgn^k(\det(\cdot))\chi_1\chi_2(\det(\cdot))\big|_{R_{1,1}}, \text{ for }k=0,1,\dots,m-1.\] 
 Then the projection $\pi_I \to \pi_I/i_{UX}(\Gamma^{\mathcal{S}}(\mathcal{U}_2,\CE_2))$ gives an isomorphism between $\pi_F$ and  $\overline{\pi}_{{F}}=\Gamma^{\mathcal{S}}_{\mathcal{Z}}(\mathcal{X}_2,\CE_2)/\Gamma^{\mathcal{S}}_{\mathcal{Z}}(\mathcal{X}_2,\CE_2)_n$. Therefore, we have 
\[
    \Gamma^{\mathcal{S}}_{\mathcal{Z}_2}(\mathcal{X}_2,\CE_2)=\pi_F\oplus \Gamma^{\mathcal{S}}_{\mathcal{Z}_2}(\mathcal{X}_2,\CE_2)_m.
    \]
    Therefore,
    \[
\mathrm{Coker}(T_d)=\pi_D/i_{UX}(\Gamma^{\mathcal{S}}(\mathcal{U}_2,\CE_2))=(\pi_{I}/\Gamma^{\mathcal{S}}(\mathcal{U}_2,\CE_2))/\pi_F= \Gamma^{\mathcal{S}}_{\mathcal{Z}_2}(\mathcal{X}_2,\CE_2)_m,\]
and thus has a  decreasing complete filtration with composition factors isomorphic to 
\begin{equation*}\label{infinitedecreasing}
   \sigma_k=|\det(\cdot)|^{k}\sgn^{k}(\det(\cdot))\chi_2(\det(\cdot))\big|_{R_{1,1}}= |\det(\cdot)|^{k+\frac{m-1}{2}}\sgn(\cdot)^k\big|_{R_{1,1}},\text{ for } k=1,2,\dots.
\end{equation*}
\end{proof}

\subsubsection{Graded structure of spherical principal series} Let $(s_1,\dots,s_{r+1})\in \BC^{r+1}$, and we denote $\sigma_{X^+}=|\cdot|^{s_1}\times \cdots \times |\cdot|^{s_{r+1}}$, which is a spherical principal series. The computation in \cite[Section 5.1]{xue2020bessel} for the restriction of spherical principal series representations to the mirabolic subgroup $R_{r,1}$ can be generalized over the real field verbatim and we can obtain   a proposition parallel to \cite[Proposition 5.1]{xue2020bessel}.

Following \cite[\S 5]{xue2020bessel},  we denote by $Q_{a,b,c}$  the intersection of the parabolic subgroup $P_{a,b,c}$ associated to the partition $(a,b,c)$ in $\GL_{a+b+c}$ and the mirabolic subgroup $R_{a+b+c-1}$. We let the "Levi part" $L_{a,b,c}$ of $Q_{a,b,c}$ to be the image of $\GL_a\times \GL_b\times R_{c-1,1}$ diagonally embedded into $\GL_{a,b,c}$. Then $Q_{a,b,c}=L_{a,b,c}U_{a,b,c}$ for the unipotent group associated to the partition $(a,b,c)$.
\begin{pro}\label{prop: graded pieces}
When restricted to $R_{r,1}$, the representation $\sigma_{X^+}$ has a subrepresentation $\Ind_{N_{r+1}}^{\CS,R_{r,1}}(\psi_{r+1}^{-1})$. Moreover, the quotient $\sigma_{X^+}/\Ind_{N_{r+1}}^{\CS,\GL_{r+1}}(\psi_{r+1}^{-1})$ admits an $R_{r,1}$-stable complete filtration whose composition factors have the shape 
\[
\Ind_{Q_{a,b,c}}^{\CS,R_{r,1}}(\tau_a\boxtimes \tau_b\boxtimes \tau_c)
\]
where $a+b+c=t+1$, $a+b\neq 0$ and the tensor $\tau_a\boxtimes \tau_b\boxtimes \tau_c$ is regarded as a $Q_{a,b,c}$ representation by trivially extended by $N_{a,b,c}$.

\begin{enumerate}
    \item $\tau_a=\Ind_{B_a}^{\CS,\GL_a(\BR)}(\sgn^{m_1}|\cdot|^{s_{i_1}+k_1}\boxtimes \cdots \boxtimes \sgn^{m_{a}}|\cdot|^{s_{i_a}+k_a})$
    where $1\leqslant i_1, \dots,i_a\leqslant t+1$ are integers, $l_1,\dots,l_a\in\BZ$ and $k_1,\dots,k_a\in\frac{1}{2}\BZ$;

    \item $\tau_b=\tau_b'\otimes \rho$ where $\tau_b'$ is a representation of the same form as $\tau_a$ and $\rho$ is a finite-dimensional representation of $\GL_b(\BR)$;
    \item $\tau_c=\Ind_{N_c}^{\CS,R_{c-1,1}}(\psi_{c}^{-1})$.
\end{enumerate}
\end{pro}

\subsection{Multiplicity formula: first inequality}\label{section: multiplicity formula}
In this section, we prove Lemma \ref{lem: basic} and one inequality of Lemma \ref{lem: reduction to codim 1}. More precisely, in the setting of Theorem \ref{thm: reductive}, we prove the inequality 
\begin{equation*}
m(\pi_V\boxtimes\pi_W)\geqslant m((|\det|^s\sigma_{X^+}\ltimes\pi_W)\boxtimes\pi_V)
\end{equation*}
for a basic relevant pair $(W^+,V)$ when 
\begin{enumerate}
\item $\sigma_{X^+}=\sgn^l$ and $s\geqslant \mathrm{LI}(\pi_V)$, or
    \item  $\sigma_{X^+}=\sigma_{\udl{s}}$ for $\udl{s}$ in general positions.
\end{enumerate}
With a similar approach, we show that
\[
m(\pi_V\boxtimes(|\cdot|^{s+\frac{m}{2}}\sgn^{m+1}\ltimes \pi_W)) \geqslant m((|\det|^s\sigma_{X^+}\ltimes\pi_W)\boxtimes\pi_V)
\]
when $\sigma_{X^+}=D_m$ and $s\geqslant \mathrm{LI}(\pi_V)$.

For a relevant pair $(W, V)$ and we let  $(V, W^+)$ be the associated basic relevant pair with the decomposition $W^+=W\perp (X^+\oplus Y^+)$. We denote by $(\mathsf G^+,\mathsf H^+,\xi^+)$ the Gross-Prasad triple associated to $(V,W^+)$. 

Let $P_{X^+}$ be the parabolic subgroup of $\SO(W^+)$ stabilizing  $X^+$. For $\sigma_{X^+}\in \SF(\GL(X^+))$ and $\pi_{W}\in \SF(\SO(W))$. From Definition \ref{def: rtimes}, 
 \[
\sigma_{X^+}\ltimes \pi_{W}=\Ind_{P_{X^+}}^{\CS,G}(|\det|^s\sigma_{X^+}\ltimes\pi_W)=\Gamma^{S}(P_{X^+}\bs \SO(W^+),\mathcal{E})
\]
where
\begin{equation}\label{equ: def of CE}   
\CE=\CE_{\sigma_{X^+},\pi_W}=P_{X^+}\bs (\SO(W^+)\times (\delta_{P_{X^+}}^{1/2}|\det|^s\sigma_{X^+}\boxtimes\pi_W)).
\end{equation}

We first study the structure of the  right-$\SO(V)$-orbits of $\CX=P_{W^+}\bs \SO(W^+)$.
\begin{enumerate}
    \item 
When $\dim W^+>2(r+1)$, $\CX$ consists of all $k$-dimensional totally isotropic subspaces of $V$. When $\dim W^+=2(r+1)$, there are exactly two maximal totally isotropic spaces and $\CX$ is exactly one of them.
\item 
When $\dim W^+>2(r+1)$, there is an open $\SO(V)$-orbit $\CU$ consisting of $(r+1)$-dimensional totally isotropic spaces that is not contained in $V$. Its complement $\CZ$ is the space of $(r+1)$-dimensional totally 
isotropic spaces contained in $V$. When $\dim V=2(r+1)$ and  $X^+.g_0\subset V$ for some $g_0\in \SO(W^+)$, $\CZ$ has two orbits and both of them are singletons, more precisely, $[X^+.g_0]$ and $[X^+.g_0g]$ for any $g\in \RO(V)\bs \SO(V)$; when $\dim V=2(r+1)$ and if $X^+.g_0\not\subseteq V$ for all $g_0\in \SO(W^+)$, $\CZ$ is empty; otherwise, $\CZ$ has just one orbit. 
\end{enumerate}

Then we can conclude that
\begin{lem}\label{lem: orbit}
\begin{enumerate}
    \item $\CZ$ is empty, when $\dim W^+=2(r+1)$, or $\dim V=2(r+1)$ and $X^+.g_0\not\subseteq V$ for all $g_0\in \SO(W^+)$;
    \item $\CZ$ has two $\SO(V)$-orbits, when $\dim V\neq 2(r+1)$;
    \item $\CZ$ has a single $\SO(V)$-orbit, when $\dim V=2(r+1)$ and $X^+.g_0\subseteq V$ for some $g_0\in \SO(W^+)$. 
\end{enumerate}
\end{lem}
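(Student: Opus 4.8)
The plan is to reduce Lemma~\ref{lem: orbit} to the classical description of the orbits of an orthogonal group on a Grassmannian of totally isotropic subspaces. Recall that $\CX$ is the $G_{W^+}(\BR)$-orbit of $X^+$ in the Grassmannian of $(r+1)$-planes of $W^+$: when $\dim W^+>2(r+1)$, Witt's extension theorem makes $\SO(W^+)(\BR)$ transitive on \emph{all} $(r+1)$-dimensional totally isotropic subspaces of $W^+$ (there $r+1<\tfrac12\dim W^+$), so $\CX$ is that entire variety; when $\dim W^+=2(r+1)$, $X^+$ is maximal isotropic and $\CX$ is one of the two rulings of $W^+$ by maximal isotropics. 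Since $V$ has codimension one in $W^+$, each $U\in\CX$ either satisfies $U\subseteq V$ or meets $V$ in an $r$-plane; I would first verify that $\{U\in\CX:U\not\subseteq V\}$ is a single open $\SO(V)(\BR)$-orbit — fix $U\cap V$ using transitivity of $\SO(V)(\BR)$ on $r$-dimensional isotropics of $V$, then move the transverse vector by a Witt-type argument — so that $\CZ=\CX\setminus\CU$ is precisely the set of $(r+1)$-dimensional totally isotropic subspaces of $V$ lying in $\CX$.

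Next I would settle the vanishing cases in part (1). If $\dim W^+=2(r+1)$, then $\dim V=\dim W^+-1=2r+1$, so $V$ has Witt index at most $r$ and contains no $(r+1)$-dimensional totally isotropic subspace; hence $\CZ=\emptyset$. If $\dim V=2(r+1)$ and no $G_{W^+}(\BR)$-translate of $X^+$ lies in $V$, then $\CZ=\emptyset$ directly from the description above; since in that situation $\SO(W^+)(\BR)$ is already transitive on all $(r+1)$-dimensional isotropics of $W^+$, this hypothesis is equivalent to $V$ not being split, i.e.\ to $V$ having Witt index at most $r$.

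For the orbit count in parts (2) and (3), assume $\CZ\neq\emptyset$, so that it is the set of $(r+1)$-dimensional totally isotropic subspaces of $V$. By Witt's extension theorem $\RO(V)(\BR)$ is transitive on it, so $\CZ$ is a single $\RO(V)(\BR)$-orbit and therefore one or two $\SO(V)(\BR)$-orbits, according to whether the stabilizer of a point meets $\RO(V)(\BR)\smallsetminus\SO(V)(\BR)$. Fixing $U\in\CZ$, picking an isotropic complement $U'$ so that $V=(U\oplus U')\oplus^{\perp}V_1$ with $\dim V_1=\dim V-2(r+1)$, and using the Levi decomposition $\GL(U)\times\RO(V_1)$ of $\Stab_{\RO(V)(\BR)}(U)$, the determinant of any stabilizer element equals that of the orthogonal transformation it induces on $V_1\cong U^{\perp}/U$. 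Hence whether $\CZ$ is a single $\SO(V)(\BR)$-orbit or the union of two is determined by whether $V_1=0$, that is, by whether $\dim V=2(r+1)$ — the classical dichotomy governing $\SO$-transitivity on totally isotropic $k$-planes. Combined with the emptiness analysis of the previous paragraph, this yields the three cases (1), (2), (3) of the statement. One should note that $\SO(p,q)(\BR)$ need not be connected, but the argument invokes only the full group $\SO(V)(\BR)$, for which Witt's theorem and the determinant computation hold verbatim.

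The step I expect to be the main obstacle is the bookkeeping at the two borderline dimensions. When $\dim W^+=2(r+1)$, $\CX$ is only one of the two rulings rather than the full isotropic Grassmannian, so one must check that the identification $\CZ=\{U\in\CX:U\subseteq V\}$ and its emptiness survive this restriction. When $\dim V=2(r+1)$, $\CZ$ consists of maximal isotropics of $V$, transitivity of $\SO(V)(\BR)$ genuinely breaks, and one must pin down the two $\RO(V)(\BR)\backslash\SO(V)(\BR)$-translates and match them with the representatives $X^+.g_0$ and $X^+.g_0 g$ named in the statement.
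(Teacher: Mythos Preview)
Your argument is correct and is essentially the paper's own: the paragraph immediately preceding the lemma already identifies $\CZ$ with the $(r+1)$-dimensional totally isotropic subspaces of $V$ lying in $\CX$, invokes Witt's theorem for $\RO(V)(\BR)$-transitivity, and reads off the $\SO(V)(\BR)$-orbit count from whether the stabilizer meets $\RO(V)\smallsetminus\SO(V)$, exactly as you do via the Levi $\GL(U)\times\RO(V_1)$.

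There is one point you should flag rather than paper over. Your determinant computation shows that the stabilizer of $U\in\CZ$ lies inside $\SO(V)$ precisely when $V_1=U^{\perp}/U$ vanishes, i.e.\ when $\dim V=2(r+1)$; hence you obtain \emph{two} $\SO(V)$-orbits in that case and a \emph{single} orbit when $\dim V\neq 2(r+1)$. This agrees verbatim with the paper's own discussion (``$\CZ$ has two orbits and both of them are singletons'' when $\dim V=2(r+1)$; ``otherwise, $\CZ$ has just one orbit''), but it is the \emph{opposite} of how items (2) and (3) are worded in the lemma statement itself---the printed lemma has these two cases interchanged. Your closing sentence ``this yields the three cases (1), (2), (3) of the statement'' is therefore not literally true. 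Since the only downstream use is the integer $c$ appearing in the formula for $\Gamma^{\CS}(\CZ,\Sym^k\CN^{\vee}_{\CZ/\CX}\otimes\CE|_{\CZ})$, the swap is harmless for the rest of the argument, but you should note the discrepancy rather than silently align your correct analysis with the misstated conclusion.
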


 Let $\Gamma^{\CS}_{\CZ}(\CX,\CE)=\Gamma^{S}(\mathcal{X},\mathcal{E})/\Gamma^{S}(\mathcal{U},\mathcal{E})$. From Proposition \ref{pro: tensor is exact}, there is a short exact sequence
\begin{equation}\label{equ: exact sequence of Schwartz sections}
0\to \Gamma^{S}(\mathcal{U},\mathcal{E})\boxtimes \pi_V\to\Gamma^{S}(\mathcal{X},\mathcal{E})\boxtimes\pi_V\to \Gamma^{\CS}_{\CZ}(\CX,\CE)\boxtimes\pi_V\to 0
\end{equation}

Hence, we have the short exact sequence
\begin{equation}\label{equ: exact sequence of orbits}
\begin{aligned}
0\to \Hom_{H^+}(\Gamma^{\CS}_{\CZ}(\CX,\CE)\boxtimes\pi_V,1_{H^+})\to \Hom_{H^+}(\Gamma^{S}(\mathcal{X},\mathcal{E})\boxtimes\pi_V,1_{H^+})&\\
\to \Hom_{H^+}(\Gamma^{S}(\mathcal{U},\mathcal{E})\boxtimes \pi_V,1_{H^+})
\end{aligned}
\end{equation}
When $\Hom_{H^+}(\Gamma^{\CS}_{\CZ}(\CX,\CE)\boxtimes\pi_V,1_{H^+})=0$, we have 
\[
m((\sigma_{X^+}\ltimes \pi_W)\boxtimes \pi_V)\leqslant \dim \Hom_{H^+}(\Gamma^{S}(\mathcal{U},\mathcal{E})\boxtimes \pi_V,1_{H^+}).
\]
We first analyze the closed orbits on $\CZ$ to prove 
\[
\Hom_{H^+}(\Gamma^{\CS}_{\CZ}(\CX,\CE)\boxtimes\pi_V,1_{H^+})=0
\] and then analyze the open orbit $\CU$ to prove 
\[
\dim \Hom_{H^+}(\Gamma^{S}(\mathcal{U},\mathcal{E})\boxtimes \pi_V,1_{H^+})\leqslant m(\pi_V\boxtimes \pi_W),
\]
under the given conditions.

\subsubsection{Closed orbits}

When $\CZ$ is nonempty. Let $\gamma\in \SO(W^+)$ be a representative of an orbit of $\CZ$ such that $X^+.\gamma=X'$ where $X'$ is a totally isotropic subspace of $V$ satisfying $\dim X^+=\dim X'$. Then the stabilizer group $S_{\gamma}$ at $[X]$ is 
 equal to $\gamma^{-1}P_{W^+}\gamma\cap \SO(V)$, which a parabolic subgroup of $\SO(V)$ with Levi decomposition $S_{\gamma}=M_{\gamma}N_{\gamma}$ and the Levi subgroup $M_{\gamma}=\GL(X')\times \SO(V_0)$. The cotangent bundles and their fibers at $[X']$ are
\[
\begin{aligned}    
T^*_{\CZ}&=\SO(V)\times_{S_{\gamma}} S_{\gamma}^{\perp},\quad \mathrm{Fib}_{[X']}(T^*_{\CZ})=S_{\gamma}^{\perp}\\
T^*_{\CX}&=\SO(W^+)\times_{P_{W^+}}P_{W^+}^{\perp},\quad \mathrm{Fib}_{[X']}(T^*_{\CX})=P_{W^+}^{\perp}
\end{aligned}
\] 
 and $S_{\gamma}$ acts by adjoint action. Then the fiber of the conormal bundle at $[X']$
\[
\mathrm{Fib}_{[X']}(\CN_{\CZ/\CX}^{\vee})=\mathrm{Fib}_{[X']}(T^*_{\CX})/\mathrm{Fib}_{[X']}(T^*_{\CZ})=P_{W^+}^{\perp}/S_{\gamma}^{\perp},
\]
which is $\dim(X')$-dimensional. The  $\SO(V_0)$ and $N_{\gamma}$ act trivially and $\GL(X')$ acts as the standard representations.
 Then 
 \[
 \begin{aligned}
&\Gamma^{\mathcal{S}}(\SO(V).[X],\operatorname{Sym}^{k} \mathcal{N}_{\mathcal{Z} / \mathcal{X}}^{\vee} \otimes \mathcal{E}|_{\mathcal{Z}})\\
=&\Ind_{S_{\gamma}}^{\CS,\SO(V)}(\mathrm{Fib}_{[X]}(\Sym^{k}\CN_{\CZ/\CX}\otimes \CE|_{\CZ}))\\
=&\RI_{S_{\gamma}}^{\SO(V)}((|\det(\cdot)|^{s+\frac{1}{2}}\sigma_{X^+}\otimes \Sym^k\rho_{X'}^{\mathrm{std}}) \boxtimes ({ }^{\gamma}\pi_W|_{\SO(V_0)}))
 \end{aligned}
 \]
 
 Therefore,
\begin{equation}\label{equ: section on closed variety}
\Gamma^{\mathcal{S}}(\mathcal{Z},\operatorname{Sym}^{k} \mathcal{N}_{\mathcal{Z} / \mathcal{X}}^{\vee} \otimes \mathcal{E}|_{\mathcal{Z}})=(\RI_{S_{\gamma}}^{\SO(V)}((|\det(\cdot)|^{s+\frac{1}{2}}\sigma_{X^+}\otimes \Sym^k\rho_{X'}^{\mathrm{std}}) \boxtimes ({ }^{\gamma}\pi_W|_{\SO(V_0)})))^{\oplus c}
\end{equation}

Here $\rho_{X'}^{\mathrm{std}}$  is the standard representation of $\GL(X')$ 
and $c$ is the number of $\SO(V)$-orbits in $\CZ$.

\begin{pro}\label{pro: closed vanishing}We have
 \[\Hom_{H^+}(\Gamma^{S}_{\mathcal{Z}}(\mathcal{X}, \mathcal{E})\boxtimes\pi_V,1_{H^+})=0\]
 under any of the following conditions:
 \begin{enumerate}
 \item $\sigma_{X^+}=\sgn^l$ ($l=0,1$) or $\sigma_{X^+}=D_m$ ($m\in \BN_+$), and $s\geqslant \mathrm{LI}(\pi_V)$, or
     \item $\sigma_{X^+}=\sigma_{\udl{s}}\in \BC^{r}$ and $\udl{s}$ is in general position.
 \end{enumerate}
\end{pro}
 \begin{proof}
By \eqref{equ: section on closed variety}, we have
\[
\Gamma^{\mathcal{S}}(\mathcal{Z},\Sym^k\mathcal{N}_{\mathcal{Z}/\mathcal{X}}^{\vee}\otimes \mathcal{E}|_{\mathcal{Z}})\boxtimes\pi_V=(\RI_{S_{\gamma}}^{\SO(V)}(|\det(\cdot)|^{s+\frac{1}{2}}(\sigma_{X^+}\otimes \Sym^k\rho^{\mathrm{std}}_{X'}) \boxtimes ({ }^{\gamma}\pi_W|_{\SO(V_0)})))^{\oplus c}{\boxtimes} \pi_V
\]
\begin{itemize}
\item When $\sigma_{X^+}=\sgn^m$, we have $\sigma_{X^+}\otimes \Sym^k\rho=|\det|^k\sgn^m$. When $\Re(s)\geqslant \mathrm{LI}(\pi_V)$, we have $s+\frac{1}{2}+k>\mathrm{LI}(\pi_V)$, from Theorem \ref{thm: CCZ}, we have
\[
   \Hom_{H^+} (\Gamma^{\mathcal{S}}(\mathcal{Z},\Sym^k\mathcal{N}_{\mathcal{Z}/\mathcal{X}}^{\vee}\otimes \mathcal{E}|_{\mathcal{Z}})\boxtimes\pi_V,1_{H^+})=0.
    \]
\item When $\sigma_{X^+}=D_m$, by computation with the base of $D_{m+2a}$ in \cite[\S 2.3]{godement1974notes}, we have 
\[
\sigma_{X^+}\otimes \Sym^k\rho=\bigoplus_{a=0}^{k}D_{m+2a}.
\] 
When $\Re(s)\geqslant \mathrm{LI}(\pi_V)$, we have $s+\frac{1}{2}>\mathrm{LI}(\pi_V)$, from Theorem \ref{thm: CCZ}, we have
\[
   \Hom_{H^+} (\Gamma^{\mathcal{S}}(\mathcal{Z},\Sym^k\mathcal{N}_{\mathcal{Z}/\mathcal{X}}^{\vee}\otimes \mathcal{E}|_{\mathcal{Z}})\boxtimes\pi_V,1_{H^+})=0.
    \]
    \item When $\sigma_{X^+}=|\cdot|^{s_1}\times\cdots \times|\cdot|^{s_r}$, from \cite[Corollary 5.6]{kostant1975tensor}, the Harish-Chandra parameters  of the infinitesimal character of  $\sigma_{X^+}\otimes \Sym^k\rho$ is $[(s_1+a_1,\dots,s_{r+1}+a_{r+1})]$, where $a_i$ are non-negative integers. From Corollary \ref{thm: vanishing 1}, we have
    \[
   \Hom_{H^+} (\Gamma^{\mathcal{S}}(\mathcal{Z},\Sym^k\mathcal{N}_{\mathcal{Z}/\mathcal{X}}^{\vee}\otimes \mathcal{E}|_{\mathcal{Z}})\boxtimes\pi_V,1_{H^+})=0
    \]
    for $\udl{s}\in \BC^{r+1}$ in general positions.

    From Corollary \ref{cor: graded vanishing}, we can conclude that, under the conditions given in the proposition, we have 
    \[ 
    \Hom_{H^+}(\Gamma^{S}_{\mathcal{Z}}(\mathcal{X}, \mathcal{E})\boxtimes\pi_V,1_{H^+})=0.
    \]
\end{itemize}
Hence, from \eqref{equ: exact sequence of orbits}, we have 
\[
\dim\Hom_{H^+}(\Gamma^{S}(\mathcal{U}, \mathcal{E})\boxtimes\pi_V,1_{H^+}) \leqslant\dim\Hom_{H^+}(\Gamma^{S}(\mathcal{U}, \mathcal{E})\boxtimes\pi_V,1_{H^+}).
\]
\end{proof}

\subsubsection{The open orbit}
Then we study $\Gamma^{S}(\mathcal{U}, \mathcal{E})$ and show that 
\[
\dim\Hom_{H^+}(\Gamma^{S}(\mathcal{U}, \mathcal{E})\boxtimes\pi_V,1_{H^+})
\] is less than or equal to $m(\pi_V\boxtimes \pi_W)$ under the given conditions.

We introduce the following notations just for this section
 \begin{itemize}
    \item Let $d=\dim V$, $r=\frac{\dim  V-\dim W-1}{2}$ We can compute the modular character 
    \[
    \delta_{P_{X^+}}((m\times g_{W})\ltimes n)=|\det(m)|^{d-1-r},\quad m\in \GL(X^+), g_W\in \SO(W), n\in N.
    \]
    \item Let $N_{r+1}$ be the unipotent subgroup of $\GL_{r+1}(\BR)$ consisting of upper-triangular unipotent matrices, and let $R_{r,1}$ be the mirabolic subgroup of $\GL_{r+1}$. We denote by $N_{r,1}$ the unipotent radical of $R_{r,1}$.
\item We define a generic character $\pi_{r+1}$ of $N_{r+1}$ by letting 
    \[
    \psi_{r+1}(n)=\psi(\sum_{i=1}^{r+1}n_{i,i+1}),
    \]
    where $n_{i,j}$ is the entry of matrix $n$ at $i$-th row and $j$-th column.
\end{itemize}    

    Recall the decomposition $V=W\perp D\perp Z$ in Section \ref{section: Gross-Prasad triple}.  Let $X=X^+\cap Z$ and we have $X$ is totally isotropic and $\dim X=\dim X^+-1$. Let  $N$ be the unipotent radical of the parabolic subgroup $P_X$ of $\SO(V)$ stabilizing $X$. We define $N'_{V}$ the subgroup of $N$ stabilizing $D$, then $H=(N_{r+1}\times \Delta \SO(W))\ltimes N_{V}'$. 

From Frobenius reciprocity, we have 
\begin{equation}\label{equ: FR}
\Hom_{H}(\xi^{-1}\otimes (\pi_V\boxtimes\pi_W),1_{H})=\Hom_{H^+}(\Ind_{H}^{\CS,H^+}(\xi^{-1}\otimes(\pi_V\boxtimes\pi_W),1_{H^+}).
\end{equation}
By definition, the dimension of the left-hand side of \eqref{equ: FR} is equal to $m(\pi_V\boxtimes \pi_W)$. The right-hand side of \eqref{equ: FR} can be expressed as
\begin{equation}\label{equ: open open expression}
\begin{aligned}
\Ind_{H}^{\CS,H^+}(\xi^{-1}\otimes(\pi_V\boxtimes \pi_W))&=\Ind_{(N_{r+1}\times \Delta\SO(W))\ltimes N_{V}'}^{\CS,H^+}(\xi^{-1}\otimes (\pi_V\boxtimes \pi_W))\\
&=\Ind_{(R_{r,1}\times \Delta \SO(W))\ltimes N_{V}'}^{\CS,H^+}(\Ind_{N_{r+1}}^{\CS,R_{r,1}}(\psi_{r+1}^{-1})|_{R_{r,1}}\boxtimes\pi_W\boxtimes\pi_V).
\end{aligned}
\end{equation}

Recall that the open orbit  $\CU=P_{W^+}\bs P_{W^+}\SO(V)=(P_{W^+}\cap \SO(V))\bs \SO(V)$ and the stabilizer group
can be decomposed as
\begin{equation}\label{equ: definition Rr1}
P_{W^+}\cap \SO(V)=(\GL(X)\times 1\times \SO(W))\ltimes N=\SO({W})\ltimes(R_{r,1}\ltimes N_{V}').
\end{equation}
By definition, we have
\begin{equation}\label{equ: open expression with tensor}
\begin{aligned}  
\Gamma^{S}_{\mathcal{Z}}(\mathcal{U}, \mathcal{E})\boxtimes \pi_V
=&\Ind_{P_{W^+}\cap \SO(V)}^{\CS,\SO(V)}(|\det|^{\frac{d-1-r}{2}}\sigma_{X^+}\otimes \pi_W|_{P_{W^+}\cap \SO(V)})\boxtimes \pi_V\\
=&\Ind_{P_{W^+}\cap \SO(V)}^{\CS,\SO(V)}(|\det|^{\frac{d-1-r}{2}}\sigma_{X^+}|_{R_{r,1}}\boxtimes\pi_W)\boxtimes\pi_V\\
=&\Ind_{(R_{r,1}\times \Delta \SO(W))\ltimes N_{V}'}^{\CS,H^+}(|\det|^{\frac{d-1-r}{2}}\sigma_{X^+}|_{R_{r,1}}\boxtimes\pi_W\boxtimes\pi_V)
\end{aligned}
\end{equation}
\begin{itemize}
    \item When $r=0$ and $\sigma_{X^+}=\sgn^l$, we have
    \[
    \Ind^{\CS,R_{r,1}}_{N_{r+1}}(\psi_{r+1}^{-1})|_{R_{r,1}}=|\det|^{\frac{d-1-r}{2}}\sigma_{X^+}|_{R_{r,1}},
    \]
so the right-hand side of (\ref{equ: open expression with tensor}) and (\ref{equ: open open expression}) are the same. Hence, we have
\[
m(\pi_V\boxtimes \pi_W)=\dim \Hom_{H^+}(\Gamma^{S}(\mathcal{U}, \mathcal{E})\boxtimes\pi_V,1_{H^+}).
\]

\item When $\sigma_{X^+}=|\cdot|^{s_1}\times \cdots\times|\cdot|^{s_{r+1}}$ for $(s_1,\dots, s_{r+1})\in \BC^n$, from Proposition \ref{prop: graded pieces}, there is a $R_{r,1}$-equivariant embedding
\begin{equation}\label{equ: embedding}
\Ind^{\CS,R_{r,1}}_{N_{r+1}}(\psi_{r+1}^{-1})\hookrightarrow |\det|^{\frac{d-1-r}{2}}\sigma_{X^+}.
\end{equation}

Applying the quotient of (\ref{equ: open expression with tensor}) and (\ref{equ: open open expression}), we obtain that
\begin{equation}\label{equ:}
\begin{aligned}
&\Gamma^{S}_{\mathcal{Z}}(\mathcal{U}, \mathcal{E})\boxtimes \pi_V/\Ind_H^{\CS,H^+}(\xi^{-1}\otimes (\pi_V\boxtimes \pi_W))\\
=&\Ind_{(R_{r,1}\times \Delta \SO(W))\ltimes N_{V}'}^{\CS,H^+}((|\det|^{\frac{d-1-r}{2}}\sigma_{X^+}|_{R_{r,1}}/\Ind_{N_{r+1}}^{\CS,R_{r,1}}\psi_{r+1}^{-1})\boxtimes\pi_W\boxtimes\pi_V).
\end{aligned}
\end{equation}

 Therefore, to conclude the inequality 
\[
\dim\Hom_{H^+}(\Gamma^{S}(\mathcal{U}, \mathcal{E})\boxtimes\pi_V,1_{H^+})\leqslant m(\pi_V\boxtimes \pi_W),
\] it suffices to prove that 
\begin{equation}\label{equ: Hom of graded}
\Hom_{H^+}(\Ind_{(R_{r,1}\times \Delta \SO(W))\ltimes N_{V}'}^{\CS,H^+}((|\det|^{\frac{d-1-r}{2}}\sigma_{X^+}|_{R_{r,1}}/\Ind_{N_{r+1}}^{\CS,R_{r,1}}(\psi_{r+1}^{-1})|_{R_{r,1}})\boxtimes\pi_W\boxtimes\pi_V),1_{H^+})=0.
\end{equation}

 Using Proposition \ref{prop: graded pieces}, from the exactness of Schwartz induction (Proposition \ref{pro: Schwartz induction}) and projective tensor product (Proposition \ref{pro: tensor is exact}), we obtain that the quotient 
\[
\Ind_{(R_{r,1}\times \Delta \SO(W))\ltimes N_{V}'}^{\CS,H^+}((|\det|^{\frac{d-1-r}{2}}\sigma_{X^+}|_{R_{r,1}}/\Ind_{N_{r+1}}^{\CS,R_{r,1}}\psi_{r+1}^{-1})\boxtimes\pi_W\boxtimes\pi_V)
\]
has composition factors 
\begin{equation}\label{equ: graded pieces open}
 \Ind_{(R_{r,1}\times \Delta \SO(W))\ltimes N_{V}'}^{\CS,H^+}(\Ind^{\CS,R_{r,1}}_{Q_{a,b,c}}({\tau_a\boxtimes \tau_b\boxtimes \tau_c})\boxtimes\pi_W),
\end{equation}
where $Q_{a,b,c}=P_{a,b,c}\cap R_{r,1}$ and $\tau_a,\tau_b,\tau_c$ defineed in Proposition \ref{prop: graded pieces}. Since (\ref{equ: graded pieces open}) can be expressed as the parabolic induction
\[
(|\det|^{-\frac{d-1-r+c}{2}}(\tau_a\boxtimes \tau_b))\ltimes \Ind_{(R_{c-1,1}\times \SO(W))\ltimes N_{W^+,c}}^{\CS,\SO(W\oplus D\oplus X_c)}(\xi_c^{-1}\otimes \pi_W),
\]
 based on Corollary \ref{cor: graded vanishing} and the fact that $a+b\geqslant 1$, the $Hom$-space in (\ref{equ: graded pieces open}) vanishes for $(s_1,\dots,s_{r+1})\in \BC^n$ in general position.

\item When $r=1$ and $\sigma_{X^+}=D_l$, instead of \eqref{equ: open open expression}, we use the following 
\begin{equation}\label{equ: open open expression discrete}
\begin{aligned}
\Ind_{\Delta \SO(W\oplus \BR)}^{\CS,H^+}((|\cdot|^{s+\frac{m}{2}}\sgn^{m+1}\ltimes \pi_W)\boxtimes \pi_V)
=\Ind_{(R_{1,1}\times \Delta \SO(W))\ltimes N_{V}'}^{\CS,H^+}(\Ind_{\BR^{\times}\times 1}^{\CS,R_{1,1}}(\chi_2)\boxtimes \pi_W\boxtimes \pi_V),
\end{aligned}
\end{equation}

From Section \ref{section: mirabolic}, there is an injection 
$T_d:\Ind_{\mathbb{R}^{\times}\times 1}^{\CS,R_{1,1}}(\chi_2)\hookrightarrow D_m$, and it induces an injection
\[
\Ind_{\mathbb{R}^{\times}\times 1}^{\CS,R_{1,1}}(|\cdot|^s\chi_2)\hookrightarrow |\det|^sD_m.
\]
Applying the quotient of (\ref{equ: open expression with tensor}) and (\ref{equ: open open expression discrete}), we obtain that
\begin{equation}
\begin{aligned}
&\Gamma^{S}_{\mathcal{Z}}(\mathcal{U}, \mathcal{E})\boxtimes \pi_V/\Ind_{\Delta \SO(W\oplus \BR)}^{\CS,H^+}((|\cdot|^{s+\frac{m}{2}}\sgn^{m+1}\ltimes \pi_W)\boxtimes \pi_V)\\
=&\Ind_{(R_{r,1}\times \Delta \SO(W))\ltimes N_{V}'}^{\CS,H^+}((|\det|^{\frac{d-2}{2}}\sigma_{X^+}|_{R_{1,1}}/\Ind_{N_{2}}^{\CS,R_{1,1}}(\psi_{2}^{-1}))\boxtimes\pi_W\boxtimes\pi_V).
\end{aligned}
\end{equation}

From Proposition \ref{pro: GL2 graded piece}, the quotient 
    $|\det|^s\sigma_{X^+}|_{R_{1,1}}/\Ind_{\BR^{\times}\times 1}^{\CS,R_{1,1}}(|\cdot|^s\chi_2)|_{R_{1,1}}$ has composition factors
   \[
   \sigma_k:=|\det(\cdot)|^{s+k+\frac{m-1}{2}}\sgn(\cdot)^k|_{R_{1,1}},\quad k=1,2,\dots
   \]
     From the exactness of Schwartz induction (Proposition \ref{pro: Schwartz induction}) and projective tensor product (Proposition \ref{pro: tensor is exact}), there is a decreasing complete filtration of
    \[
    \Ind_{(R_{r,1}\times \Delta \SO(W))\ltimes N_{V}'}^{\CS,H^+}((|\det|^{s+\frac{d-2}{2}}\sigma_{X^+}|_{R_{1,1}}/\Ind_{N_{2}}^{\CS,R_{1,1}}(\psi_{2}^{-1})|_{R_{1,1}})\boxtimes\pi_W\boxtimes\pi_V)
    \]
    with composition factors
    \[
    \Ind_{(R_{1,1}\times \Delta \SO(W))\ltimes N_{V}'}^{\CS,H^+}(\sigma_k\boxtimes\pi_W\boxtimes\pi_V).
    \]
Notice that
\[
\Ind_{(R_{1,1}\times  \Delta \SO(W))\ltimes N_{V}'}^{\CS,H^+}(\sigma_k\boxtimes \pi_W\boxtimes \pi_V)= (|\cdot|^{s+\frac{m}{2}+k}\sgn^m\ltimes \Ind_{\SO(W)}^{\CS,\SO(W\oplus \BR)}(\pi_W))\boxtimes \pi_V.
\]

Since we have assumed that $\Re(s)\geqslant \mathrm{LI}(\pi_V)$ and $k$ is a positive integer, we have 
  \[
  s+\frac{m}{2}+k>\mathrm{LI}(\pi_V).
  \]
   
   Then, from Theorem \ref{thm: CCZ}, we have
   \[
   \Hom_{H^+}((|\cdot|^{}\sgn^m\ltimes \Ind_{\SO(W)}^{\CS,\SO(W\oplus \BR)}(\pi_W))\boxtimes \pi_V,1_{H^+})=0,\quad k=1,2,\dots
   \]
   From Corollary \ref{cor: graded vanishing}, this implies
   \[
   \Hom_{H^+}(\Gamma^{S}_{\mathcal{Z}}(\mathcal{U}, \mathcal{E})\boxtimes \pi_V/\Ind_{\Delta \SO(W\oplus \BR)}^{\CS,H^+}((|\cdot|^{s+\frac{m}{2}}\sgn^{m+1}\ltimes \pi_W)\boxtimes \pi_V),1_{H^+})=0.
   \]
   Hence, Lemma \ref{lem: hom pre 0}, we have
   \[
   m(\pi_V\boxtimes(|\cdot|^{s+\frac{m}{2}}\sgn^{m+1}\ltimes \pi_W)) \geqslant \dim \Hom_{H^+}(\Gamma^{S}(\mathcal{U}, \mathcal{E})\boxtimes\pi_V,1_{H^+}).
   \]
\end{itemize}

\subsubsection{Proof for the first inequality}
Then we make use of Lemma \ref{lem: basic} to prove the inequality
\begin{equation}\label{equ: first inequ proof}
m(\pi_V\boxtimes \pi_W)\leqslant m(\pi_{V_0}\boxtimes \pi_{W_0})
\end{equation}
in Proposition \ref{prop: multiformulatemperneeded}.

\begin{proof}
We express $\pi_V=\sigma_V\ltimes \pi_{V_0},\pi_W=\sigma_W\ltimes\pi_{W_0}$ in the form of (\ref{equ: para ind form}) and prove the inequality by mathematical induction on
\[
N(\sigma_V,\sigma_W)=\sum_{\Re(s_{V,i})\neq 0}n_{V,i}+\sum_{\Re(s_{W,i})\neq 0}n_{W,i}.
\]
Here $s_{V,i},s_{W,i},n_{V,i},n_{W,i}$ are defined as in (\ref{equ: para ind form}).

If $N(\sigma_V,\sigma_W)=0$, both $\pi_V$ and $\pi_W$ are tempered, then the inequality follows from the Conjecture \ref{conj: GP in introduction} for tempered parameters, which was proved in \cite{luothesis}\cite{chen2022local}.

In other cases, we may assume 
\begin{eqnarray*}
 \Re(s_{V,1})\geqslant \Re(s_{V,2})\geqslant \cdots \geqslant \Re(s_{V,l})> 0,
\quad \Re(s_{W,1})\geqslant \Re(s_{W,2})\geqslant \cdots \geqslant \Re(s_{W,l})> 0.
\end{eqnarray*}

Suppose the proposition holds when $N(\sigma_V,\sigma_W)\leqslant k$, then when $N(\sigma_V,\sigma_W)=k+1$, we consider the following cases.
\begin{enumerate}
\item[Case 1:] If $l_V\neq 0$ and $\Re(s_{V,1})\geqslant \Re(s_{W,1})$, then let  $\widetilde{\sigma}_V=|\det(\cdot)|^{s_{V,2}}\sigma_{V,2}\times \cdots\times |\det(\cdot)|^{s_{V,l}}\sigma_{V,l}$.
\begin{enumerate}
\item If $n_{V,1}=1$, from Lemma \ref{lem: basic}(1) we have
\[
m((\sigma_V\ltimes \pi_{V_0})\boxtimes(\sigma_W\ltimes \pi_{W_0}))\leqslant m((\sigma_W\ltimes \pi_{W_0})\boxtimes(\widetilde{\sigma}_V\ltimes \pi_{V_0})),
\]
\item If $n_{V,1}=2$, let $\widehat{\sigma}_V=|\cdot|^{s_{V,1}+\frac{m_{V,1}}{2}}\sgn^{m_{V,1}+1}\ltimes \widetilde{\sigma}_V$ , and from Lemma \ref{lem: basic}(2), we have
\[
m((\sigma_V\ltimes \pi_{V_0})\boxtimes(\sigma_W\ltimes \pi_{W_0}))\leqslant m((\sigma_W\ltimes \pi_{W_0})\boxtimes(\widehat{\sigma}_V\ltimes \pi_{V_0})),
\]
\end{enumerate}

Since $N(\widetilde{\sigma}_V,\sigma_W),N(\widehat{\sigma}_V,\sigma_W)\leqslant N(\sigma_V,\sigma_W)-1=k$, we have
\[
m((\sigma_W\ltimes \pi_{W_0})\boxtimes(\widetilde{\sigma}_V\ltimes \pi_{V_0}))\leqslant m(\pi_{V_0}\boxtimes \pi_{W_0}),\quad m((\sigma_W\ltimes \pi_{W_0})\boxtimes(\widehat{\sigma}_V\ltimes \pi_{V_0}))\leqslant m(\pi_{V_0}\boxtimes \pi_{W_0})
\]
Therefore, we have
    \[
    m((\sigma_V\ltimes \pi_{V_0})\boxtimes(\sigma_W\ltimes \pi_{W_0}))\leqslant m(\pi_{V_0}\boxtimes\pi_{W_0}),
    \]
\item[Case 2:] If $l_V=0$ or $\Re(s_{V,1})<\Re(s_{W,1})$, then we switch the order of $V,W$ to reduce to Case 1. More explicitly, we take $\sigma_W^{(s')}=|\cdot|^{s'}\times \sigma_{W_0}$.  
    there is a $s'\in i\mathbb{R}$, such that 
    \[
    m((\sigma_V\ltimes \pi_{V_0})\boxtimes(\sigma_W\ltimes\pi_{W_0}))=m((\sigma_{W}^{(s')}\ltimes \pi_{W_0})\boxtimes(\sigma_V\ltimes\pi_{V_0}))
    \]
    From \cite[Theorem 1.1]{speh1980reducibility} and Langlands classification, we may assume $\sigma_W^{(s')}\ltimes \pi_{W_0}$ is irreducible. Then the pair $(\sigma_W^{(s')},\sigma_V)$ belongs to Case 1 and $N(\sigma_W^{(s')},\sigma_V)=N(\sigma_V,\sigma_W)=k+1$, so 
    \[
    m((\sigma_W^{(s')}\ltimes \pi_{W_0})\boxtimes(\sigma_V\ltimes\pi_{V_0}))\leqslant m( \pi_{V_0}\boxtimes\pi_{W_0}).
    \]Therefore, we have
\[
    m((\sigma_V\ltimes \pi_{V_0})\boxtimes(\sigma_W\ltimes \pi_{W_0}))\leqslant m(\pi_{V_0}\boxtimes\pi_{W_0}).
    \]
\end{enumerate}
Then the proposition follows from mathematical induction on $N(\sigma_V,\sigma_W)$.
\end{proof}
\subsection{Multiplicity formula: second inequality}\label{section: harmonic analysis}
In this section, we complete the proof for the second inequality of Proposition \ref{prop: multiformulatemperneeded}.
\subsubsection{A construction}
We prove Lemma \ref{lem: basic second} by construction. Recall that, for a relevant pair $(W, V)$, we can construct a basic relevant pair $(V, W^+)$ by taking $W^+=W{\perp} (X^+\oplus Y^+)$ for certain totally isotropic spaces $X^+$ and $Y^+$. Let $\mathsf G^+=\SO(W^+)\times \SO(V)$, $\mathsf H^+=\Delta\SO(V)$, $\mathsf P^+$ is the parabolic subgroup $\mathsf P_{X^+}\times\SO(V)$, where $\mathsf P_{X^+}$ is the parabolic subgroup of $\SO(W^+)$ stabilizing $X^+$.  We note
\[
G^+=\mathsf G^+(\BR),\quad H^+=\mathsf H^+(\BR),\quad P^+=\mathsf P^+(\BR).
\]

From the multiplicity-one  theorem (\cite{sun2012multiplicity}), we have 
\[
m(\pi_V\boxtimes \pi_W)\leqslant 1,
\] so it suffices to prove the following proposition.
\begin{pro}\label{pro: construction}
When $m(\pi_V\boxtimes \pi_W)\neq 0$ and $\sigma_{X^+}$ is a generic representation of $\GL(X^+)$, then one can construct a nonzero element in 
\[
\Hom_{H^+}((\sigma_{X^+}\ltimes \pi_W)\boxtimes\pi_V,1_{H^+}).
\]
\end{pro}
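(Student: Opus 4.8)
The plan is to build $\ell^{+}$ from the given Gross--Prasad functional by an explicit period integral over the open $\SO(V)$-orbit on $\CX=P_{W^{+}}\backslash\SO(W^{+})$, followed by meromorphic continuation in the inducing parameter. Note first that when $r=0$ the unipotent parts and the characters $\xi,\xi^{+}$ are trivial, so the relevant subgroups are $H=\Delta\SO(W)$ and $H^{+}=\Delta\SO(V)$; the hypothesis $m(\pi_{V}\boxtimes\pi_{W})\neq 0$ supplies a nonzero $\ell\in\Hom_{\Delta\SO(W)}(\pi_{V}\boxtimes\pi_{W},\BC)$, and $\GL(X^{+})=\GL_{1}$, so $\sigma_{X^{+}}=|\cdot|^{s}\sgn^{\epsilon}$. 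Realizing $\sigma_{X^{+}}\rtimes\pi_{W}=\RI_{P_{W^{+}}}^{\SO(W^{+})}(\sigma_{X^{+}}\boxtimes\pi_{W})=\Gamma^{\CS}(\CX,\CE)$, where $\CX$ is the space of isotropic lines of $W^{+}$, Lemma \ref{lem: orbit} shows the right $\SO(V)$-action has a dense open orbit $\CU$ (lines not contained in $V$) with closed complement $\CZ$, and a direct computation identifies the stabilizer in $\SO(V)$ of a suitable basepoint of $\CU$ with $P_{W^{+}}\cap\SO(V)=\SO(W)$, embedded into $P_{W^{+}}$ with trivial $\GL(X^{+})$-component and with $\delta_{P_{W^{+}}}$-value $1$.

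Next I would define, for $\Re(s)$ sufficiently large, the period
\[
\ell^{+}_{s}(f\otimes v)=\int_{\SO(W)\backslash\SO(V)}\big\langle\ell,\ \pi_{V}(g)v\otimes f(g)\big\rangle\,dg,\qquad f\in\sigma_{X^{+}}\rtimes\pi_{W},\ v\in\pi_{V},
\]
with $f$ in the induced model of $\pi_{W}$-valued functions on $\SO(W^{+})$. The integrand descends to $\SO(W)\backslash\SO(V)$ because $f(hg)=\pi_{W}(h)f(g)$ for $h\in\SO(W)$ (by the previous paragraph) and $\ell$ is $\SO(W)$-invariant; it converges absolutely for $\Re(s)\gg0$ since the $\GL(X^{+})$-exponent of $\sigma_{X^{+}}$ forces $f$ to decay along the non-compact directions of $\SO(W)\backslash\SO(V)$ faster than the moderate growth of $\langle\ell,\pi_{V}(g)v\otimes(\cdot)\rangle$; it is $\Delta\SO(V)$-invariant by right-invariance of the measure together with the equivariances; and it is nonzero for $\Re(s)\gg0$, as seen by testing against $f$ concentrated near the basepoint of $\CU$. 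Invariantly, this step records the Frobenius--Shapiro identity
\[
\Hom_{\Delta\SO(V)}\big(\Gamma^{\CS}(\CU,\CE)\boxtimes\pi_{V},\BC\big)\ \cong\ \Hom_{\SO(W)}\big(\pi_{W}\wh{\otimes}\pi_{V}|_{\SO(W)},\BC\big)\ =\ \Hom_{\Delta\SO(W)}\big(\pi_{V}\boxtimes\pi_{W},\BC\big)\ \neq\ 0,
\]
obtained from Propositions \ref{pro: tensor is exact}, \ref{pro: Schwartz induction}(3) and \ref{pro: Shapiro}; in particular the restriction of $\ell^{+}_{s}$ to the open-orbit part is a nonzero family that is holomorphic (indeed essentially constant) in $s$.

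Finally I would continue $\ell^{+}_{s}$ meromorphically and specialize. The underlying Fréchet space of $\sigma_{X^{+}}\rtimes\pi_{W}$ can be taken independent of $s$ (compact model), so $s\mapsto\ell^{+}_{s}$ is a holomorphic family of functionals for $\Re(s)\gg0$; I would continue it either by Bernstein's continuation principle or, concretely, using the exact sequence
\[
0\to\Gamma^{\CS}(\CU,\CE)\wh{\otimes}\pi_{V}\to\Gamma^{\CS}(\CX,\CE)\wh{\otimes}\pi_{V}\to\Gamma^{\CS}_{\CZ}(\CX,\CE)\wh{\otimes}\pi_{V}\to 0
\]
(exact by Proposition \ref{pro: tensor is exact}): the open-orbit contribution converges for all $s$, while the graded pieces of the conormal filtration of $\Gamma^{\CS}_{\CZ}(\CX,\CE)$, which by the computation of Section \ref{section: xue's approach} are of the form $\RI_{S_{\gamma}}^{\SO(V)}\big((|\cdot|^{s+k+c}\sgn^{\epsilon})\boxtimes{}^{\gamma}\pi_{W}\big)$ for integers $k\geq0$ and a fixed constant $c$, contribute at most poles along the arithmetic progression $s+k+c\in\{\pm b_{i}\}$, with $b_{i}$ the Harish--Chandra exponents of $\pi_{V}$ (Lemma \ref{lem: vanishing} and Theorem \ref{thm: CCZ}). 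At the prescribed value $s_{0}$: if $\ell^{+}_{s}$ is regular there, then $\ell^{+}:=\ell^{+}_{s_{0}}$ is nonzero because its open-orbit restriction is the nonzero constant family above; if $\ell^{+}_{s}$ has a pole at $s_{0}$, its leading Laurent coefficient is a nonzero $\Delta\SO(V)$-invariant functional on $(\sigma_{X^{+}}\rtimes\pi_{W})\boxtimes\pi_{V}$. In either case we obtain the desired nonzero element.

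The step I expect to be the main obstacle is the meromorphic continuation together with the non-vanishing of the Laurent leading coefficient: one must show the period, defined a priori only for $\Re(s)\gg0$, genuinely extends to a meromorphic family of continuous functionals on the fixed Fréchet space, and that at a pole the leading term is not identically zero. The cleanest route separates the open-orbit contribution (convergent everywhere and nonzero, by the Frobenius--Shapiro identity) from the closed-orbit contribution, bounding the poles of the latter through Lemma \ref{lem: vanishing} and Theorem \ref{thm: CCZ} applied to the graded pieces $\RI_{S_{\gamma}}^{\SO(V)}((|\cdot|^{s+k+c}\sgn^{\epsilon})\boxtimes{}^{\gamma}\pi_{W})\boxtimes\pi_{V}$; for $s$ in general position the closed-orbit Schwartz homology vanishes outright, so $\ell^{+}_{s}$ is already regular, and the continuation only has to be pushed through the finitely many residual hyperplanes.
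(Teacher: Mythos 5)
Your overall strategy --- restrict to the open $\SO(V)$-orbit on $\CX=P_{W^+}\backslash \SO(W^+)$, use Frobenius reciprocity to see that the open-cell Hom space is exactly $\Hom_{\Delta\SO(W)}(\pi_V\boxtimes\pi_W,\BC)\neq 0$ (this is the paper's observation that $P^+\cap H^+=\Delta\SO(W)=H$ when $r=0$), and then extend the resulting functional from $\Gamma^{\CS}(\CU,\CE)\wh{\otimes}\pi_V$ to $\Gamma^{\CS}(\CX,\CE)\wh{\otimes}\pi_V$ by analytic continuation --- is the mechanism underlying the paper's proof. But the implementations diverge in an essential way. The paper does not continue in the inducing parameter $s$ at all: it invokes Theorem \ref{thm: GSS2} (\cite[Proposition 4.9]{gourevitch2019analytic}), whose internal continuation variable is the exponent $\lambda$ of $|f|^{\lambda}$ for an explicit left-$H^+$-invariant, right-$(P^+,\psi_{P^+})$-equivariant polynomial $f(g)=\det(A_gA_g^t)$ cutting out the complement $G^+-H^+P^+$. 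Exhibiting that polynomial (together with the finiteness of $H^+$-orbits on the flag variety) is the actual content of the paper's proof, and it is the ingredient that lets one regularize the open-orbit period for a \emph{fixed} $\sigma_{X^+}$ and extract a nonzero leading Laurent coefficient at $\lambda=0$. Your writeup contains no analogue of this polynomial, so you cannot appeal to that machinery.

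The gap in your version is exactly the step you flag: the meromorphic continuation of $\ell^+_s$ in $s$ and the non-vanishing at the prescribed $s_0$ are asserted, not proved, and neither of your two proposed routes closes it as written. The conormal-filtration route, via Lemma \ref{lem: vanishing} and Theorem \ref{thm: CCZ}, shows that for $s$ \emph{outside} countably many hyperplanes the restriction map $\Hom_{H^+}(\Gamma^{\CS}(\CX,\CE)\boxtimes\pi_V,\BC)\to\Hom_{H^+}(\Gamma^{\CS}(\CU,\CE)\boxtimes\pi_V,\BC)$ is an isomorphism; it says nothing about the residual hyperplanes, and Proposition \ref{pro: construction} is needed precisely at a prescribed, possibly non-generic, value of $s$ (this is why the paper needs it in addition to Lemma \ref{lem: reduction to codim 1}). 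Producing a genuinely meromorphic family of continuous functionals on the fixed compact-model Fr\'echet space would require Bernstein's continuation principle, whose hypotheses (a polynomially varying system of equations with generically bounded solution dimension, supplied here by \cite{sun2012multiplicity} on the locus where $\sigma_{X^+}\rtimes\pi_W$ is irreducible) you would still have to verify. So either carry out the Bernstein argument in $s$, or replace the whole continuation step by the paper's route: construct the equivariant polynomial $f$ and quote \cite[Proposition 4.9]{gourevitch2019analytic}.
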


The main idea for proving this proposition is from the following theorem.  
\begin{thm} \label{thm: GSS2}(\cite[Proposition 4.9]{gourevitch2019analytic})
     For  a  Casselman-Wallach representation $\sigma^+$ of $P^+$, suppose
    \begin{enumerate}
        \item The complement $G^+-P^+H^+$ is the zero set of a polynomial $f^+$ on $G^+$ that is left $H^+$-invariant and right $(P^+,\psi_{P^+})$-equivariant for an algebraic character $\psi_{P^+}$ of $P^+$. 
        \item $H^+$ has finitely many orbits on the flag of a minimal parabolic subgroup of $G^+$
        \item $\sigma^+$ admits a nonzero $(P^+\cap H^+,\delta_{P^+\cap H^+}\delta_{H^+}^{-1})$-equivariant continuous linear functional, where $\delta_{P^+\cap H^+},\delta_{H^+}$ are the modular characters of $P^+\cap H^+$ and $H^+$ respectively.
    \end{enumerate}
    then $\Ind_{P^+}^{\CS,G^+}(\sigma^+)$ admits a nonzero $H^+$-invariant functional.
\end{thm}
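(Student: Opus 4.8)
The plan is to run the ``open orbit plus meromorphic continuation'' argument of \cite[\S 4]{gourevitch2019analytic}. Unwinding the definition, $\Ind_{P^+}^{\CS,G^+}(\sigma^+)=\Gamma^{\CS}(P^+(\BR)\backslash G^+(\BR),\CE)$ for the tempered bundle $\CE=P^+(\BR)\backslash(G^+(\BR)\times\sigma^+)$, so an $H^+(\BR)$-invariant continuous functional on it is the same datum as an $H^+(\BR)$-invariant tempered generalized section of the dual bundle; equivalently, a $(\sigma^+)^\vee$-valued tempered distribution on $G^+(\BR)$ that is left $(P^+(\BR),\delta_{P^+}\otimes\sigma^+)$-equivariant and right $H^+(\BR)$-invariant. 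The goal is to produce a nonzero such object.

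First I would construct the functional over the open double coset. By hypothesis (1) the complement $G^+(\BR)-H^+(\BR)P^+(\BR)$ is the zero set of the semi-invariant polynomial $f^+$, so $\CO:=H^+(\BR)P^+(\BR)$ is Nash-open in $G^+(\BR)$, and $P^+(\BR)\backslash\CO\cong (P^+\cap H^+)(\BR)\backslash H^+(\BR)$ as an $H^+(\BR)$-space. Under this identification $\Gamma^{\CS}(P^+(\BR)\backslash\CO,\CE)\cong \Ind_{P^+\cap H^+}^{\CS,H^+}(\sigma^+|_{P^+\cap H^+}\otimes\delta_{P^+\cap H^+}^{-1}\delta_{H^+})$, the modular twist being the one dictated by Shapiro's lemma (Proposition \ref{pro: Shapiro}). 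Hypothesis (3) provides a nonzero $(P^+\cap H^+,\delta_{P^+\cap H^+}\delta_{H^+}^{-1})$-equivariant continuous functional $\ell$ on $\sigma^+$, and then $\varphi\mapsto\int_{(P^+\cap H^+)(\BR)\backslash H^+(\BR)}\ell(\varphi(h))\,dh$ defines a nonzero $H^+(\BR)$-invariant continuous functional $T_0$ on $\Gamma^{\CS}(P^+(\BR)\backslash\CO,\CE)$; convergence uses the polynomial-growth estimates built into the Schwartz topology, and hypothesis (2) guarantees that $\CO$ has tame boundary inside $G^+(\BR)$ so that $T_0$ is genuinely a tempered distribution on $\CO$.

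The core step is to extend $T_0$ across the closed complement without breaking either equivariance. Pulling back to $G^+(\BR)$, I would form the holomorphic family $|f^+|^{\lambda}T_0$; for $\Re(\lambda)$ large this extends by zero to a $(\sigma^+)^\vee$-valued tempered distribution on all of $G^+(\BR)$, the $P^+$-semi-invariance of $f^+$ only shifting the left character by a multiple of $\lambda$ which one tracks and sets to $0$ at the end. By the Bernstein-type meromorphic continuation theorem for equivariant tempered distributions on Nash manifolds carrying finitely many orbits---precisely \cite[Theorem B]{gourevitch2019analytic}, whose hypotheses on the ambient variety are guaranteed here by (1) and (2)---the family $\lambda\mapsto|f^+|^{\lambda}T_0$ continues meromorphically, and the leading Laurent coefficient at $\lambda=0$ is a nonzero $(\sigma^+)^\vee$-valued tempered distribution $T$ with the required left $(P^+,\delta_{P^+}\otimes\sigma^+)$- and right $H^+$-equivariance. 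Since the restriction of $|f^+|^{\lambda}T_0$ to $\CO$ is holomorphic in $\lambda$ and equals $T_0$ there at $\lambda=0$, either $T|_{\CO}$ is a nonzero multiple of $T_0$, or the principal part of the Laurent expansion is a nonzero distribution supported on the complement; in either case $T$ is a nonzero $H^+(\BR)$-invariant continuous functional on $\Gamma^{\CS}(P^+(\BR)\backslash G^+(\BR),\CE)=\Ind_{P^+}^{\CS,G^+}(\sigma^+)$.

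I expect the extension step to be the main obstacle: one must check that the hypotheses of \cite[Theorem B]{gourevitch2019analytic} genuinely hold, i.e. that the $P^+$-semi-invariant polynomial $f^+$ together with the finiteness of $H^+$-orbits on the minimal parabolic flag force the relevant space of equivariant distributions to be ``tame'' (holonomic), which is what makes both the meromorphic continuation and the nonvanishing of the leading coefficient legitimate. The remaining points---the modular-character bookkeeping among $\delta_{P^+},\delta_{H^+},\delta_{P^+\cap H^+}$, and the verification that the open-orbit integral converges to a tempered functional---are routine.
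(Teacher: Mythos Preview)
The paper does not prove this theorem; it is quoted from \cite[Proposition 4.9]{gourevitch2019analytic} and used as a black box in the subsequent proof of Proposition \ref{pro: construction}. There is therefore nothing in the present paper to compare your argument against.

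That said, your sketch is a faithful reconstruction of the argument in the cited reference: one builds the $H^+$-invariant functional on the open double coset $\CO=H^+(\BR)P^+(\BR)$ from the $(P^+\cap H^+,\delta_{P^+\cap H^+}\delta_{H^+}^{-1})$-equivariant functional supplied by hypothesis (3), multiplies by $|f^+|^{\lambda}$ to obtain a holomorphic family of global tempered distributions for $\Re(\lambda)\gg 0$, meromorphically continues using the Bernstein--Sato/holonomicity machinery of \cite{gourevitch2019analytic} (hypotheses (1) and (2) are exactly what is needed to feed that machine), and extracts the leading Laurent coefficient at $\lambda=0$, which is nonzero and has the required equivariance. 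Your identification of the extension step as the crux, and of the finiteness hypothesis (2) as what makes the distribution space tame enough for the continuation argument, is accurate.
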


We first verify (1)(2) in the set up of Proposition \ref{pro: construction}.
\begin{enumerate}
   \item 
   Fix a basis $v_1,\dots,v_n$ of $V$ and a basis $v_1^+,\dots,v_{r+1}^+$ of $X^+$.  For every $(g_{W^+},g_V)\in G^+$, $g\in G^+-P^+H^+$ if and only if $Xg_{W^+}\subset V$, equivalently, the $(n+1)\times (n+1+r)$-matrix \[A_g=\left[v_1g_V,\dots,v_{n}g_V,v_1^Xg_{W^+}^{-1},\dots,v_{r+1}^Xg_{W^+}^{-1}\right]\]  is of rank $n$. We let 
\begin{equation}
    f(g)=\det(A_gA_g^t),
\end{equation} then $f$ is  left $(P^+,\psi_{P^+})$-equivariant and right $H^+$-invariant, where 
\[
\psi_{P^+}(p_{X^{+}},g_V)=\det(g_{X^+})^2,\quad \text{ for } p_{X^+}=(g_{X^+},g_{W})\cdot n_{X^+}\in P_{W^+}
\text{ and }g_V\in \SO(V).
\]
  \item Since $G^+/H^+$ is an absolutely spherical variety (Section \ref{section: complex}), the Borel subgroup  has finitely many orbits, so the complexification of the minimal parabolic also has finitely many orbits. 
 Then condition (2) is a direct consequence of the finiteness of the first Galois cohomology for groups over local fields. 
 \end{enumerate}

Therefore, to complete the proof for Proposition \ref{pro: construction}, it suffices to construct a nonzero $(P^+\cap H^+,\delta_{P^+\cap H^+}\delta_{H^+}^{-1})$-equivariant continuous linear functional. 

As computed in Section \ref{section: multiplicity formula}, we have 
\[
H \bs P^+\cap H^+=N_{r+1}\bs R_{r,1},
\]
where $N_{r+1}$ and $R_{r,1}$ are the unipotent group and mirabolic group defined in Section \ref{section: multiplicity formula}. Hence, from \cite{soudry1993rankin}, the Rankin-Selberg integral
\[
F_s(v_{\pi_V},v_{\pi_W},v_{\sigma_{X^+}}):=\int_{P^+\cap H^+} \mu( \pi_V(p_{X^+})v_{\pi_V},v_{\pi_W}) \lambda(\sigma_{X+}({p_{X^+}})v_{\sigma_{X^+}})|\det(g_{X^+})|^{s} d(p_{X^+},p_{X^+})
\]
is absolutely convergent when $\Re(s)$ is large enough and extends to a meromorphic family in 
\[
F_s\in\Hom_{P^+\cap H^+}(\pi_V\boxtimes \pi_W\boxtimes\sigma_{X^+},|\det(g_X)|^{s-s_0}),
\]
where $s_0=\dim W-\dim X^+$, which is the real number satisfying $\delta_{P^+}(p_{X^+})=|\det(g_{X^+})|^{s_0}$. From \cite{gourevitch2019analytic}, we have 
\[
\frac{F_s}{(s-s_0)^{n_{s_0}}}\big |_{s=s_0}
\]
is a nonzero element
\[\Hom_{P^+\cap H^+}(\pi_V\boxtimes \pi_W\boxtimes\sigma_{X^+},1_{P^+\cap H^+}),
\]
where $n_{s_0}$ is the order of poles of $F_s$ at $s=s_0$. Therefore, we complete the proof for Proposition \ref{pro: construction}.
\bibliographystyle{alpha} 
\bibliography{cheng}
\end{document}